\documentclass{amsart}

\usepackage{array,booktabs,comment,delarray,float,rotating,titlesec}
\usepackage{amsmath,amsfonts,amssymb,amsthm}
\usepackage{bbm,bm,cases,dsfont,extarrows,mathrsfs,mathtools}
\usepackage{graphicx,xcolor}
\usepackage{pb-diagram,tikz,tikz-cd}
\usetikzlibrary{arrows,graphs}
\usepackage[all]{xy}
\usepackage[numbers]{natbib}
\usepackage{hyperref}

\makeatletter
\providecommand\@dotsep{4.5}
\providecommand\numberline[1]{\hb@xt@\@tempdima{#1\hfil}}
\renewcommand*\l@section{\@dottedtocline{1}{1.5em}{3em}}
\renewcommand*\l@subsection{\@dottedtocline{2}{3.0em}{3.8em}}
\makeatother

\usepackage{titlesec}
\titleformat{\section}{\normalfont\bfseries\Large}{\thesection}{1em}{}
\titlespacing*{\section}{0pt}{3.5ex plus 1ex minus .2ex}{2.3ex plus .2ex}
\titleformat{\subsection}{\normalfont\bfseries\large}{\thesubsection}{1em}{}
\titlespacing*{\subsection}{0pt}{2.5ex plus 1ex minus .2ex}{1.5ex plus .2ex}

\newtheorem{theorem}{Theorem}[section]
\newtheorem{lemma}[theorem]{Lemma}
\newtheorem{definition}[theorem]{Definition}
\newtheorem{corollary}[theorem]{Corollary}
\newtheorem{proposition}[theorem]{Proposition}
\newtheorem{example}[theorem]{Example}
\newtheorem{remark}[theorem]{Remark}

\renewcommand{\hom}{\operatorname{Hom}}
\newcommand{\ext}{\operatorname{Ext}}
\renewcommand{\mod}{\bmod}
\newcommand{\sstilt}{\operatorname{s\tau\text{-}tilt}}
\newcommand{\ftors}{\operatorname{f\text{-}tors}}
\newcommand{\tors}{\operatorname{tors}}
\newcommand{\mut}{\operatorname{Mut}}
\newcommand{\proj}{\operatorname{proj}}
\newcommand{\inj}{\operatorname{inj}}
\newcommand{\add}{\operatorname{add}}
\newcommand{\filt}{\operatorname{Filt}}
\newcommand{\fac}{\operatorname{Fac}}
\newcommand{\sub}{\operatorname{Sub}}

\newcommand{\cA}{\mathcal{A}}
\newcommand{\cC}{\mathcal{C}}
\newcommand{\cS}{\mathcal{S}}
\newcommand{\cT}{\mathcal{T}}
\newcommand{\cF}{\mathcal{F}}

\newcommand{\cJ}{\mathcal{J}}
\renewcommand{\ker}{{\rm Ker}}
\newcommand{\coker}{{\rm Coker}}
\newcommand{\im}{{\rm Im}}
\newcommand{\ideal}{\mathbf{Id}}
\newcommand{\filter}{\mathbf{Filter}}
\newcommand{\rad}{\mbox{rad}}

\newcommand{\bZ}{{\mathbb Z}}

\newcommand{\lra}{\longrightarrow}

\newcommand{\ra}{\rightarrow}
\newcommand{\sdp}{\times\kern-.2em\vrule height1.1ex depth-.05ex}
\newcommand{\epi}{\lra \kern-.8em\ra}

\makeatletter
\@addtoreset{equation}{section}
\makeatother

\title[Theory of Infinite Maximal Green Sequences]{Theory of Infinite Maximal Green Sequences: Finest Stability Data, CBHO Sequence and Cluster quiver pattern}

\author{Fang Li}
\thanks{Fang Li\qquad \texttt{fangli@zju.edu.cn}\\School of Mathematical Sciences, Zhejiang University, Yuhangtang Road 866, Hangzhou, Zhejiang 310058, China P.R.}

\author{Kangping Qu}
\thanks{Kangping Qu\qquad \texttt{qukp.zju@gmail.com}\\School of Mathematical Sciences, Zhejiang University, Yuhangtang Road 866, Hangzhou, Zhejiang 310058, China P.R.}
\date{\today}

\begin{document}
	
	\begin{abstract}
		In this paper, we introduce maximal green sequences of infinite length in abelian length categories and establish natural bijections among infinite maximal green sequences, finest stability data, and complete backward $\operatorname{Hom}$-orthogonal (briefly, CBHO) sequences of bricks.
		
		For cluster quiver patterns (equivalently, cluster algebras) of infinite rank, we define maximal green sequences of infinite length via $c$-vectors. Moreover, we prove that under suitable conditions such a maximal green sequence in a cluster quiver pattern of infinite rank can be categorified as a maximal green sequence of torsion classes in the corresponding representation category via a specific construction, thereby providing an example of a maximal green sequence in an abelian length category. This framework establishes a connection between infinite maximal green sequences in representation theory and maximal green sequences for cluster algebras of infinite rank.
	\end{abstract}
	
	\maketitle
	
	{\small
		\noindent\textbf{Keywords:} Abelian length category; Torsion class; Maximal green sequence; Cluster theory.
		
		\medskip
		
		\noindent\textbf{Mathematics Subject Classification:} 18E10, 16W70, 13F60.
	}
	
	\tableofcontents
	
	\section{Introduction}
	
	Maximal green sequences were firstly introduced by Keller in \cite{K} and are defined as particular sequences of mutations of framed cluster quivers. Since then, they have found remarkable applications in diverse areas: they appear in the counting of BPS states in string theory, give rise to quantum dilogarithm identities, and play a key role in computing refined Donaldson–Thomas invariants.
	
	Through categorification, maximal green sequences can be naturally interpreted in representation theory. For instance, given a cluster quiver, a maximal green sequence is in bijection with a finite chain of torsion classes. This perspective enables the study of maximal green sequences in module categories and, more generally, in arbitrary abelian categories.
	
	Stability conditions first appeared in geometric invariant theory in the work of Mumford \cite{Mu}, where they were used to construct moduli spaces of vector bundles on algebraic curves. King \cite{Ki} later linked stability conditions in geometric invariant theory to the representation theory of finite-dimensional algebras. Bridgeland studied the stability conditions in triangulated categories \cite{Br}, while Rudakov studied them in abelian categories \cite{Ru}. A key feature of stability conditions is the Harder–Narasimhan filtration, which can be defined axiomatically in abelian categories; this leads to the notion of stability data introduced in \cite{GKR}.
	
	In the finite setting, the relationships among these notions have been extensively studied. Br\"ustle, Smith, and Treffinger \cite{BST2} showed that finite chains of torsion classes correspond precisely to finite finest stability data. Chen, Lin, and Ruan \cite{CLR} gave a criterion for a stability data to be finest, and Liu and Li \cite{LL} studied when a maximal green sequence can be induced by a central charge.
	
	In the finite setting, Igusa \cite{Ig2} established a one-to-one correspondence among finite maximal green sequences, finite finest stability data, and complete forward $\hom$-orthogonal sequences of bricks (which are essentially the same as backward sequences up to reversing order). In this paper, we introduce the notion of maximal green sequences of infinite length and extend this equivalence to arbitrary abelian length categories, allowing maximal green sequences of infinite length. Our first main result extends Igusa's work by establishing the following three-way equivalence.
	
	\begin{theorem}[Theorem \ref{thm:correspondence of three sets}]\label{thm:main A}
		Let $\mathcal{A}$ be an abelian length category. There are natural bijections among the three sets:
		\begin{enumerate}
			\item the set of finest stability data on $\mathcal{A}$;
			\item the set of maximal green sequences in $\mathcal{A}$ (including those of infinite length);
			\item the set of complete backward $\hom$‑orthogonal (CBHO) sequences of bricks in $\mathcal{A}$.
		\end{enumerate}
	\end{theorem}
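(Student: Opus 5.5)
The plan is to use the chain of torsion classes as the hub and build two bijections: maximal green sequences $\leftrightarrow$ finest stability data, and maximal green sequences $\leftrightarrow$ CBHO sequences. We model the infinite case on the finite arguments of Brüstle--Smith--Treffinger and Igusa. We take a maximal green sequence in $\mathcal{A}$ to be a chain $\mathcal{T}_\bullet=\{\mathcal{T}_i\}$ of torsion classes, possibly indexed by a countable or infinite ordinal-type set, running from $0$ to $\mathcal{A}$. It should be maximal (no torsion class can be inserted between consecutive members) and complete, so that at every limit position the class equals the union (respectively intersection) of the preceding (respectively following) ones. Each covering relation $\mathcal{T}_{i+1}\subsetneq \mathcal{T}_i$ has a unique brick label $B_i$, the minimal brick with $\mathcal{T}_{i+1}^{\perp}\cap\mathcal{T}_i=\filt(B_i)$. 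The length hypothesis on $\mathcal{A}$ is what makes this brick labelling available for covers in an arbitrary abelian length category, not only in module categories.

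\textbf{Step 1 (MGS $\to$ CBHO).} From $\mathcal{T}_\bullet$ we read off the sequence of labels $(B_i)$. Backward $\hom$-orthogonality means $\hom(B_i,B_j)=0$ whenever $B_i$ comes before $B_j$ in the order fixed by the paper's definition. It follows from $B_i\in\mathcal{T}_i\cap\mathcal{T}_{i+1}^{\perp}$ together with the inclusions of torsion and torsion-free classes along the chain. Completeness is the statement that every object admits a filtration with factors in the $\filt(B_i)$, ordered compatibly; this is the Harder--Narasimhan-type property. It comes from the fact that an object of finite length meets only finitely many steps nontrivially. Indeed, the function $i\mapsto t_{\mathcal{T}_i}X$ changes only at finitely many $i$, because the length of $X$ is finite.

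\textbf{Step 2 (CBHO $\to$ MGS).} Given a CBHO sequence $(B_i)$, we define $\mathcal{T}_i=\filt(\fac\{B_j: j\le i\})$ in the appropriate direction, or equivalently via the torsion-free side. We then check three things: these are torsion classes, consecutive ones are covers labelled by $B_i$, and the chain is complete. Completeness of the brick sequence will give that the union/intersection conditions hold at limits and that the chain starts at $0$ and ends at $\mathcal{A}$. To see that the two maps are mutually inverse, we use uniqueness of the brick label of a cover and the fact that a complete chain is determined by its covers. Determination by covers again relies on the finite-length argument of Step 1: any object is reached after finitely many covers.

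\textbf{Step 3 (stability data).} We regard a stability datum as a totally ordered set $\Phi$ together with subcategories $\mathcal{P}_\phi$ giving Harder--Narasimhan filtrations; it is finest when each $\mathcal{P}_\phi$ is $\filt$ of a single brick (or is minimal in the paper's sense). Sending a CBHO sequence to $(\Phi,\mathcal{P}_i=\filt(B_i))$ gives a stability datum, since HN filtrations exist by completeness and are unique by $\hom$-orthogonality. The inverse direction extracts the bricks from the semistable classes, following the result cited from Chen--Lin--Ruan. Alternatively, we can go through torsion classes $\mathcal{T}_\phi=\filt(\bigcup_{\psi\ge\phi}\mathcal{P}_\psi)$, as in Brüstle--Smith--Treffinger.

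The main obstacle is the limit behaviour in the infinite case. We must show that the chain built from a CBHO sequence, or from a stability datum, has no gaps: there is no torsion class strictly between $\bigcup_{j<i}\mathcal{T}_j$ and $\mathcal{T}_i$ that is not accounted for. We must also show that maximality together with completeness really forces every simple-like piece to appear as a label. To handle this, we will first try a finite-length reduction. Pick a putative intermediate torsion class and an object $X$ witnessing the strict inclusion. The finite HN filtration of $X$ involves only finitely many bricks, so the problem reduces to a finite interval of the chain, where Igusa's finite argument applies.
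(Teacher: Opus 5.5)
You have a genuine gap in the direction from a CBHO sequence back to a maximal green sequence (or stability datum). In the paper, \emph{complete} means only that the backward $\hom$-orthogonal sequence cannot be enlarged by inserting a further brick. It does not mean that every object has a filtration with factors in the $\filt(B_\varphi)$. You identify the two in Step~1 and then rely on that identification in Step~2 (``completeness of the brick sequence will give \dots'') and Step~3 (``HN filtrations exist by completeness''). So the substantive implication is never proved: that non-extendability forces the chain $\cT_F=\filt\fac\{B_\varphi\mid\varphi\in F\}$ to be complete, maximal and to end at $\cA$. The repair in your last paragraph is circular. It takes ``the finite HN filtration of $X$'' with respect to the CBHO sequence, which is exactly what is not yet available. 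It then applies Igusa's finite theorem to a ``finite interval'' of the chain, which is not an instance of his hypotheses. (The paper does not prove this direction itself: it imports the bijection $\zeta$ from Demonet--Keller. It then gets $\xi$ bijective from $\zeta\circ\xi=\eta$, with $\eta$ proved bijective via the Chen--Lin--Ruan criterion.)

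A self-contained proof needs an insertion argument. Suppose $\mathcal{U}\subsetneq\cT'\subsetneq\mathcal{V}$ with $\mathcal{U}=\cT_{\uparrow\varphi\setminus\{\varphi\}}$ and $\mathcal{V}=\cT_{\uparrow\varphi}$. A nonzero object $S$ of minimal length in $\mathcal{U}^{\perp}\cap\cT'$ is a brick, and every nonzero map $S\to B_\varphi$ is a monomorphism. So nonzero maps in both directions would give $S\cong B_\varphi$, which is impossible since $B_\varphi\notin\cT'$. Hence $S$ can be inserted just above or just below $\varphi$, contradicting completeness. The same argument with $\mathcal{U}=\cT_G$, $G=\bigcap_{F\in\cJ}F$, and $\cT'=\bigcap_{F\in\cJ}\cT_F$ shows the chain is closed under meets. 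Also $\cT_\Phi=\cA$ because a CBHO sequence contains every simple object. Conversely, in Step~1 you still owe the (easy) fact that the labels of a maximal chain cannot be extended. If a brick $B$ could be inserted, its first HN factor gives a nonzero map $B_{\varphi_1}\to B$, placing $\varphi_1$ below the insertion point. Its last factor gives a nonzero map $B\to B_{\varphi_n}$, placing $\varphi_n$ above it, which contradicts $\varphi_1\ge\varphi_n$.

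A second, smaller issue is the indexing. Maximal green sequences need not be countable or of ordinal type. In the paper's Kronecker example the total order on $\mathbb{P}_1(K)$ is arbitrary, and when it is dense, large stretches of the chain contain no consecutive members at all. Your description via consecutive members and limit positions works only if ``complete'' means closed under arbitrary joins and meets in $\tors\cA$. Your claim that a chain is determined by its covers needs what the paper actually uses:
\begin{enumerate}
\item completely join-irreducible members are exactly the tops of covers;
\item every member is the join of those below it;
\item hence the chain is isomorphic to $\filter_0(\Phi)$, for $\Phi$ the completely join-irreducible members in reversed order.
\end{enumerate}
Finally, ``finest means each semistable class is $\filt$ of a single brick'' is not a definition but a proposition, and Step~3 needs it. The paper proves it from the Chen--Lin--Ruan $\hom$-criterion and the fact that a minimal-length semistable object is a brick.
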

	
	Cluster algebras, introduced by Fomin and Zelevinsky, are commutative algebras with a distinguished set of generators called cluster variables\cite{FZ1}; maximal green sequences play a central role in their mutation theory. Cao and Li \cite{CL} previously proved that in finite rank, if a finite quiver is a triangular extension of two finite quivers, then a maximal green sequence for each component gives a maximal green sequence for the whole quiver. This result provides a useful tool for constructing maximal green sequences.
	
	Cluster algebras of infinite rank have been studied from various perspectives. Gratz \cite{G}  showed that any cluster algebra of infinite rank can be expressed as a colimit of finite rank ones, which provides a framework for extending the results on finite rank to the infinite setting. Huang and Li \cite{HL} extended this colimit idea to a categorical framework by considering the so-called strongly almost finite quivers. The present work is inspired by the colimit perspective developed in \cite{G} and then in \cite{HL}.
	
	In this paper, we work with strongly almost finite quivers and define maximal green sequences of infinite length. We extend the above result to this infinite setting as follows.
	
	\begin{theorem}[Theorem \ref{thm:maximal green sequence and triangular extension}]\label{thm:main B}
		Suppose $Q$ is a strongly almost finite quiver and is a triangular extension of the sequence of finite quivers $\{Q_p\}_{p=1}^\infty$. For each $p\ge 1$, let $\mathbf{i}_p=(i_{p1},\dots,i_{pk_p})$ be a maximal green sequence of $Q_p$. Then
		\begin{equation}
			\mathbf{i}=(\mathbf{i}_1; \mathbf{i}_2; \cdots )
		\end{equation}
		is a maximal green sequence of $Q$.
	\end{theorem}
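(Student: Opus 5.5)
The plan is to reduce the infinite statement to the finite Cao--Li theorem by passing through the colimit description of $Q$. Since $Q$ is a triangular extension of $\{Q_p\}_{p\ge1}$, for each $n$ the full subquiver $Q_{\le n}$ on the vertices of $Q_1,\dots,Q_n$ is a finite quiver. By induction on $n$, it is a triangular extension of $Q_{\le n-1}$ and $Q_n$: all arrows between $Q_{\le n-1}$ and $Q_n$ point in one direction. Applying the finite result of \cite{CL} inductively then shows that $(\mathbf{i}_1;\dots;\mathbf{i}_n)$ is a maximal green sequence of $Q_{\le n}$, for every $n$.

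The next step is to transfer this to $Q$ itself, using the definition of maximal green sequences of infinite length via $c$-vectors. I expect that definition to require two things: each finite prefix of $\mathbf{i}$ is a green sequence, i.e.\ every mutation is at a green vertex; and every vertex of $Q$ eventually becomes red and stays red, so that all $c$-vectors are eventually negative. The key local claim is the following. Mutating the framed quiver $\widetilde Q$ along $(\mathbf{i}_1;\dots;\mathbf{i}_n)$ changes only arrows incident to vertices of $Q_{\le n}$ and frozen vertices. Because the extension is triangular, the $c$-vectors at vertices of $Q_{\le n}$ agree with those computed in the framed quiver of $Q_{\le n}$. Here strong almost finiteness ensures that each vertex has finitely many neighbours, so each mutation is a well-defined finite operation.

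From the local claim, each mutation in $\mathbf{i}_n$ is green in $Q$, because it is green in $Q_{\le n}$. After the block $\mathbf{i}_n$, every vertex of $Q_{\le n}$ is red. Later blocks mutate only at vertices of $Q_m$ with $m>n$, and I must check that they never turn a vertex of $Q_{\le n}$ green again. The triangular orientation should give this. The $c$-vector of a vertex $v\in Q_{\le n}$ changes under mutation at $k\in Q_m$ only through arrows between $v$ and $k$, and all such arrows point in the fixed direction. Using sign-coherence of $c$-vectors in the finite quivers $Q_{\le m}$, where Cao--Li already guarantees that the final state is all-red, the vertex $v$ stays red. Since every vertex lies in some $Q_n$, every vertex is eventually permanently red, and $\mathbf{i}$ is a maximal green sequence of $Q$.

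The main obstacle is the compatibility between the infinite-rank $c$-vectors and the finite truncations. Concretely, I must verify that mutation in $Q$ restricted to the first $N$ steps coincides, on the relevant vertices, with mutation in the finite quiver $Q_{\le n}$, even though $Q$ has vertices outside $Q_{\le n}$ adjacent to it. My first attempt is to show that vertices outside $Q_{\le n}$ affect only the rows and columns of the mutated exchange matrix that involve those outside vertices, never the $c$-vectors of vertices in $Q_{\le n}$. This rests on the triangular condition, which says arrows go from earlier blocks to later ones only. If that fails, the fallback is to invoke Gratz's colimit result \cite{G} in the form used in \cite{HL}: identify the seed of $Q$ after finitely many mutations with a colimit of the seeds of $Q_{\le n}$, so that $c$-vectors are computed componentwise in the finite pieces.
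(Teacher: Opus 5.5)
You handle the first half correctly: restrict to $V(Q_{\le n})\sqcup\bar V(Q)$ and apply Cao--Li inductively. This shows that every step is green and that all of $Q_{\le n}$ is red after block $n$.

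The gap is that this is not what Definition~\ref{def:mgs-inf-rank} asks for. That definition has two requirements:
\begin{enumerate}
\item The mutated quivers must converge: for every $s$, the full subquiver on $[s]\sqcup[\bar s]$ must be eventually constant, arrows between mutable vertices included.
\item The limit must be ice-isomorphic to $\check Q$ via a permutation $\sigma$.
\end{enumerate}
``Every vertex is eventually permanently red'' gives neither by itself. It does not force the arrows among mutable vertices to stabilize. Nor does it force each mutable vertex to end with exactly one arrow to one frozen vertex, bijectively, with mutable part $Q$ up to $\sigma$.

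In finite rank that last step is \cite[Proposition 2.10]{BDP}, which you cannot apply to the infinite quiver, so it has to be done block by block. After block $p$, the full subquiver on $V(Q_{\le p})\sqcup\bar V(Q_{\le p})$ is $\mu_{\mathbf i_p}\cdots\mu_{\mathbf i_1}(\hat Q_{\le p})$, which is ice-isomorphic to $\check Q_{\le p}$. You then need that no later mutation changes \emph{any} arrow of this subquiver. This exact invariance is much stronger than ``stays red''. It is what makes each window stabilize and makes the permutations $\sigma_{\le p}$ compatible, so that they glue to $\sigma$. The paper obtains it from property (B), Lemma~\ref{lem:equivalent condition on sign-coherence} and Corollary~\ref{cor:no arrows between blocks}.

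Your justification for the later-block step is also insufficient, even for redness.
\begin{itemize}
\item Arrows between $v\in Q_{\le n}$ and a vertex $k$ of a later block do not keep a fixed direction. In the alternating $\mathbb A_\infty$ example, $3\to 2$ becomes $2\to 3$ after the block $(2,1)$, and becomes $3\to 2$ again when $3$ is mutated.
\item What is actually needed is that, at every step, all vertices outside the current block lie on the same side of $k$ as the frozen neighbours of $k$. Then no path $a\to k\to b$ with $a,b$ outside the block can occur. This is a uniform sign-coherence of the column of $k$, extended by the rows of the non-block vertices \cite[Corollary 3.3, Proposition 3.7]{CL}.
\item Ordinary sign-coherence of $c$-vectors does not give this. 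Knowing that the final quiver of $Q_{\le m}$ is all red also says nothing about intermediate steps.
\end{itemize}

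Conversely, the compatibility you single out as the main obstacle is automatic and uses no triangularity. Mutation commutes with restriction to any full subquiver containing the mutated vertices, so \cite{G} is unnecessary. Triangularity is needed exactly in the invariance step.

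Finally, you have the orientation backwards. The hypothesis forbids arrows from $Q_q$ to $Q_p$ for $q<p$, so arrows go from later blocks to earlier ones. With the paper's framing $\bar i\to i$ this matters. For $1\to 2$ with $Q_1=\{1\}$ and $Q_2=\{2\}$, the sequence $(1,2)$ leaves an arrow $\bar 2\to 1$, so it is not maximal green.
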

	
	Theorem \ref{thm:main B} shows how to assemble infinite maximal green sequences from finite ones. To understand the representation-theoretic content of such sequences, we turn to categorification. Our third main theorem provides a categorical realization in the module category $\mod KQ$.
	
	\begin{theorem}[Theorem \ref{thm:categorification of mgs}]\label{thm:main C}
		Let $Q$ be a strongly almost finite quiver and $\mathbf{i} = (i_1, i_2, \dots)$ be a maximal green sequence of $Q$ satisfying two natural conditions. Then there exists a unique maximal green sequence $\{\cT_k\}_{k\ge 0}$ of torsion classes in $\mod KQ$ such that for each $k \ge 1$,
		\begin{equation}
			\cT_{k-1}^\perp \cap \cT_k = \add B_k,
		\end{equation}
		where $B_k$ is a rigid brick whose dimension vector equals the $c$-vector associated to the mutation at $i_k$.
	\end{theorem}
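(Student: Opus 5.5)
We build the chain of torsion classes step by step from the mutation sequence, using the finite-rank categorification on the finite subquivers of $Q$ and passing to a colimit. Concretely, write $Q$ as an increasing union of finite full subquivers $Q^{(n)}$, as provided by strong almost finiteness. Because $\mathbf{i}$ is a maximal green sequence of $Q$ in the $c$-vector sense, every finite initial segment $(i_1,\dots,i_k)$ is a green sequence. We expect the two ``natural conditions'' to guarantee two things. First, each step $k$ involves only finitely many vertices, so there is an $n(k)$ such that the mutations $i_1,\dots,i_k$ and the $c$-vectors $c_1,\dots,c_k$ are already computed inside $Q^{(n)}$ for all $n\ge n(k)$. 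Second, the $c$-vectors exhaust the positive real roots in the appropriate sense.

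\emph{Step 1: the finite-rank bricks.} For each $n\ge n(k)$, apply the finite-rank categorification of green sequences (Keller, Br\"ustle--Yang, Igusa) to $\mod KQ^{(n)}$. This produces the rigid brick $B_k^{(n)}$ with $\underline{\dim} B_k^{(n)}=c_k$, together with torsion classes $\cT_k^{(n)}=\filt(B_1^{(n)},\dots,B_k^{(n)})$. Since $KQ^{(n)}$ is hereditary, a rigid brick is determined by its dimension vector. Hence $B_k:=B_k^{(n)}$, viewed in $\mod KQ$ by extension by zero, does not depend on $n$, and it is a rigid brick in $\mod KQ$, because extension by zero from a full subquiver preserves $\hom$ and $\ext^1$. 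The finite-rank theory also gives backward $\hom$-orthogonality, namely $\hom(B_i,B_j)=0$ for $i<j$, in $\mod KQ^{(n)}$; this transfers to $\mod KQ$ for the same reason.

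\emph{Step 2: the chain and its maximality.} Define $\cT_k=\filt(B_1,\dots,B_k)$. We then show that $(B_1,B_2,\dots)$ is a CBHO sequence of bricks in $\mod KQ$, and apply Theorem~\ref{thm:correspondence of three sets} to obtain a maximal green sequence $\{\cT_k\}$ with $\cT_{k-1}^\perp\cap\cT_k=\add B_k$. Backward $\hom$-orthogonality was settled in Step 1. Completeness is the main obstacle: we must show there is no room to insert another brick, i.e.\ that $\bigcup_k\cT_k=\mod KQ$ and that the chain starts at $0$ with no gaps. For the union, take any finite-dimensional module $M$. It is supported on some $Q^{(n)}$. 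The maximality condition on $\mathbf{i}$ should mean that for $k$ large enough all vertices of $Q^{(n)}$ have become red. By the finite-rank correspondence, $\mod KQ^{(n)}\subseteq \filt(B_1,\dots,B_k)$ restricted to $Q^{(n)}$, so $M\in\cT_k$. The absence of gaps between $\cT_{k-1}$ and $\cT_k$ follows from the finite-rank statement that each mutation is a minimal inclusion, i.e.\ $\cT_{k-1}\subsetneq\cT_k$ is a cover. We check this by contradiction: an intermediate torsion class would be detected on some $Q^{(n)}$ with $n\ge n(k)$, because torsion classes in a length category are determined by the bricks they contain, and each brick is finitely supported. Here is where we rely on the hypotheses that each vertex is mutated only finitely often and that $c$-vectors stabilize along the exhaustion, so the restriction to $Q^{(n)}$ really is the finite mutation sequence.

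\emph{Step 3: uniqueness.} Any maximal green sequence with $\cT_{k-1}^\perp\cap\cT_k=\add B_k$ is determined by its brick labels, by the bijection of Theorem~\ref{thm:correspondence of three sets}. The labels $B_k$ are in turn determined by their dimension vectors $c_k$, since rigid indecomposables over a hereditary algebra are determined by dimension vector. Hence the sequence $\{\cT_k\}$ is unique.
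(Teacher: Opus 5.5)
There is a genuine gap in your proposal, in the exhaustion step $\bigvee_k\cT_k=\mod KQ$. Your Steps 1 and 3, and the ``no gaps'' part of Step 2, are sound and close to the paper's Steps 1--3. Exhaustion is exactly where the theorem's actual hypotheses enter: (1) the permutation $\sigma$ with $Q_\infty\cong\check Q$ has no infinite cycle, and (2) for each $m$, eventually $Q_n$ has no arrow between $[\bar m]$ and $[m]^c$. Your argument uses neither; the hypotheses you guessed are not these.

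You claim that once the vertices $1,\dots,n$ are red in $Q_k=\mu_{i_k}\circ\cdots\circ\mu_{i_1}(\hat Q)$, the finite-rank correspondence gives $\mod KQ^{[n]}\subseteq\cT_k$. This is false. The correspondence must be applied on a window $Q^{[m']}$ large enough to compute $i_1,\dots,i_k$ faithfully, and there $(i_1,\dots,i_k)$ is merely a green sequence. A red vertex only records that mutating there would shrink the torsion class; its label in $[n]$ says nothing about modules supported on $[n]$.

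Concretely, let $Q$ have arrows $2p\to 2p-1$ for $p\ge1$, and take the sequence $(2,1,2;\,4,3,4;\,\dots)$. In $Q_2$, vertex $1$ is red, with $c$-vector $-(e_1+e_2)$, and it stays red from then on. But vertex $2$ is still green with $c$-vector $e_1$. So the next label is $B_3=S(1)\in\cT_2^{\perp}$, and therefore $\mod KQ^{[1]}\not\subseteq\cT_2$.

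The missing idea is the paper's Step 4, which places the projectives in some $\cT_N$ via $g$-vectors.
\begin{itemize}
\item Given $P(a)$, use (1) to choose $[m]\supseteq\mathrm{supp}\,P(a)$ that is a union of $\sigma$-orbits.
\item Use convergence together with (2) to choose $N$ such that $(Q_N)^{[m]\sqcup[\bar m]}=(Q_\infty)^{[m]\sqcup[\bar m]}$ and $Q_N$ has no arrow between $[\bar m]$ and $[m]^c$.
\item On a faithful window $Q^{[m']}$, the $c$-matrix of the support $\tau$-tilting pair $(M_N,P_N)$ with $\fac M_N=\cT_N$ is then block lower triangular. Its upper-left block is $-P^{-1}$, where $P$ is the permutation matrix of $\sigma|_{[m]}$.
\item Tropical duality $G_N=-(C_N^{-1})^{T}$ makes the $g$-matrix block upper triangular with upper-left block $P^{T}$. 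So $e_a$ occurs as a $g$-vector, $P(a)$ is a summand of $M_N$, and $P(a)\in\cT_N$.
\end{itemize}
Condition (2) supplies the zero block, and condition (1) makes the diagonal block a full permutation matrix.

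In the example above, $\sigma=(1\,2)$ on the first block, so $[1]$ is not a union of orbits, and $Q_2$ still contains the arrow $\bar1\to2$. The argument works instead with $m=2$ and $N=3$, where $M_3=P(1)\oplus P(2)$.
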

	
	This theorem bridges combinatorial $c$-vectors and representation-theoretic data. In particular, it yields a converse to Theorem \ref{thm:main B}: under suitable conditions, a maximal green sequence of a triangular extension of infinite quivers decomposes into maximal green sequences of its finite components. More precisely, we have the following corollary.
	
	\begin{corollary}[Corollary \ref{cor:triangulation and mgs}]
		Let $Q$ be a strongly almost finite quiver which is a triangular extension of the sequence of finite quivers $\{Q_p\}_{p=1}^\infty$. Let $\mathbf{k} = \mathbf{k}_1 \ast \mathbf{k}_2 \ast \cdots$ be a mutation sequence of $Q$ where each $\mathbf{k}_p$ is a sequence of vertices of $Q_p$. Assume that $\mathbf{k}$ satisfies conditions (1) and (2) of Theorem~\ref{thm:categorification of mgs}. Then $\mathbf{k}$ is a maximal green sequence of $Q$ if and only if each $\mathbf{k}_p$ is a maximal green sequence of $Q_p$.
	\end{corollary}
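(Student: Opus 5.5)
One direction is immediate. If each $\mathbf{k}_p$ is a maximal green sequence of $Q_p$, then Theorem~\ref{thm:main B} (Theorem~\ref{thm:maximal green sequence and triangular extension}) says the concatenation $\mathbf{k}=\mathbf{k}_1\ast\mathbf{k}_2\ast\cdots$ is a maximal green sequence of $Q$. Conditions (1) and (2) are not needed here.

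For the converse, assume $\mathbf{k}$ is a maximal green sequence of $Q$ satisfying (1) and (2). The plan is to pass to torsion classes, restrict there, and transport back to $c$-vectors.

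\emph{Step 1: categorify.} By Theorem~\ref{thm:categorification of mgs} there is a unique maximal green sequence $\{\cT_k\}_{k\ge0}$ of torsion classes in $\mod KQ$. Each step satisfies $\cT_{k-1}^\perp\cap\cT_k=\add B_k$, where $B_k$ is a rigid brick whose dimension vector is the $k$-th $c$-vector. By Theorem~\ref{thm:correspondence of three sets}, the bricks $(B_1,B_2,\dots)$ then form a CBHO sequence in $\mod KQ$.

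\emph{Step 2: locate the bricks.} Since $\mathbf{k}_p$ only mutates at vertices of $Q_p$ and $Q$ is a triangular extension (all arrows between distinct blocks $Q_p,Q_q$ point one way), the $c$-vector recursion should keep each $c$-vector produced during $\mathbf{k}_p$ supported on $Q_p$. I would prove this by induction along the sequence, using sign-coherence. It follows that the bricks $B_k$ coming from $\mathbf{k}_p$ are bricks of $\mod KQ_p$, viewed inside $\mod KQ$ via the full exact embedding.

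\emph{Step 3: restrict completeness.} The subsequence of bricks from $\mathbf{k}_p$ is backward Hom-orthogonal, because that property is inherited from the whole sequence. The remaining issue is completeness in $\mod KQ_p$. Completeness of the full sequence says every nonzero module has a filtration, or Harder–Narasimhan-type decomposition, by the $B_k$. For $M\in\mod KQ_p$, a factor supported on $Q_q$ with $q\ne p$ cannot appear, since dimension vectors are additive and all supports are disjoint. Hence $M$ is filtered by bricks from $\mathbf{k}_p$ alone, so that subsequence is a CBHO sequence in $\mod KQ_p$. Theorem~\ref{thm:correspondence of three sets}, applied to $\mod KQ_p$ together with the finite-rank categorification (Igusa, or the $c$-vector–brick correspondence), then shows $\mathbf{k}_p$ is a maximal green sequence of $Q_p$.

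I expect Step 2 to be the main obstacle. The $c$-vectors of $\mathbf{k}$ are computed in the framed quiver of all of $Q$, so one must check that earlier mutations in blocks $Q_q$ with $q<p$ do not alter the $c$-vectors seen during $\mathbf{k}_p$. If $\mathbf{k}_q$ has already made $Q_q$ fully red, then by the triangular orientation the frozen arrows at $Q_p$ should be untouched by later mutations. I would verify this using the same mechanism as in the proof of Theorem~\ref{thm:main B}, and use that the $c$-vectors for $\mathbf{k}_p$ in $Q$ coincide with those computed in $Q_p$.
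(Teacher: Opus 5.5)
Your ``if'' direction is the paper's: it is Theorem~\ref{thm:maximal green sequence and triangular extension}. For ``only if'' your architecture differs from Proposition~\ref{prop:decompose-mgs}. The paper intersects the categorified chain with $\mod KQ_p$ and shows that consecutive distinct members of $\{\cT_n\cap\mod KQ_p\}$ are covers. It gets finiteness from Proposition~\ref{prop:finitely generated torsion class}, and then asserts without argument that the resulting chain ``is $\mathbf{k}_p$ by construction''. You restrict the brick sequence instead. Finiteness is then automatic once Step~2 tells you which bricks lie in $\mod KQ_p$, and Step~2 is exactly the identification the paper leaves implicit. In Step~3, state explicitly that $\mod KQ_p$ is a Serre subcategory. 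Then HN filtrations of its objects stay inside it and give a finest stability data on $\mod KQ_p$ with semistable classes $\add B_n$, hence a CBHO sequence by Theorem~\ref{thm:correspondence of three sets}. Mere filtrability by the $B_n$, without the phase ordering, would not give completeness.

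The genuine gap is your plan for Step~2. You invoke the mechanism of Theorem~\ref{thm:maximal green sequence and triangular extension} under the proviso ``if $\mathbf{k}_q$ has already made $Q_q$ fully red''. In this direction that is what you are trying to prove, not a hypothesis you may use. The mechanism really needs it. If $\mathbf{k}_q$ leaves a green vertex, then afterwards the arrows between $Q_q$ and $Q_{q+1}$ point into the green vertices of $Q_q$ and out of the red ones. So $Q_{q+1}$ no longer receives all arrows from its complement, which is the shape used in property (B) and Lemma~\ref{lem:equivalent condition on sign-coherence}. Inducting on $p$ does not obviously help, because Step~3 for block $p$ already uses Step~2 for every later block.

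What works for arbitrary $\mathbf{k}_q$ is a frozen-row argument. Assume inductively that at the start of $\mathbf{k}_q$ the later vertices still have standard $c$-vectors, and that the arrows among them and into $Q_q$ are as in $\hat Q$. For the pattern of $\hat Q_q$, each later vertex $x$ then behaves like an extra frozen vertex whose row (arrows $x\to m$ counted positively) starts nonnegative. This row and the rows of the $c$-matrix of $\hat Q_q$ mutate by the same rule. By sign-coherence of that $c$-matrix, the rule is linear on nonnegative combinations of its rows. So the row of $x$ stays such a combination, and every arrow between $x$ and $m\in Q_q$ points the same way as the frozen arrows at $m$. Hence mutating at $m$, green or red, changes $c_x$ by $[\varepsilon_m b_{mx}]_+c_m=0$, where $\varepsilon_m$ is the sign of $c_m$. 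Since $m$'s arrows to any two later vertices point the same way, it also creates no arrow among later vertices. This proves Step~2 whether or not $\mathbf{k}_q$ is maximal. It also gives more. After $\mathbf{k}_p$, the $\bar V(Q_p)$-coordinates of the $c$-vectors of $Q_p$ never change, since later mutations only add vectors supported on later $\bar V(Q_q)$. So the all-red limit already forces every vertex of $Q_p$ to be red in $\mu_{\mathbf{k}_p}(\hat Q_p)$. Granting that $\mathbf{k}$ is green, as the categorification step also requires, this proves the converse without conditions (1), (2) or categorification.
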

	
	This paper is organized as follows. In Section~\ref{sec:preliminary}, we recall the necessary background on abelian length categories, torsion pairs, stability data, and lattice theory. Section~\ref{sec:correspondence} is devoted to the proof of Theorem \ref{thm:main A}, establishing the bijections among finest stability data, maximal green sequences (possibly of infinite length), and complete backward $\hom$-orthogonal sequences of bricks. In Section~\ref{sec:cluster}, we turn to cluster algebras of infinite rank: we first introduce the notion of maximal green sequences for strongly almost finite quivers and prove Theorem \ref{thm:main B}, which shows how to assemble infinite maximal green sequences from finite ones; we then provide a categorical realization of such sequences in the module category $\mod KQ$, proving Theorem \ref{thm:main C} and thereby establishing a converse to Theorem \ref{thm:main B}.
	
	\subsection*{Notation}
	Throughout this paper, $\cA$ denotes a $K$-linear Hom-finite small abelian length category over an algebraically closed field $K$. For a subcategory $\mathcal{X}\subseteq\cA$, $\add\mathcal{X}$ is the subcategory of finite direct sums of objects in $\mathcal{X}$, $\fac\mathcal{X}$ consists of quotients of objects in $\mathcal{X}$, $\sub\mathcal{X}$ consists of subobjects of objects in $\mathcal{X}$, and $\filt(\mathcal{X})$ is the smallest extension-closed subcategory containing $\mathcal{X}$ (i.e., objects admitting a finite filtration with composition factors in $\mathcal{X}$). For $\mathcal{X}\subseteq\cA$, $\prescript{\perp}{}{\mathcal{X}}=\{Y\in\cA\mid\hom_{\cA}(Y,X)=0,\ \forall X\in\mathcal{X}\}$ and $\mathcal{X}^{\perp}=\{Y\in\cA\mid\hom_{\cA}(X,Y)=0,\ \forall X\in\mathcal{X}\}$ denote the left and right orthogonal subcategories, respectively.
	
	\section{Preliminary}\label{sec:preliminary}
	In this section, we will recall some basic concepts in abelian categories and lattice theory.
	
	\subsection{ On abelian categories}
	Let $X\in \cA$, we say that $X$ has \textbf{finite length} if there exists a finite filtration
	\begin{equation}
		0=X_0\subsetneq X_1\subsetneq X_2\subsetneq \cdots \subsetneq X_n=X
	\end{equation}
	such that $X_i/X_{i-1}$ is a simple object in $\cA$. By the \textbf{Jordan-H\"older theorem}, whenever the filtration above exists, each simple module (composition factor) is uniquely determined by $X$ up to isomorphism and multiplicity. We say that $\cA$ is an \textbf{abelian length category} if each object in $\cA$ has finite length. For example, if $\Lambda$ is a finite dimensional algebra over a field $K$, the the category $\mod \Lambda$ of finitely generated $\Lambda$-modules is an abelian length category.
	
	\begin{definition}
		Suppose $\cA$ is an abelian category, and $\cT$, $\cF$ are full subcategories of $\cA$. Then $(\cT, \cF)$ is called a torsion pair if the following two conditions are satisfied:
		\begin{enumerate}
			\item $\hom_{\cA}(X, Y)=0$ for all $X\in \cT$ and $Y\in \cF$.
			\item for any object $X\in \cA$, there exists a short exact sequence
			\begin{equation}\label{eq:canonical sequence}
				0\rightarrow tX\rightarrow X\rightarrow X/tX\rightarrow 0
			\end{equation}
			such that $tX\in \cT$ and $X/tX\in \cF$.
		\end{enumerate}
		In this case, we say that $\cT$ is a \textbf{torsion class} and $\cF$ is a \textbf{torsion-free class}.
	\end{definition}
	
	The exact sequence \eqref{eq:canonical sequence} is unique up to isomorphism and is called the \textbf{canonical sequence} of $X$ with respect to the torsion pair $(\cT, \cF)$. It is well-known that a subcategory $\cT$ is a torsion class of some torsion pair if and only if $\cT$ is closed under quotients and extensions. Dually, a subcategory $\cF$ is a torsion-free class of some torsion pair if and only if $\cF$ is closed under subobjects and extensions. Moreover, a pair of subcategories $(\cT, \cF)$ is a torsion pair in $\cA$ if and only if $\cF = \cT^{\perp}$ and $\cT = \prescript{\perp}{}{\cF}$.
	
	Stability data were introduced in \cite{GKR} as a generalization of the stability function considered in \cite{BST2}. A stability data allows every object to be filtered by several semistable objects.
	
	\begin{definition}\cite[Definition 2.4]{GKR}
		Suppose that $\mathcal{A}$ is an abelian category, $\Phi$ is a totally ordered set, and a nonzero extension-closed subcategory $\Pi_{\varphi} \subset \mathcal{A}$ is given for every $\varphi \in \Phi$. A pair $\left(\Phi,\left\{\Pi_{\varphi}\right\}_{\varphi \in \Phi}\right)$ is called a \textbf{stability data} if
		
		\begin{enumerate}
			\item $\hom_{\mathcal{A}} \left(\Pi_{\varphi^{\prime}}, \Pi_{\varphi^{\prime \prime}}\right)=0$ for all $\varphi^{\prime}>\varphi^{\prime \prime}$ in $\Phi$;
			\item every non-zero object $X \in \mathcal{A}$ admits a unique filtration called \textbf{Harder-Narasimhan filtration} (HN-filtration for short):
			\begin{equation}\label{eq:HN-filtration}
				\begin{tikzcd}
					{0=X_0} & {X_1} & {X_2} & \cdots & {X_{n-1}} & {X_n = X} \\
					& {A_1} & {A_2} && {A_{n-1}} & {A_n}
					\arrow[hook, from=1-1, to=1-2]
					\arrow[hook, from=1-2, to=1-3]
					\arrow[two heads, from=1-2, to=2-2]
					\arrow[hook, from=1-3, to=1-4]
					\arrow[two heads, from=1-3, to=2-3]
					\arrow[hook, from=1-4, to=1-5]
					\arrow[hook, from=1-5, to=1-6]
					\arrow[two heads, from=1-5, to=2-5]
					\arrow[two heads, from=1-6, to=2-6]
				\end{tikzcd}
			\end{equation}
			with non-zero factors $A_i=X_i / X_{i-1} \in \Pi_{\varphi_i}$ and strictly decreasing $\varphi_{i}>\varphi_{i+1}$ for any $1\leq i\leq n-1$.
		\end{enumerate}
		
	\end{definition}
	
	\begin{remark}
		For convenience, we require that $\Pi_{\varphi}\ne \left\{0\right\}$ for any $\varphi\in \Phi$. It will not cause any problems essentially since $\left\{0\right\}$ makes no contributions to the filtration of an object $X$ and then we can delete them and we finally obtain a same HN-filtration.  Our definition is a little different from that in \cite{CLR} and the slicing considered in \cite{Tr}.
	\end{remark}
	
	The categories $\Pi_{\varphi}$ are called the \textbf{semistable subcategories} of the stability data $\left(\Phi,\left\{\Pi_{\varphi}\right\}_{\varphi \in \Phi}\right)$. The nonzero objects in $\Pi_{\varphi}$ are said to be \textbf{semistable of phase} $\varphi$, while the simple objects in $\Pi_{\varphi}$ are said to be \textbf{stable}. The filtration (2.1) is called the  \textbf{Harder-Narasimhan filtration (HN-filtration for short)} of $X$, which is unique up to isomorphism. The factors $A_i$ 's are called the \textbf{HN-factors} of $X$.
	
	A non-zero semistable subobject $Y$ of $X$ is called the \textbf{maximal semistable subobject}, if any non-zero semistable subobject $Y'$ of $X$ satisfies that the phase of $Y'$ is not greater than that of $Y$, and $Y' \subseteq Y$ when they have the same phase. Dually, a non-zero semistable quotient object $Z$ of $X$ is called the \textbf{minimal semistable quotient object}, if any non-zero semistable quotient object $Z'$ of $X$ satisfies that the phase of $Z'$ is not less than that of $Z$, and $Z'$ is a quotient of $Z$ when they have the same phase. By definition, we know that in \eqref{eq:HN-filtration}, $A_1=X_1 / X_0$ is the maximal semistable subobject of $X=X / X_0$, and $A_n$ is the minimal semistable quotient object of $X=X_n$. Define $\boldsymbol{\phi}^{+}(X):=\varphi_1$ and $\phi^{-}(X):=\varphi_n$. Then $X \in \Pi_{\varphi}$ if and only if $\phi^{+}(X)=\phi^{-}(X)=\varphi=: \phi(X)$.
	
	We have the following observation:
	
	\begin{lemma}\label{lem:phase in exact sequence}
		Suppose $0\rightarrow A\rightarrow B\rightarrow C\rightarrow 0$ is an exact sequence in the abelian category $\cA$, then:
		
		\begin{enumerate}
			\item $\phi^+(A)\leq \phi^+(B)$ and $\phi^-(C)\ge \phi^-(B)$;
			\item $\phi^+(B)\leq \max\left\{\phi^+(A),\phi^+(C)\right\}$ and $\phi^-(B)\ge \min\left\{\phi^-(A),\phi^-(C)\right\}$.
		\end{enumerate}
	\end{lemma}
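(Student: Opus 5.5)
The plan is to derive everything from two basic facts about HN-filtrations, both consequences of the Hom-vanishing axiom (1) together with uniqueness of HN-filtrations.

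\textbf{Fact (a).} If $X\neq 0$ and $\hom(Y,X)\neq 0$ for some nonzero $Y\in\Pi_\psi$, then $\psi\le\phi^+(X)$. Dually, if $\hom(X,Z)\neq 0$ with $Z\in\Pi_\psi$, then $\psi\ge\phi^-(X)$.

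To prove (a), take the HN-filtration $0=X_0\subset\cdots\subset X_n=X$ and a nonzero $f\colon Y\to X$. Let $i$ be minimal with $f(Y)\subseteq X_i$. Then the composite $Y\to X_i\to A_i$ is nonzero. Since $A_i\in\Pi_{\varphi_i}$, axiom (1) forces $\psi\le\varphi_i\le\varphi_1=\phi^+(X)$. The dual statement uses the last factor $A_n$ together with the quotient maps $X\to X/X_{i}$: take $i$ maximal such that $X_{i}\to Z$ is zero, so that $X_{i+1}\to Z$ is nonzero and factors through $A_{i+1}\to Z$, giving $\psi\ge\varphi_{i+1}\ge\varphi_n$.

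\textbf{Part (1).} Since $A\neq 0$ (if $A=0$ the inequality is vacuous or trivial), the first HN-factor $A_1'$ of $A$ is a nonzero object of $\Pi_{\phi^+(A)}$ embedded in $A$, and hence in $B$. Fact (a) then gives $\phi^+(A)\le\phi^+(B)$. Dually, the last HN-factor of $C$ is a nonzero quotient of $C$, and therefore of $B$, lying in $\Pi_{\phi^-(C)}$. So $\phi^-(C)\ge\phi^-(B)$.

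\textbf{Part (2).} Let $A_1$ be the first HN-factor of $B$, so $A_1\in\Pi_{\phi^+(B)}$ and $A_1=X_1\subseteq B$. Consider the composite $X_1\hookrightarrow B\to C$.
\begin{itemize}
\item If it is nonzero, Fact (a) applied to $C$ gives $\phi^+(B)\le\phi^+(C)$.
\item If it is zero, then $X_1$ factors through $A$, giving a nonzero map into $A$. Fact (a) then yields $\phi^+(B)\le\phi^+(A)$.
\end{itemize}
The statement for $\phi^-$ is dual: use the last HN-factor $B\twoheadrightarrow A_n$ restricted to $A$. If the restriction is nonzero, apply Fact (a) to $A$; otherwise the map factors through $C$. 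The degenerate cases where $A$ or $C$ is zero are trivial, with the convention that $\phi^\pm$ of the zero object is omitted from the max/min.

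The only delicate step is Fact (a), specifically locating a nonzero map into a single HN-factor. This is handled by the minimal/maximal index argument above, so no real obstacle is expected.
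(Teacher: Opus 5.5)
Your proof is correct, but it takes a different route from the paper's.

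\textbf{The paper's route.} The paper treats (1) as obvious from the definition. Implicitly it uses that the first HN-factor is the maximal semistable subobject and the last is the minimal semistable quotient. For (2), it pulls the extension back along the successive inclusions $C_{i-1}\subset C_i$ of the HN-filtration of $C$ and splices this with the HN-filtration of $A$. This yields
$B\in\filt\{\Pi_\varphi\mid \min\{\phi^-(A),\phi^-(C)\}\le\varphi\le\max\{\phi^+(A),\phi^+(C)\}\}$,
from which the paper reads off the bounds ``by definition''.

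\textbf{Your route.} You never build a filtration of $B$. Instead you isolate the Hom-vanishing statement (your Fact (a)) and prove it by the minimal/maximal index argument. You then get (2) from a two-case split: whether the first HN-factor of $B$ dies in $C$, and dually whether the last HN-factor of $B$ kills $A$.

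\textbf{What each buys.} Your version is self-contained. The paper's final step, passing from membership in the extension closure of an interval of phases to bounds on $\phi^\pm(B)$, actually needs exactly your Fact (a): one must map the first HN-factor of $B$ nonzero into some factor of the constructed filtration. Likewise, the ``obvious'' part (1) rests on the maximality property that Fact (a) establishes. Fact (a) is also slightly more general than the subobject formulation, since it handles arbitrary nonzero morphisms. The paper later uses this form silently, for instance in bounding $\phi^\pm(\im f)$ in the proof about finest stability data.

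The paper's route gives a stronger structural by-product: $B$ lies explicitly in the extension closure of the semistable subcategories with phases in the interval. That is the form matching how the torsion classes $\cT_F$ are built from the $\Pi_\varphi$.
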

	
	\begin{proof}
		(1) is obvious from the definition. To prove (2), consider the HN-filtration of $C$:
		\begin{equation}
			0=C_0\subset C_1\subset C_2\subset \cdots \subset C_n=C
		\end{equation}
		and the pullback of $0\rightarrow A\rightarrow B\rightarrow C\rightarrow 0$ along the inclusion morphism $\iota_n:C_{n-1}\rightarrow C_n$:
		\begin{equation}
			\begin{tikzcd}
				0 & A & {B_{n-1}} & {C_{n-1}} & 0 & {} \\
				0 & A & B & C & 0
				\arrow[from=1-1, to=1-2]
				\arrow[from=1-2, to=1-3]
				\arrow[equals, from=1-2, to=2-2]
				\arrow[from=1-3, to=1-4]
				\arrow["{s_n}", from=1-3, to=2-3]
				\arrow[from=1-4, to=1-5]
				\arrow["{\iota_n}", from=1-4, to=2-4]
				\arrow[from=2-1, to=2-2]
				\arrow[from=2-2, to=2-3]
				\arrow[from=2-3, to=2-4]
				\arrow[from=2-4, to=2-5]
			\end{tikzcd}
		\end{equation}
		We have $\coker s_n\cong \coker \iota_n$. Repeat this procedure, we know that 
		\begin{equation}
			B\in \filter\left\{\Pi_{\varphi}\ |\ \min\left\{\phi^-(A),\phi^-(C)\right\}\leq \varphi\leq \max\left\{\phi^+(A),\phi^+(C)\right\}\right\}
		\end{equation}
		and (2) follows by definition.
	\end{proof}
	
	Given an abelian category $\cA$, there is a partial order on the set of all stability data on $\cA$.
	
	\begin{definition}\cite{CLR}
		Let $\left(\Phi,\left\{\Pi_{\varphi}\right\}_{\varphi\in\Phi}\right)$ and $\left(\Psi,\left\{P_{\psi}\right\}_{\psi\in\Psi}\right)$ be two stability data on an abelian category $\mathcal{A}$.
		\begin{enumerate}
			\item They are called \textbf{equivalent} if there exists an order-preserved bijective map $r: \Phi \rightarrow \Psi$ such that $P_{r(\varphi)} = \Pi_{\varphi}$ for any $\varphi \in \Phi$.
			\item We say that the stability data $\left(\Psi,\left\{P_{\psi}\right\}_{\psi\in\Psi}\right)$ is \textbf{finer} than $\left(\Phi,\left\{\Pi_{\varphi}\right\}_{\varphi\in\Phi}\right)$, or the latter is \textbf{coarser} than the former, and write $\left(\Phi,\left\{\Pi_{\varphi}\right\}_{\varphi\in\Phi}\right) \preceq \left(\Psi,\left\{P_{\psi}\right\}_{\psi\in\Psi}\right)$, if there exists a surjective map $r: \Psi \rightarrow \Phi$ satisfying:
			\begin{enumerate}
				\item $\psi^{\prime} > \psi^{\prime\prime}$ implies $r(\psi^{\prime}) \geq r(\psi^{\prime\prime})$;
				\item for any $\varphi \in \Phi$, $\Pi_{\varphi} = \filt\{P_{\psi} \mid \psi \in r^{-1}(\varphi) \}$.
			\end{enumerate}
		\end{enumerate}
	\end{definition}
	
	The relation ``finer-coarser'' defines a partial order on the set of all stability data on $\cA$. The minimal elements with respect to this partial order is called the \textbf{finest} stability data. The authors in \cite{CLR} have provided a criterion for a stability data to be finest on arbitrary abelian category.
	
	\begin{theorem}\cite{CLR}\label{thm:finest stability data}
		Let $\mathcal{A}$ be an abelian category. A stability data $\left(\Phi,\left\{\Pi_{\varphi}\right\}_{\varphi \in \Phi}\right)$ on $\cA$ is finest if and only if for any $\varphi\in \Phi$ and any non-zero objects $X,Y\in \Pi_{\varphi}$ , $\hom_{\cA}(X, Y)\ne 0\ne \hom_{\cA}(Y,X)$.
	\end{theorem}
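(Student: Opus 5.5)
We prove the two implications separately. Both rest on one observation: a stability data can be refined exactly when some semistable subcategory $\Pi_{\varphi}$ can be split into two extension-closed pieces with Hom vanishing in one direction.

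\emph{Finest implies the Hom condition.} Argue by contraposition. Suppose there are $\varphi\in\Phi$ and nonzero $X,Y\in\Pi_\varphi$ with $\hom_{\cA}(X,Y)=0$; the case $\hom(Y,X)=0$ is symmetric. We build a strictly finer stability data by splitting $\Pi_\varphi$.

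The natural way to split is to put a stability structure on $\Pi_\varphi$ itself. Inside the category $\Pi_\varphi$ (with the exact structure inherited from $\cA$), let $\mathcal{T}'$ be the smallest subcategory of $\Pi_\varphi$ containing $X$ that is closed under extensions and under those quotients that stay in $\Pi_\varphi$. Set $\mathcal{F}'=\mathcal{T}'^{\perp}\cap\Pi_\varphi$. We then want to check that every $Z\in\Pi_\varphi$ has a canonical sequence $0\to tZ\to Z\to Z/tZ\to0$ with $tZ\in\mathcal{T}'$ and $Z/tZ\in\mathcal{F}'$.

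Take $tZ$ to be the sum of images of maps from $\mathcal{T}'$ into $Z$. In a length category this is a finite sum. The point to verify is that this image and its cokernel remain in $\Pi_\varphi$. This should follow from Lemma~\ref{lem:phase in exact sequence}: images and cokernels of maps between objects of phase $\varphi$ have $\phi^{\pm}$ squeezed to $\varphi$.

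Replace $\varphi$ by two phases $\varphi_1>\varphi_2$, placed in the order of $\Phi$ exactly where $\varphi$ was, and set $P_{\varphi_1}=\mathcal{T}'$ and $P_{\varphi_2}=\mathcal{F}'$. Then:
\begin{itemize}
\item Hom vanishing holds in the correct direction.
\item HN filtrations are obtained by refining each $\Pi_\varphi$-factor via its canonical sequence.
\item Uniqueness of these filtrations follows from uniqueness of canonical sequences.
\end{itemize}
The map $r$ collapsing $\varphi_1,\varphi_2$ back to $\varphi$ exhibits the new data as finer. It is strictly finer, because $X\in\mathcal{T}'$ while $Y\notin\mathcal{T}'$: if $Y$ were in $\mathcal{T}'$, it would be built from quotients of $X$, and $\hom(X,Y)=0$ would have to be compatible with that.

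That last exclusion is delicate, so a simpler choice may be better: take $\mathcal{F}'=\Pi_\varphi\cap{}^{\perp}$-closure generated by $Y$, or more concretely $\mathcal{F}'=\{Z\in\Pi_\varphi : \hom(X,Z)=0\}\ni Y$ and $\mathcal{T}'={}^{\perp}\mathcal{F}'\cap\Pi_\varphi\ni X$. Then the two pieces are automatically nonzero and distinct.

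\emph{Hom condition implies finest.} Suppose $(\Psi,\{P_\psi\})$ is finer than $(\Phi,\{\Pi_\varphi\})$ via $r$. Fix $\varphi$ and suppose $r^{-1}(\varphi)$ contained two elements $\psi'>\psi''$. Pick nonzero $X\in P_{\psi'}$ and $Y\in P_{\psi''}$; both lie in $\Pi_\varphi$. By axiom (1) for $\Psi$, $\hom(X,Y)=0$, which contradicts the hypothesis. Hence $r$ is a bijection with $P_{r^{-1}(\varphi)}=\Pi_\varphi$. It is order preserving: $r$ is monotone, and since it is injective the inequalities $\geq$ become strict. So the two data are equivalent, and the given data is minimal, i.e.\ finest.

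\emph{Main obstacle.} The hard part is the forward direction. One has to show that the torsion-pair decomposition inside $\Pi_\varphi$ exists; $\Pi_\varphi$ is only extension-closed, not abelian. One also has to show that splitting preserves existence and uniqueness of HN filtrations for all objects of $\cA$. I would first prove that $\Pi_\varphi$ is closed under images and cokernels of its own morphisms and that it is a length exact category. Then the standard construction of torsion pairs generated by a class goes through, using finite length to terminate the sums of images.
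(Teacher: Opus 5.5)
The paper gives no proof of its own here; the result is quoted from \cite{CLR}, so I judge your argument on its merits. Your direction ``Hom condition $\Rightarrow$ finest'' is correct and complete. The architecture of the converse is also sound: split one $\Pi_\varphi$ by a torsion pair, collapse the two new phases via $r$, and get uniqueness from uniqueness of canonical sequences.

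\textbf{The gap.} It sits at exactly the step you call the crux. The lemma you plan to prove first, that $\Pi_\varphi$ is closed under cokernels of its own morphisms, is false. Lemma~\ref{lem:phase in exact sequence} cannot give it: for a quotient $C$ of $B\in\Pi_\varphi$ it bounds $\phi^-(C)$ from below but says nothing about $\phi^+(C)$.

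A counterexample: take the quiver $1\to 2$ and $P_1=(K\to K)$, so that $0\to S_2\to P_1\to S_1\to 0$ is exact. Then $\Phi=\{\alpha>\varphi\}$ with $\Pi_\alpha=\add S_1$ and $\Pi_\varphi=\add\{S_2,P_1\}$ is a stability data; it is just the torsion pair $(\add S_1,\add\{S_2,P_1\})$. Yet the cokernel $S_1$ of the monomorphism $S_2\hookrightarrow P_1$ lies in $\Pi_\alpha$. Hence $Z/tZ\in\Pi_\varphi$, and also $Z/tZ\in\mathcal{F}'$ (which you never check), must come from the specific choice of $tZ$, not from phase squeezing.

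The same example shows your first choice of $\mathcal{T}'$ really fails. Add an isolated vertex $3$ and put $S_3$ into $\Pi_\varphi$. For $X=S_2$, $Y=S_3$, the closure of $X$ is $\add S_2$, and $P_1\in\Pi_\varphi$ has no canonical sequence inside $\Pi_\varphi$.

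\textbf{Repair.} Your second choice $\mathcal{F}'=\{Z\in\Pi_\varphi\mid\hom_{\cA}(X,Z)=0\}$, $\mathcal{T}'={}^{\perp}\mathcal{F}'\cap\Pi_\varphi$ does work, but through maximality of $tZ$.

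Suppose $\phi^+(Z/tZ)>\varphi$. Let $C_1$ be the maximal semistable subobject of $Z/tZ$ and $Z_1\subseteq Z$ its preimage. If $tZ=0$ this is already a contradiction; otherwise Lemma~\ref{lem:phase in exact sequence} puts $Z_1$ in $\Pi_\varphi$. Any map $Z_1\to F$ with $F\in\mathcal{F}'$ kills $tZ$, so it factors through $C_1$, so it vanishes because $C_1$ has phase $>\varphi$. Thus $Z_1\in\mathcal{T}'$ and $Z_1\subseteq tZ$, a contradiction.

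Likewise, a nonzero image $I$ of a map $X\to Z/tZ$ lies in $\Pi_\varphi$ with $\hom_{\cA}(I,\mathcal{F}')=0$. Its preimage then lies in $\mathcal{T}'$, contradicting maximality, so $Z/tZ\in\mathcal{F}'$.

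A cleaner alternative avoids torsion theory inside the non-abelian category $\Pi_\varphi$; it is the construction from the proof of Theorem~\ref{thm:mgs and finest stability data}. Set $\cT_{>\varphi}=\filt\{\Pi_\psi\mid\psi>\varphi\}$ and $\cT_{\ge\varphi}=\filt\{\Pi_\psi\mid\psi\ge\varphi\}$. The torsion class $\mathcal{V}=\filt(\fac(\cT_{>\varphi}\cup\{X\}))$ satisfies $\cT_{>\varphi}\subsetneq\mathcal{V}\subseteq\cT_{\ge\varphi}\cap{}^{\perp}Y$. Put $P_{\varphi_1}=\mathcal{V}\cap\cT_{>\varphi}^{\perp}\ni X$ and $P_{\varphi_2}=\cT_{\ge\varphi}\cap\mathcal{V}^{\perp}\ni Y$. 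Every $Z\in\Pi_\varphi$ is split by its canonical sequence for $(\mathcal{V},\mathcal{V}^{\perp})$, and membership of the two terms is automatic because torsion-free classes are closed under subobjects and torsion classes under quotients.

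\textbf{Two small points.} Non-equivalence of the refinement needs one line: distinct semistable subcategories meet only in $0$, so an equivalence would force $\mathcal{T}'=\Pi_\varphi\ni Y$, which is impossible as $Y\in\mathcal{F}'$. Also, finiteness of the sum of images uses finite length. The paper's standing convention on $\cA$ provides this, but the literal hypothesis ``abelian category'' does not.
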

	
	\subsection{On lattice theory}
	
	In this section we recall the lattice‑theoretic notions that will be used throughout the paper, especially in the study of torsion classes and maximal green sequences. For a comprehensive treatment we refer to \cite{DP}.
	
	\begin{definition}
		A \textbf{partially ordered set} (or \textbf{poset}) is a set $P$ equipped with a binary relation $\le$ that is reflexive, anti-symmetric and transitive.
	\end{definition}
	
	\begin{definition}
		A poset $L$ is a \textbf{lattice} if every pair of elements $x,y\in L$ has a supremum (join) $x\vee y$ and an infimum (meet) $x\wedge y$. A lattice $L$ is \textbf{complete} if every subset $S\subseteq L$ has a supremum $\bigvee S$ and an infimum $\bigwedge S$ in $L$ (indeed, the existence of all suprema implies the existence of all infima, and vice versa).
	\end{definition}
	
	In a complete lattice $L$ we denote the least element by $\bot_L$ and the greatest element by $\top_L$. If there is no ambiguity, we simply write them as $\bot$ and $\top$. For the empty set, we adopt the convention $\bigvee \emptyset = \bot$ and $\bigwedge \emptyset = \top$.
	
	\begin{definition}
		\begin{enumerate}
			\item An element $c$ of a complete lattice $L$ is \textbf{compact} if for every $S\subseteq L$ with $c\le \bigvee S$, there exists a finite subset $S_0\subseteq S$ such that $c\le \bigvee S_0$.  The lattice $L$ is called \textbf{algebraic} if every element is the join of some compact elements.
			\item Dually, an element $d$ of $L$ is \textbf{co-compact} if for every $S\subseteq L$ with $\bigwedge S\le d$, there exists a finite subset $S_0\subseteq S$ such that $\bigwedge S_0\le d$. The lattice $L$ is called \textbf{co-algebraic} if every element is the meet of some co-compact elements.
		\end{enumerate}
		If $L$ is both algebraic and co-algebraic, it is called \textbf{bialgebraic}.
	\end{definition}
	
	\begin{definition}
		Let $L$ be a complete lattice.
		
		(1) An element $x\in L$ is \textbf{completely join‑irreducible} if whenever $x=\bigvee S$ for some $S\subseteq L$, then $x\in S$.
		
		(2) An element $x\in L$ is \textbf{completely meet‑irreducible} if whenever $x=\bigwedge S$ for some $S\subseteq L$, then $x\in S$.
	\end{definition}
	
	Note that $\bot$ is not completely join-irreducible and $\top$ is not completely meet-irreducible since $\bigvee \emptyset = \bot$ and $\bigwedge \emptyset = \top$.
	
	\begin{definition}
		Let $P$ be a partially ordered set.
		\begin{enumerate}
			\item A non-empty subset $I\subseteq P$ is an \textbf{ideal} if it is downward closed and directed: for any $x,y\in I$ there exists $z\in I$ with $x,y\le z$.
			\item A non-empty subset $F\subseteq P$ is a \textbf{filter} if it is upward closed and filtered: for any $x,y\in F$ there exists $z\in F$ with $z\le x,y$.
		\end{enumerate}
		Denote by $\ideal(P)$ and $\filter(P)$ the sets of ideals and filters, and set $\ideal_0(P)=\ideal(P)\cup\{\emptyset\}$, $\filter_0(P)=\filter(P)\cup\{\emptyset\}$.
		
		For $p\in P$, $\downarrow p = \{x\mid x\le p\}$ and $\uparrow p = \{x\mid p\le x\}$ are the \textbf{principal ideal} and \textbf{principal filter} generated by $p$. For $A\subseteq P$, $\downarrow A$ (resp. $\uparrow A$) is the smallest ideal (resp. filter) containing $A$, given by $\{x\mid \exists a\in A,\ x\le a\}$ (resp. $\{x\mid \exists a\in A,\ a\le x\}$).
	\end{definition}
	
	\begin{remark}
		In the next section, we will mainly consider filters on totally ordered sets. In that case, a subset is a filter precisely when it is upward closed.
	\end{remark}
	
	The following proposition shows that every complete algebraic lattice can be recovered from its poset of compact elements. In the next section, we only need a special case of this proposition. However, we'll prove the general version anyway.
	
	\begin{proposition}\label{prop:realization of ideal}
		Let $L$ be a complete algebraic lattice, then there exists a unique (up to isomorphism) poset $P$ such that $L\cong \ideal(P)$ as a complete lattice. To be more precise, $P$ can be taken as $K(L)$ and the map 
		\begin{equation}
			\phi:L\rightarrow \ideal(K(L)), \quad \phi(x)=\{c\in K(L)\ |\ c\leq x\}
		\end{equation}
		is a complete lattice isomorphism.
	\end{proposition}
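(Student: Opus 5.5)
We prove that $\phi$ is a well-defined order isomorphism onto $\ideal(K(L))$. Since an order isomorphism between complete lattices automatically preserves all joins and meets, this also makes $\phi$ a complete lattice isomorphism. Uniqueness of $P$ is handled separately at the end by recovering $P$ intrinsically from $\ideal(P)$.

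\emph{Well-definedness and order.} For $x\in L$ the set $\phi(x)=\{c\in K(L)\mid c\le x\}$ is nonempty, since $\bot$ is compact (it lies below $\bigvee\emptyset$). It is downward closed in $K(L)$. It is directed, because a finite join of compact elements is compact: if $c_1\vee c_2\le\bigvee S$, each $c_i$ lies below a finite subjoin, and we take the union of the two finite subsets. So $\phi(x)$ is an ideal. Clearly $x\le y$ implies $\phi(x)\subseteq\phi(y)$. Conversely, algebraicity gives $x=\bigvee\phi(x)$. Hence $\phi(x)\subseteq\phi(y)$ implies $x\le y$. So $\phi$ is an order embedding, and in particular injective.

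\emph{Surjectivity (main step).} Given an ideal $I\subseteq K(L)$, put $x=\bigvee I$. Certainly $I\subseteq\phi(x)$. For the reverse inclusion, let $c\in\phi(x)$, so $c$ is compact and $c\le\bigvee I$. Compactness gives a finite $I_0\subseteq I$ with $c\le\bigvee I_0$. Directedness of $I$ provides $d\in I$ with $I_0\le d$; if $I_0=\emptyset$, use $\bot\in I$, which holds because $I$ is nonempty and downward closed. Then $c\le d$, and downward closure gives $c\in I$. Therefore $\phi(x)=I$. Combined with the previous paragraph, $\phi$ is an order isomorphism with inverse $I\mapsto\bigvee I$. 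The joins and meets in $\ideal(K(L))$ are then the transported ones: meets are intersections, and the join of a family is the ideal generated by its union.

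\emph{Uniqueness.} Suppose $L\cong\ideal(P)$. We show $P\cong K(\ideal(P))$ via $p\mapsto\downarrow p$, so $P$ is determined by $L$ up to isomorphism.
\begin{itemize}
\item[(i)] Each $\downarrow p$ is compact. A directed union of ideals is an ideal, so joins of directed families in $\ideal(P)$ are unions. A general join is the directed join of its finite subjoins. Hence if $\downarrow p\le\bigvee S$, then $p$ lies in some finite subjoin, and so $\downarrow p$ is below a finite subjoin.
\item[(ii)] Every compact ideal $I$ is principal. We have $I=\bigcup_{p\in I}\downarrow p$, a directed union. Compactness puts $I$ inside a finite subjoin, and directedness puts that subjoin inside a single $\downarrow p$ with $p\in I$. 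So $I=\downarrow p$.
\end{itemize}
Finally, $p\le q$ if and only if $\downarrow p\subseteq\downarrow q$. So $p\mapsto\downarrow p$ is an isomorphism $P\cong K(\ideal(P))\cong K(L)$.

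The one subtle point is the bookkeeping in (i), namely that joins in $\ideal(P)$ of directed families are unions; the rest is formal.
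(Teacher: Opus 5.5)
Your proof is correct and follows essentially the same route as the paper. You show that $\phi(x)$ is an ideal, use $x=\bigvee\phi(x)$ for injectivity, combine compactness with directedness of $I$ to get $\phi(\bigvee I)=I$, and recover $P$ as the compact (equivalently principal) ideals of $\ideal(P)$. The only differences are these: you get preservation of all joins and meets for free from $\phi$ being an order isomorphism, instead of checking $\phi(\bigvee S)=\bigvee\phi(S)$ directly as the paper does, and you spell out the nonemptiness ($\bot\in K(L)$), empty-subjoin and directed-union details that the paper leaves implicit.
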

	
	\begin{proof}
		The proof of existence consists of three parts.
		
		Firstly, for any $x \in L$, the set $\phi(x)$ is closed downward. For any $c_1, c_2 \in \phi(x)$, their join $c_1 \vee c_2$ is compact and satisfies $c_1 \vee c_2 \leq x$, hence $c_1 \vee c_2 \in \phi(x)$. Therefore, $\phi(x)$ is an ideal.
		
		Next we show that $\phi$ is a complete lattice homomorphism. By construction, $\phi$ is order-preserving. Note that the meet of ideals is given by the intersection of sets, so the meet preservation is immediate. For join-preservation, consider $S \subset L$. We need to verify the middle equality in 
		\begin{equation}
			\phi\left(\bigvee S\right) 
			= \left\{ c \in K(L) \mid c \leq \bigvee S \right\}
			= \bigvee_{x \in S} \left\{ c \in K(L) \mid c \leq x \right\}
			= \bigvee \phi(S).
		\end{equation}
		The containment ``$\supseteq$'' is obvious. For ``$\subseteq$'', suppose $c \leq \bigvee S$. By the algebraicity of $L$ and compactness of $c$, there exists a finite set $\{c_1,\ldots,c_n\}$ of compact elements such that $c \leq c_1 \vee \cdots \vee c_n$, where each $c_i \leq x_i$ for some $x_i \in S$. This establishes the required containment.
		
		Finally, we prove that $\phi$ is bijective. If $\phi(x) = \phi(y)$, then by algebraicity we have
		\begin{equation}
			x = \bigvee \phi(x) = \bigvee \phi(y) = y,
		\end{equation}
		so $\phi$ is injective. For any ideal $I \in \ideal(K(L))$, let $x = \bigvee I$ (where the join is taken in $L$). Therefore $\phi(x) = I$ by the same reasoning as in the join preservation proof and hence $\phi$ is surjective.
		
		For uniqueness, suppose $\ideal(P)\cong L$, we want to show that $P=K(L)$. Consider the inclusion map of posets:
		\begin{equation}
			\iota:P\rightarrow \ideal (P), \quad \iota(p)=\downarrow p.
		\end{equation}
		We only need to show that the principle ideals are exactly the compact elements of $\ideal(P)$. Trivially, the principle ideals are compact. Suppose $I\in \ideal(P)$ is compact. Since $I=\bigvee\{\downarrow p\ |\ p\in I\}$, there exists $p_1,\cdots ,p_n\in I$ such that $I=\downarrow p_1\vee\cdots \vee \downarrow p_n$. By the directedness of $I$, there exists $p\in I$ such that $p_i\leq p$ for $1\leq i\leq n$. Then we conclude that $I=\downarrow p$ is a principle ideal.
		
		This completes the proof.
	\end{proof}
	
	\begin{remark}\label{rmk:realization of extension ideal}
		Keep the notations above and let $\bot$ be the minimal element of $L$. Note that the map
		\begin{equation}
			\ideal (K(L))\rightarrow \ideal_0(K(L)\backslash\{\bot\}), \quad I\mapsto I\backslash\{\bot\}
		\end{equation}
		is an isomorphism of complete lattices. Therefore, we have $L\cong \ideal_0(K(L)\backslash \{\bot\})$, and the poset $K(L)\backslash \{\bot\}$ is uniquely determined up to isomorphism.
	\end{remark}
	
	\section{Stability approach to abelian categories}\label{sec:correspondence}
	Given an abelian category $\cA$, let $\tors \cA$ denote the set of torsion classes in $\cA$. It is shown in \cite{DIRRT} that $\tors \cA$ forms a complete lattice. For a nonempty subset $S \subseteq \tors \cA$, the meet $\bigwedge S$ is given by the intersection of the torsion classes in $S$, while the join $\bigvee S$ is the smallest torsion class containing all members of $S$.
	
	The authors in \cite{DK} considered maximal chains of torsion classes in $\tors \cA$. A \textbf{maximal green sequence} (possibly infinite) is a maximal chain of torsion classes with respect to this order. A chain of torsion classes is a totally ordered subposet of $\tors \cA$, and chains are ordered by inclusion. In this section we will discuss two important notions related to maximal green sequences: stability data and complete backward $\hom$-orthogonal sequences, and investigate their relationships in the infinite length setting.

	\subsection{Correspondence between the finest stability data and the maximal green sequence}
	
	Firstly, we give an equivalent condition to characterize the maximal green sequences.
	
	\begin{proposition}\label{prop:equivalence condition for mgs}
		A subset $\cS=\left\{\cT_{\alpha}\right\}_{\alpha\in J}$ of ${\rm tors}\cA$ is a maximal green sequence if and only if $\cS$ is a complete totally ordered set with respect to inclusion and satisfies:
		\begin{enumerate}
			\item $\bigwedge \{\cT_{\alpha}\mid \alpha\in J\}=\left\{0\right\}$ and $\bigvee \{\cT_{\alpha}\mid \alpha\in J\}=\cA$;
			\item If $\cT_{\alpha_0}$ is completely join-irreducible, then $ \cT_{\alpha_0}$ covers $\bigvee \left\{\cT_{\alpha}\mid \alpha<{\alpha_0}\right\}$ in ${\rm tors}\cA$;
		\end{enumerate}
		
		The notations $\bigvee$ and $\bigwedge$ above are the join and meet in $\cS$ respectively.
	\end{proposition}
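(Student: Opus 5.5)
We will treat $\cS$ as a chain in the complete lattice $\tors\cA$ and use only two facts about it: meets in $\tors\cA$ are intersections, and the join in $\tors\cA$ of a \emph{chain} of torsion classes is just their union. The second holds because a union of a chain of subcategories closed under quotients and extensions is again closed under both: any two objects lie in a common member of the chain. The one real subtlety is that the statement computes $\bigvee,\bigwedge$ inside $\cS$. The plan is to show, in the forward direction, that these agree with the joins and meets in $\tors\cA$. In the backward direction I will read ``complete'' as ``closed under the joins and meets of $\tors\cA$'', which the forward direction shows is the property that actually holds.

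($\Rightarrow$) Let $\cS$ be a maximal chain. For any subset $\cS'\subseteq\cS$, let $\cX$ be its join in $\tors\cA$. Every $\cT\in\cS$ is comparable with $\cX$. Either $\cT$ contains every member of $\cS'$, in which case $\cT\supseteq\cX$. Or $\cT\subsetneq\cT'$ for some $\cT'\in\cS'$, in which case $\cT\subseteq\cX$. So $\cS\cup\{\cX\}$ is a chain, and maximality gives $\cX\in\cS$. The same argument applies to meets. Hence $\cS$ is closed under arbitrary joins and meets of $\tors\cA$, so it is complete and its internal joins and meets coincide with those in $\tors\cA$. Taking $\cS'=\emptyset$, or adjoining $\{0\}$ and $\cA$ directly, gives (1). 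For (2), let $\cT_{\alpha_0}$ be completely join-irreducible and set $\mathcal U=\bigvee\{\cT_\alpha\mid\alpha<\alpha_0\}$, so $\mathcal U\subsetneq\cT_{\alpha_0}$. Suppose some torsion class $\cX$ satisfies $\mathcal U\subsetneq\cX\subsetneq\cT_{\alpha_0}$. Every element of $\cS$ is either $\subseteq\mathcal U$ or $\supseteq\cT_{\alpha_0}$, so $\cX$ is comparable with all of $\cS$ but not in it. This contradicts maximality, so $\cT_{\alpha_0}$ covers $\mathcal U$.

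($\Leftarrow$) Suppose $\cS$ satisfies the conditions and $\cX\in\tors\cA$ is comparable with every member of $\cS$; we must show $\cX\in\cS$. Put $D=\{\cT\in\cS\mid\cT\subseteq\cX\}$ and $E=\{\cT\in\cS\mid\cX\subseteq\cT\}$. By (1) and completeness, $\{0\}\in D$ and $\cA\in E$, so both are nonempty. Let $\mathcal U=\bigvee D=\bigcup D\subseteq\cX$ and $\mathcal V=\bigwedge E=\bigcap E\supseteq\cX$; both lie in $\cS$. Then $\mathcal U\in D$ and $\mathcal V\in E$, so $\mathcal U=\max D$ and $\mathcal V=\min E$. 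If $\cX$ equals either one we are done. Otherwise $\mathcal U\subsetneq\cX\subsetneq\mathcal V$. Since $\cS$ is totally ordered, the elements of $\cS$ strictly below $\mathcal V$ are exactly $D$, whose join is $\mathcal U\ne\mathcal V$. Hence $\mathcal V$ is completely join-irreducible in $\cS$, and (2) says $\mathcal V$ covers $\mathcal U$ in $\tors\cA$, contradicting the existence of $\cX$.

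The main obstacle is the step $\bigvee D=\bigcup D$. In a chain that is merely complete as an abstract poset, the internal join of $D$ could be strictly larger than the union; this is why I use the reading of completeness above. If the intended reading is the weaker one, I would first try to derive closure under unions of chains from (1) and (2) together with the length hypothesis. Concretely, let $\mathcal W$ be the $\cS$-join of $D$ and suppose $\mathcal W\supsetneq\bigcup D$. Take an object of minimal length in $\mathcal W\setminus\bigcup D$, and try to produce a torsion class strictly between $\bigcup D$ and $\mathcal W$ that is comparable with $\cS$, contradicting (2) applied to the appropriate element. I expect this part to need the most care.
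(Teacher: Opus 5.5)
Your proof is correct and follows the paper's route. In the ``if'' direction the paper likewise takes the $\cS$-join of the members below the added class and the $\cS$-meet of those above it (your $\mathcal{U}$ and $\mathcal{V}$), notes that the latter is completely join-irreducible in $\cS$, and contradicts (2). The ``only if'' direction, which the paper calls obvious, is your closure argument. The subtlety you isolate is genuine: the paper's chain $\bigvee\{\cT_\alpha\mid\cT_\alpha\subsetneq\cT_0\}\subsetneq\cT_0\subsetneq\bigwedge\{\cT_\beta\mid\cT_\beta\supsetneq\cT_0\}$ silently assumes that $\cS$-joins and $\cS$-meets are unions and intersections, which is exactly your reading of ``complete''.

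Your fallback for the weaker reading cannot work, because under that reading the ``if'' direction is false, even in an abelian length category. Suppose a maximal green sequence $\cS$ contains a class $\mathcal{X}\neq\{0\}$ that is the union of the members of $\cS$ strictly below it, but has an immediate successor $\mathcal{V}$ in $\cS$. Put $\cS'=\cS\setminus\{\mathcal{X}\}$.
\begin{enumerate}
\item $\cS'$ is still complete as an abstract chain. A subset whose $\cS$-supremum was $\mathcal{X}$ now has $\cS'$-supremum $\mathcal{V}$. No subset of $\cS'$ has $\cS$-infimum $\mathcal{X}$, since $\mathcal{V}$ is a lower bound for it.
\item $\cS'$ has the same least and greatest elements as $\cS$.
\item $\cS'$ satisfies (2) computed in $\cS'$. $\mathcal{V}$ is now the $\cS'$-join of the members below it, so it is no longer completely join-irreducible. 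For every other member, the $\cS'$-join of the members below it equals the $\cS$-join, so (2) for $\cS'$ follows from (2) for $\cS$.
\end{enumerate}
Yet $\cS'\cup\{\mathcal{X}\}$ is a chain, so $\cS'$ is not maximal.

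Such an $\cS$ exists. In the Kronecker example (part (2)) at the end of Section~\ref{sec:correspondence}, choose $\preceq$ with a greatest element $\lambda_0$. Let $\mathcal{X}=\cT_{F_0}$ with $F_0=\{(1,-n)\mid n\ge 1\}$. It is the union of the classes $\cT_{\{(1,-1),\dots,(1,-n)\}}$. Its immediate successor is $\cT_{\uparrow\lambda_0}$, because every filter properly containing $F_0$ contains $\lambda_0$.

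So the proposition genuinely needs your stronger hypothesis: $\cS$ must be closed under intersections and unions of arbitrary subfamilies.
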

	
	\begin{proof}
		The ``only if'' part is obvious.
		
		For the ``if'' part, suppose that $\cS=\left\{\cT_\alpha\right\}_{\alpha\in J}$ is a complete totally ordered set and satisfies (1) and (2). If $\cS'=\cS\amalg \left\{\cT_0\right\}$ is still a chain of torsion class, then we have the following relation:
		\begin{equation}
			\cT_{\alpha_0}=\bigvee\left\{\cT_{\alpha}\ |\ \cT_{\alpha}\subsetneq \cT_0,\ \alpha\in J\right\}\subsetneq\cT_0\subsetneq\bigwedge \left\{\cT_{\beta}\ |\ \cT_{\beta}\supsetneq \cT_0,\ \beta\in J\right\}=\cT_{\beta_0}.
		\end{equation}
		By definition, $\cT_{\alpha_0},\cT_{\beta_0}\in \cS$ and $\cT_{\beta_0}$ is completely join-irreducible. However, $\cT_{\beta_0}$ does not cover $\cT_{\alpha_0}$ which is a contradiction and we finish the proof.
	\end{proof}
	
	\begin{remark}
		\begin{enumerate}
			\item The condition (2) above can be replaced by the dual one, i.e. if $\cT_{\beta_0}$ is completely meet-irreducible, then $\bigwedge \left\{\cT_{\beta}\mid \beta>\beta_0\right\}$ covers $\cT_{\beta_0}$ in ${\rm tors}\cA$. Proposition \ref{prop:equivalence condition for mgs} can be viewed as an equivalent definition of maximal green sequence.
			\item If the index set $J$ of the maximal green sequence is finite, then our definition corresponds with the classical maximal green sequence since any finite totally ordered set is complete and any non-trivial $\cT_{\alpha}$ is both completely meet-irreducible and completely join-irreducible.
		\end{enumerate}
	\end{remark}
	
	Note that if $\cS=\left\{\cT_{\alpha}\right\}_{\alpha\in J}$ is a maximal green sequence, then the index set $J$ is also a complete totally ordered set whose order is given by that of $\cS$.
	
	Let $\left(\Phi,\left\{\Pi_{\varphi}\right\}_{\varphi \in \Phi}\right)$ be a stability data where $\Phi$ is a totally ordered set. Suppose that $F$ is a filter of $\Phi$. Define $F^c=\Phi\backslash F$. Obviously, $F^c$ is an ideal of $\Phi$. The following two simple lemmas have been established elsewhere in similar forms (see \cite[Proposition 2.8]{CLR} and \cite[Proposition 2.10]{Tr}). For convenience and consistency of notation, we restate them here and provide brief proofs.
	
	\begin{lemma}
		Suppose $\left(\Phi,\left\{\Pi_{\varphi}\right\}_{\varphi \in \Phi}\right)$ is a stability data on an abelian category $\cA$. Then any filter $F\subset \Phi$ can naturally induce a torsion pair $(\cT_{F}, \cF_{F})$ where 
		\begin{equation}
			\cT_{F}=\filter\left\{\Pi_{\varphi}\ |\ \varphi \in F\right\},\quad \cF_{F}=\filter\left\{\Pi_{\varphi} \ | \ \varphi\in F^c\right\}.
		\end{equation}
	\end{lemma}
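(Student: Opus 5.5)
We need to show that $\cT_F=\filt\{\Pi_\varphi\mid\varphi\in F\}$ and $\cF_F=\filt\{\Pi_\varphi\mid\varphi\in F^c\}$ form a torsion pair. (Here the macro $\filter$ is read as $\filt$, the extension closure.) The plan is to check the two axioms of a torsion pair directly from the definition of stability data. The Hom-vanishing is immediate, and the canonical sequence will come from cutting the HN-filtration at the boundary between $F$ and $F^c$.

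\emph{Step 1: Hom-vanishing.} Take $X\in\cT_F$ and $Y\in\cF_F$. Since $F$ is a filter in the totally ordered set $\Phi$, it is upward closed. Hence every $\varphi'\in F$ and $\varphi''\in F^c$ satisfy $\varphi'>\varphi''$, so axiom (1) of stability data gives $\hom(\Pi_{\varphi'},\Pi_{\varphi''})=0$. Both $X$ and $Y$ admit finite filtrations with factors in such $\Pi$'s. A standard dévissage, using left exactness of $\hom(-,Y)$ and $\hom(X,-)$ along short exact sequences and induction on the filtration lengths, then gives $\hom(X,Y)=0$.

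\emph{Step 2: the canonical sequence.} Let $0\neq X\in\cA$ have HN-filtration $0=X_0\subset\cdots\subset X_n=X$ with factors $A_i\in\Pi_{\varphi_i}$ and $\varphi_1>\cdots>\varphi_n$. Because $F$ is upward closed and the phases strictly decrease, there is an index $m$ with $0\le m\le n$ such that $\varphi_i\in F$ exactly for $i\le m$. Set $tX:=X_m$. Then $X_m$ is filtered by $A_1,\dots,A_m$, so $X_m\in\cT_F$. By the third isomorphism theorem, $X/X_m$ is filtered by the subquotients $X_i/X_m$ for $i\ge m$, whose successive factors are $A_{m+1},\dots,A_n$. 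Hence $X/X_m\in\cF_F$, and $0\to X_m\to X\to X/X_m\to0$ is the required sequence. The case $X=0$ is trivial.

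\emph{Step 3: closure properties.} By construction $\cT_F$ and $\cF_F$ are extension-closed, so the two axioms of Steps 1 and 2 are all that is needed. If one also wants the identifications $\cF_F=\cT_F^\perp$ and $\cT_F={}^\perp\cF_F$, they follow by the usual argument. For instance, if $Y\in\cT_F^\perp$, then in its canonical sequence the map $tY\to Y$ is zero, so $tY=0$ and $Y\cong Y/tY\in\cF_F$; the other identification is dual.

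No step is a real obstacle. The only point needing care is the bookkeeping in Step 2: one must check that the quotient filtration of $X/X_m$ has exactly the factors $A_{m+1},\dots,A_n$, which is the third isomorphism theorem. The argument also relies on $F$ being upward closed in the \emph{totally} ordered $\Phi$, so that the cut in the HN-filtration is a single index $m$.
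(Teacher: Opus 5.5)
Your proof is correct and takes essentially the same route as the paper. Both get Hom-vanishing from axiom (1) of the stability data (the paper calls this immediate; you justify it by d\'evissage), and both obtain the canonical sequence by cutting the HN-filtration at the last phase lying in $F$. Your indexing with $0\le m\le n$ also handles the boundary cases more cleanly than the paper does.
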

	
	\begin{proof}
		By definition, we have $\hom_{\cA}(\cT_F, \cF_{F})=0$. Take any object $X\in \cA $. Consider the HN-filtration of $X$ with respect to $\left(\Phi, \left\{\Pi_{\varphi}\right\}_{\varphi\in \Phi}\right)$:
		\begin{equation}
			0=X_0\hookrightarrow X_1\hookrightarrow\cdots \hookrightarrow X_{n-1}\hookrightarrow X_n=X.
		\end{equation}
		There exists a unique $1\leq k\leq n-1$ such that $\varphi_k\in F$ and $\varphi_{k+1}\notin F$ since $F$ is a filter (The case that $\varphi_n\in F$ or $\varphi_1\notin F$ is trivial). Therefore, $X_{k}\in \cT_{F}$, $X/X_{k}\in \cF_{F}$ and hence the canonical sequence of $X$ is $0\to X_{k}\to X\to X/X_{k}\to 0$. We conclude that $(\cT_{F}, \cF_{F})$ is a torsion pair.
	\end{proof}
	
	For convenience, we define $\cT_{\emptyset}=\cF_{\emptyset}=\left\{0\right\}$. $\filter_0(\Phi)$ is a complete totally ordered set with respect to inclusion in which join is given by union and meet is given by intersection.
	
	Therefore, a stability data can induce a set of torsion classes:
	\begin{equation}\label{eq:torsion class induced by stability data}
		\cS_{\Phi}=\left\{\cT_{F}\ |\ F\in \filter_0(\Phi)\right\}.
	\end{equation}
	Note that $\cS_{\Phi}$ has a linear order with respect to inclusion. By our construction, for any $F_1,\ F_2\in \filter(\Phi)$, $F_1\subsetneq F_2$ implies $\cT_{F_1}\subsetneq \cT_{F_2}$ and $\cF_{F_1}\supsetneq \cF_{F_2}$ since each $\Pi_{\varphi}$ is non-zero.
	
	\begin{lemma}\label{lem:torsion pair induced by stability data}
		Suppose $(\Phi , \left\{\Pi_{\varphi}\right\}_{\varphi\in \Phi})$ is a stability data on an abelian length category $\cA$. $\cS_{\Phi}$ is the set of torsion classes as defined in \eqref{eq:torsion class induced by stability data}. Then for any $\cJ\subset \filter_0(\Phi)$, 
		\begin{equation}
			\left(\bigcup_{F\in \cJ} \cT_{F}, \bigcap_{F\in \cJ}\cF_{F}\right)
		\end{equation}
		and
		\begin{equation}
			\left(\bigcap_{F\in \cJ} \cT_{F}, \bigcup_{F\in \cJ}\cF_{F}\right)
		\end{equation}
		are torsion pairs. Furthermore, $\bigcup_{F\in \cJ} \cT_{F}$ and $\bigcap_{F\in \cJ} \cT_{F}$ are in $\cS_{\Phi}$.
	\end{lemma}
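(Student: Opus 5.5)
The plan is to reduce everything to a single filter: for a family $\cJ\subset\filter_0(\Phi)$, set $F_\cup=\bigcup_{F\in\cJ}F$ and $F_\cap=\bigcap_{F\in\cJ}F$. Both are again in $\filter_0(\Phi)$, because $\Phi$ is totally ordered and an upward-closed subset is a filter (or empty). I will then prove the identities
\begin{equation}
\bigcup_{F\in\cJ}\cT_F=\cT_{F_\cup},\quad \bigcap_{F\in\cJ}\cF_F=\cF_{F_\cup},\quad \bigcap_{F\in\cJ}\cT_F=\cT_{F_\cap},\quad \bigcup_{F\in\cJ}\cF_F=\cF_{F_\cap}.
\end{equation}
The previous lemma already shows that $(\cT_{F_\cup},\cF_{F_\cup})$ and $(\cT_{F_\cap},\cF_{F_\cap})$ are torsion pairs. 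So both claimed pairs are torsion pairs, and the torsion classes lie in $\cS_\Phi$. The degenerate case $\cJ=\emptyset$ is handled by the usual conventions and I will just check it separately.

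The tool I will use is the HN-filtration characterization of membership. I claim that a nonzero $X$ lies in $\cT_F$ if and only if all its HN-phases lie in $F$, that is, $\phi^-(X)\in F$. Dually, $X\in\cF_F$ if and only if $\phi^+(X)\notin F$.

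For the direction ``if'', the HN-filtration itself exhibits $X$ as an iterated extension of objects of $\Pi_{\varphi_i}$ with $\varphi_i\in F$. For the converse, suppose $X\in\filt\{\Pi_\varphi\mid\varphi\in F\}$. By induction on the length of a filtration, using Lemma \ref{lem:phase in exact sequence}(2), one gets $\phi^-(X)\ge\min$ of the phases of the pieces, and these pieces lie in $F$. Since $F$ is upward closed, $\phi^-(X)\in F$. The torsion-free case is symmetric, using $\phi^+$ and the fact that $F^c$ is downward closed.

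With this characterization the identities become statements about the single phase $\phi^\pm(X)$.

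For the first identity, take $X$ with $\phi^-(X)\in F_\cup$. Then $\phi^-(X)\in F$ for some $F\in\cJ$, so $X\in\cT_F$. The reverse inclusion is immediate.

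For the second identity, $X\in\bigcap\cF_F$ means $\phi^+(X)\notin F$ for every $F\in\cJ$. This is exactly $\phi^+(X)\notin F_\cup$.

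For the third identity, $X\in\bigcap\cT_F$ means $\phi^-(X)\in F$ for all $F\in\cJ$, which is exactly $\phi^-(X)\in F_\cap$.

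For the fourth identity, $X\in\bigcup\cF_F$ means $\phi^+(X)\notin F$ for some $F\in\cJ$. This is exactly $\phi^+(X)\notin F_\cap$.

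The zero object needs only the trivial remark that it lies in every class.

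The main obstacle is the membership characterization, specifically the converse direction. An arbitrary object of $\filt\{\Pi_\varphi\mid\varphi\in F\}$ is built by some extension filtration rather than its HN-filtration, so I need Lemma \ref{lem:phase in exact sequence} to bound the phases of extensions. The key point is that this bound is in terms of a single phase. This is precisely what makes infinite unions and intersections harmless: no limiting argument over the filters in $\cJ$ is needed, because each object has only finitely many HN-phases. The length hypothesis on $\cA$ is only used implicitly, through the existence of HN-filtrations supplied by the stability data.
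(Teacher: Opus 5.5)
Your proposal is correct and follows essentially the paper's route: reduce to the single filter $\bigcup_{F\in\cJ}F$ (resp.\ $\bigcap_{F\in\cJ}F$), identify the union or intersection of torsion classes with $\cT$ of that filter via the criterion $X\in\cT_F\iff\phi^-(X)\in F$, and then invoke the preceding lemma. The differences are minor. You justify that criterion explicitly through Lemma~\ref{lem:phase in exact sequence}(2), which the paper uses tacitly. You also obtain $\bigcap_{F\in\cJ}\cF_F=\cF_{F_\cup}$ from the dual $\phi^+$ criterion, whereas the paper gets it by Hom-orthogonality against $\cT_{F_\cup}=\bigcup_{F\in\cJ}\cT_F$.
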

	
	\begin{proof}
		We only prove the first one and the other one is similar. 
		
		Firstly, we show that $\bigcup_{F\in \cJ} \cT_{F}$ is a torsion class. Let $\bar{F}:= \bigcup_{F\in \cJ} F\in \filter_0(\Phi)$. We only need to show that $\bigcup_{F\in \cJ} \cT_{F}=\cT_{\bar{F}}$. By our construction, we have $\bigcup_{F\in \cJ} \cT_{F}\subset\cT_{\bar{F}}$. For $X\in \cT_{\bar{F}}$, we have $\phi^-(X)\in \bar{F}$. Then $\phi^-(X)\in F_0$ for some $F_0\in \cJ$ and hence $X\in \cT_{F_0}$ since $F_0$ is a filter. 
		
		By definition, $(\bigcup_{F\in \cJ}\cT_{F}, \cF_{\bar{F}})$ is a torsion pair. We show $\bigcap_{F\in \cJ} \cF_F=\cF_{\bar{F}}$ by double inclusion. $\cF_{\bar{F}}\subset \bigcap_{F\in \cJ} \cF_{F}$ is obvious. For $Y\in \bigcap_{F\in \cJ}\cF_{F}$ and $X\in \cT_{\bar{F}}=\bigcup_{F\in \cJ} \cT_{F}$,  there exists $F_0\in \cJ$ such that $X\in \cT_{F_0}$. Therefore, $\hom_{\cA}(X, Y)=0$ and $\bigcap_{F\in \cJ} \cF_{F}\subset \cF_{\bar{F}}$. Then we finish the proof.
	\end{proof}
	
	The following theorem gives the relation between maximal green sequence and finest stability data in an abelian length category.
	
	\begin{theorem}\label{thm:mgs and finest stability data}
		Let $\left(\Phi , \left\{\Pi_{\varphi}\right\}_{\varphi\in \Phi}\right)$ be a stability data on an abelian length category $\cA$. $\cS_{\Phi}$ is as defined in \eqref{eq:torsion class induced by stability data}. Then $\cS_{\Phi}$ is a maximal green sequence if and only if $(\Phi , \left\{\Pi_{\varphi}\right\}_{\varphi\in \Phi})$ is a finest stability data.
	\end{theorem}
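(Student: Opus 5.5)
We reduce both directions to the criterion for maximality in Proposition \ref{prop:equivalence condition for mgs} and the criterion for finest stability data in Theorem \ref{thm:finest stability data}.

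\emph{Chain properties, independent of fineness.} By Lemma \ref{lem:torsion pair induced by stability data}, $\cS_{\Phi}$ is closed under arbitrary unions and intersections, so it is a complete totally ordered set. Its join and meet agree with those of $\filter_0(\Phi)$ via $F\mapsto \cT_F$, and this map is strictly order preserving. We have $\cT_{\emptyset}=\{0\}$, and $\cT_{\Phi}=\cA$ because every object has an HN-filtration. Hence condition (1) of Proposition \ref{prop:equivalence condition for mgs} always holds.

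Next we identify the completely join-irreducible members. Let $F$ be a filter with $\cT_F=\bigvee\{\cT_{F'} \mid F'\subsetneq F\}$ failing, i.e.\ $F$ is not the union of strictly smaller filters. Then $F$ has a minimum $\varphi_0$, and $\bigvee\{\cT_{F'}\mid F'\subsetneq F\}=\cT_{F\setminus\{\varphi_0\}}$. So condition (2) becomes the following statement $(\ast)$: for every $\varphi_0\in\Phi$, writing $F=\{\varphi\ge\varphi_0\}$ and $F^-=\{\varphi>\varphi_0\}$, the torsion class $\cT_F$ covers $\cT_{F^-}$ in $\tors\cA$. Note that $\cT_{F^-}^{\perp}\cap\cT_F=\Pi_{\varphi_0}$. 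Indeed, an object of $\cT_F$ whose HN-factors include some phase $>\varphi_0$ has a nonzero map from its first HN-factor, which lies in $\cT_{F^-}$.

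\emph{Finest implies maximal.} Assume every pair of nonzero $X,Y\in\Pi_{\varphi_0}$ has nonzero Hom in both directions. Suppose $\cT_{F^-}\subsetneq\cT\subsetneq\cT_F$ for some torsion class $\cT$. Choose $X\in\cT\setminus\cT_{F^-}$. The torsion-free quotient of $X$ with respect to $\cT_{F^-}$ is nonzero, lies in $\cT$ since $\cT$ is closed under quotients, and lies in $\cT_{F^-}^\perp\cap\cT_F=\Pi_{\varphi_0}$. Similarly choose $Y\in\cT_F\setminus\cT$ and take its torsion-free quotient with respect to $\cT$. By the same argument, $\cT^\perp\cap\cT_F\subseteq\cT_{F^-}^\perp\cap\cT_F=\Pi_{\varphi_0}$, so this quotient is a nonzero object $Y'\in\Pi_{\varphi_0}\cap\cT^\perp$. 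Now $\hom(X',Y')=0$ with $X'\in\cT$ and $Y'\in\cT^\perp$, both nonzero and semistable of phase $\varphi_0$. This contradicts Theorem \ref{thm:finest stability data}, so $(\ast)$ holds.

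\emph{Maximal implies finest.} Assume $(\ast)$ and suppose there are nonzero $X,Y\in\Pi_{\varphi_0}$ with $\hom(X,Y)=0$. Set
\[
\cT=\cT_{F^-}\vee \filt(\fac X).
\]
We will show $\cT_{F^-}\subsetneq\cT\subsetneq\cT_F$, which contradicts the covering property.

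\begin{itemize}
\item $\cT$ contains $X\notin\cT_{F^-}$, so $\cT\neq\cT_{F^-}$.
\item $\cT\subseteq\cT_F$, since $\cT_F$ is a torsion class containing both $\cT_{F^-}$ and $X$.
\item For $\cT\ne\cT_F$: the torsion class generated by a set of objects $\mathcal{X}$ in a length category is $\filt(\fac\mathcal{X})$. An object of $\cT^{\perp}$ must kill $\cT_{F^-}$ and every quotient of $X$. If we replace $Y$ by a suitable semistable object, the task reduces to finding $Z\in\Pi_{\varphi_0}$, $Z\neq0$, with $\hom(\cT_{F^-},Z)=0$ and $\hom(\fac X, Z)=0$. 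Any $Z\in\Pi_{\varphi_0}$ automatically satisfies $\hom(\cT_{F^-},Z)=0$, so the first condition is free.
\end{itemize}

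The main obstacle is the last point: $\hom(X,Y)=0$ does not immediately give $\hom(X'',Y)=0$ for quotients $X''$ of $X$. A quotient of $X$ lying in $\Pi_{\varphi_0}$ could still map to $Y$, and quotients of $X$ may leave $\Pi_{\varphi_0}$.

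To get around this, I would first reduce to simple objects of the abelian-like subcategory $\Pi_{\varphi_0}$. Morphisms within $\Pi_{\varphi_0}$ have kernels, images, and cokernels in $\Pi_{\varphi_0}$: by Lemma \ref{lem:phase in exact sequence}, the image of $f\colon A\to B$ has $\phi^-\ge\varphi_0$ as a quotient of $A$ and $\phi^+\le\varphi_0$ as a subobject of $B$. Since $\cA$ is a length category, $\Pi_{\varphi_0}$ has stable objects. If $\Pi_{\varphi_0}$ had a single stable object up to isomorphism, Hom would never vanish among nonzero objects. So nonfineness yields two non-isomorphic stable objects $S\ne S'$.

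Then set $\cT=\cT_{F^-}\vee\filt(\fac S)$. I would show $S'\in\cT^\perp\cap\cT_F$. Any quotient $S''$ of $S$ has $\phi^-(S'')\ge\varphi_0$. Its HN-factors of phase $>\varphi_0$ do not map to $S'$. Its last factor, of phase $\varphi_0$, is a quotient of $S$ inside $\Pi_{\varphi_0}$, hence is $0$ or $S$ by stability, and $\hom(S,S')=0$ by Schur. So $\hom(S'',S')=0$. Closure of $\cT^{\perp}$ under extensions finishes the argument, giving $S'\notin\cT$ and hence $\cT\subsetneq\cT_F$.
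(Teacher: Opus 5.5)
Your direction \emph{finest $\Rightarrow$ maximal} is correct and essentially matches the paper. You use torsion-free quotients and the identity $\cT_{F^-}^{\perp}\cap\cT_F=\Pi_{\varphi_0}$, while the paper uses minimal semistable quotients and maximal semistable subobjects; the content is the same. The gap is in the converse, and you created it yourself. The obstacle you name does not exist. If $p\colon X\to X''$ is an epimorphism and $g\colon X''\to Y$ is nonzero, then $gp\neq 0$. So $\hom(X,Y)=0$ already forces $\hom(X'',Y)=0$ for every quotient $X''$ of $X$. Hence ${}^{\perp}Y$ is closed under quotients, and by left exactness of $\hom(-,Y)$ also under extensions. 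It contains $\cT_{F^-}$ because $Y\in\Pi_{\varphi_0}\subseteq\cT_{F^-}^{\perp}$, and it contains $X$. Therefore $\filt(\fac(\cT_{F^-}\cup\{X\}))\subseteq{}^{\perp}Y$, so $Y\notin\cT$ while $Y\in\cT_F$, which gives $\cT\subsetneq\cT_F$. This is exactly how the paper finishes, using your original $X,Y$ with no reduction.

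The reduction you substitute is also wrong as written. Lemma \ref{lem:phase in exact sequence} only shows that images of morphisms in $\Pi_{\varphi_0}$ stay in $\Pi_{\varphi_0}$. Kernels need not, so neither Schur's lemma nor your ``a quotient of $S$ in $\Pi_{\varphi_0}$ is $0$ or $S$'' holds. Here is a counterexample. Take $Q\colon 1\to 2$, let $M=(K\xrightarrow{1}K)$, and set $\Phi=\{a>b\}$, $\Pi_a=\add(M\oplus S(1))$, $\Pi_b=\add S(2)$. This is the torsion pair $(\fac M,\add S(2))$, hence a stability data. In $\Pi_a$, both $M$ and $S(1)$ have no nonzero proper subobject in $\Pi_a$, and they are non-isomorphic. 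Yet $\hom(M,S(1))\neq 0$, because the kernel $S(2)$ of $M\to S(1)$ leaves $\Pi_a$. Also, $S(1)$ is a nonzero proper quotient of $M$ lying in $\Pi_a$. Your claim that a unique stable object forces nonvanishing Homs fails as well. With $\Pi_a=\add S(1)$ and $\Pi_b=\add(S(2)\oplus M)$, the only object of $\Pi_b$ without nonzero proper subobjects in $\Pi_b$ is $S(2)$, but $\hom(M,S(2))=0$. With the dual notion (no proper quotients), the first example has the unique such object $S(1)$ but $\hom(S(1),M)=0$. So drop the reduction to stable objects and use the closure of ${}^{\perp}Y$ under quotients.
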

	
	\begin{proof}
		For the ``if'' part, we firstly show that $\cS_{\Phi}=\left\{\cT_{F}\ \middle|\ F\in \filter_0(\Phi)\right\}$ is a complete totally ordered set. By definition, $\cS_{\Phi}$ is a totally ordered set with respect to inclusion. Suppose $\cJ$ is a subset of $\filter_0(\Phi)$. Then by the proof in Lemma \ref{lem:torsion pair induced by stability data}, we know that $\bigcup_{F\in \cJ}\cT_{F}\in \cS_{\Phi}$. By our construction, the join of $\left\{\cT_{F}\right\}_{F\in \cJ}$ is given by $\bigvee \{\cT_{F}\mid F\in\cJ\} := \bigcup_{F\in \cJ}\cT_{F}$. Dually, the meet of $\left\{\cT_{F}\right\}_{F\in \cJ}$ is given by $\bigwedge \{\cT_{F}\mid F\in\cJ\} := \bigcap_{F\in \cJ}\cT_{F}\in \cS_{\Phi}$.
		
		Note that $\cT_{\emptyset}=\left\{0\right\}$ and $\cT_{\Phi}=\cA$ and hence (1) in Proposition \ref{prop:equivalence condition for mgs} follows. To prove (2) in Proposition \ref{prop:equivalence condition for mgs}, suppose that $\cT_{F_0}\in \cS_{\Phi}$ is completely join-irreducible. Consider $\cT_{\bar{F}}=\bigvee \{\cT_{F}\mid F< F_0\} = \bigcup_{F< F_0} \cT_{F}$ where $\bar{F}:= \bigcup_{F< F_0} F\in \filter_0(\Phi)$. Assume that there exists a torsion class $\cT \in{\rm tors}\cA$ such that $\cT_{\bar{F}}\subsetneq \cT\subsetneq \cT_{F_0}$. By our construction, there exists a unique $\varphi_0\in F_0\backslash\bar{F}$ such that $\Pi_{\varphi_0}\ne \left\{0\right\}$. Indeed, if there exist $\varphi_0, \varphi_0'\in F_0\backslash\bar{F}$ and $\Pi_{\varphi_{0}}\ne \left\{0\right\}\ne \Pi_{\varphi_{0}'}$, suppose $\varphi_{0}<\varphi_{0}'$, then the filter $\uparrow\varphi_{0}'$ is a proper subset of $F_0$ and hence $\Pi_{\varphi_{0}'}\subset \cT_{\uparrow\varphi_{0}'}\subset \cT_{\bar{F}}$. Therefore, $\varphi_{0}'\in \bar{F}$. Therefore there exists a nonzero object $X\in \cT\backslash\cT_{\bar{F}}$. Denote by $A_n$ the minimal semistable quotient object of $X$. Then by definition, $\phi(A_n)\in {F_0}\backslash \bar{F}$. Since $A_n$ is nonzero, we have $\Pi_{\phi(A_n)}\ne 0$. By the uniqueness of $\varphi_0$, we have $\phi(A_n)=\varphi_0$. In addition, we have $A_n\in \cT$ since $\cT$ is a torsion class and $A_n$ is a quotient of $X$. Let $\cF=\cT^{\perp}$, then we have $\cF_{F_0}\subsetneq \cF\subsetneq \cF_{\bar{F}}$. For the same reason, there exists a nonzero object $B_m\in \cF$ such that $\phi(B_m)=\varphi_0$, i.e. $B_m\in \Pi_{\varphi_0}$. However, since $(\cT, \cF)$ is a torsion pair, we have $\hom_{\cA}(A_n, B_m)=0$. By Theorem \ref{thm:finest stability data}, $(\Phi,\left\{\Pi_{\varphi}\right\}_{\varphi\in \Phi})$ is not a finest stability data, a contradiction.
		
		For the ``only if'' part, suppose $(\Phi,\left\{\Pi_{\varphi}\right\}_{\varphi\in \Phi})$ is not a finest stability data, then by Theorem \ref{thm:finest stability data}, there exists $\varphi_{0}\in \Phi$ and nonzero objects $X, Y\in \Pi_{\varphi_{0}}$ such that $\hom_{\cA}(X,Y)=0$. Consider the torsion class $\cT_{\uparrow\varphi_{0}}$. Define $\bar{F}:=\bigcup_{F<\uparrow\varphi_{0}} F=\uparrow\varphi_{0}\backslash\left\{\varphi_0\right\}$. Then $\bigvee \{\cT_{F}\mid F< \uparrow\varphi_{0}\} = \bigcup_{F<\uparrow\varphi_{0}}\cT_{F}=\cT_{\bar{F}}\subsetneq \cT_{\uparrow\varphi_{0}}$ which means that $\cT_{\uparrow\varphi}$ is completely join-irreducible since $\Pi_{\varphi}\ne \left\{0\right\}$. We claim that $\cT_{\uparrow\varphi_{0}}$ does not cover $\cT_{\bar{F}}$. To see this, consider the minimal torsion class which contains $\cT_{\bar{F}}$ and $X$, namely, $\cT:=\filt(\fac(\cT_{\bar{F}}\cup \{X\}))$. 
		On one hand, it is obvious that $\cT_{\bar{F}}\subsetneq \cT$ since $\hom_{\cA}(\cT_{\bar{F}}, X)=0$. On the other hand, we have $\hom_{\cA}(X, Y)=0$ and $\hom_{\cA}(\cT_{F}, Y)=0$ which implies that $\cT \subset \prescript{\perp}{}{Y}$. It follows that $Y\notin \cT$ and hence $\cT\subsetneq \cT_{\uparrow\varphi_{0}}$. Therefore, $\cS_{\Phi}$ is not a maximal green sequence and we finish the proof.
	\end{proof}
	
	Theorem \ref{thm:mgs and finest stability data} can be viewed as a generalization of the following proposition, which establishes the relationship between finite finest stability data and (classical) maximal green sequences in an abelian category. The main difficulty in our proof lies in the fact that, when dealing with maximal green sequences of infinite length, the usual notion of “adjacent” torsion classes is no longer available. Consequently, one must instead work with the concept of completely join‑irreducible torsion classes to capture the necessary combinatorial structure.
	
	\begin{proposition}\cite[Proposition 3.5]{CLR}
		Let $\left(\Phi, \left\{\Pi_{\varphi}\right\}_{\varphi\in \Phi}\right)$ be a finite stability data on an abelian category $\cA$, which induces a chain of torsion classes. Then it is a (classical) maximal green sequence if and only if $\left(\Phi, \left\{\Pi_{\varphi}\right\}_{\varphi\in \Phi}\right)$ is finest.
	\end{proposition}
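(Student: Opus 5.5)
Since $\Phi$ is finite, the strategy is to deduce this proposition from Theorem \ref{thm:mgs and finest stability data}, checking that for finite $\Phi$ the infinite notion of maximal green sequence agrees with the classical one. As $\Phi$ is a finite totally ordered set, say $\varphi_1>\varphi_2>\cdots>\varphi_n$, its filters are exactly $\emptyset$ and $\uparrow\varphi_k$ for $1\le k\le n$. Hence $\cS_\Phi$ is the finite chain
\begin{equation}
\{0\}=\cT_{\emptyset}\subsetneq \cT_{\uparrow\varphi_1}\subsetneq\cdots\subsetneq\cT_{\uparrow\varphi_n}=\cA ,
\end{equation}
with strict inclusions because every $\Pi_{\varphi}$ is nonzero. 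This is the chain of torsion classes referred to in the statement.

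Next I would compare the two notions of maximality. A finite chain is automatically a complete totally ordered set, and every nontrivial member $\cT_{\uparrow\varphi_k}$ is completely join-irreducible in $\cS_\Phi$, since the join of the members strictly below it is $\cT_{\uparrow\varphi_{k-1}}$ (with $\cT_{\uparrow\varphi_0}:=\{0\}$). So the conditions of Proposition \ref{prop:equivalence condition for mgs} say exactly that the chain runs from $\{0\}$ to $\cA$ and that each $\cT_{\uparrow\varphi_k}$ covers $\cT_{\uparrow\varphi_{k-1}}$ in $\tors\cA$. This is precisely the classical definition of a maximal green sequence, as recorded in the remark after Proposition \ref{prop:equivalence condition for mgs}. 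Theorem \ref{thm:mgs and finest stability data} then gives the equivalence with finestness whenever $\cA$ is a length category.

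The main obstacle is that the proposition is stated for an arbitrary abelian category, whereas Theorem \ref{thm:mgs and finest stability data} assumes $\cA$ is of finite length. The length hypothesis was used only in Lemma \ref{lem:torsion pair induced by stability data}, to control unions and intersections of infinitely many filters. For finite $\Phi$ this lemma is trivial, since any family of filters has a largest and a smallest member, so I would rerun the proof of Theorem \ref{thm:mgs and finest stability data} directly.

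For ``finest $\Rightarrow$ maximal'', suppose $\cT_{\uparrow\varphi_{k-1}}\subsetneq\cT\subsetneq\cT_{\uparrow\varphi_k}$. Take $X\in\cT\setminus\cT_{\uparrow\varphi_{k-1}}$; its last HN factor $A$ lies in $\Pi_{\varphi_k}\cap\cT$. Dually, some object $Y$ of $\cT^\perp\setminus\cT_{\uparrow\varphi_k}^{\perp}$ has first HN factor $B\in\Pi_{\varphi_k}\cap\cT^\perp$. Then $\hom(A,B)=0$, which contradicts Theorem \ref{thm:finest stability data}.

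For the converse, given nonzero $X,Y\in\Pi_{\varphi_k}$ with $\hom(X,Y)=0$, the torsion class generated by $\cT_{\uparrow\varphi_{k-1}}$ and $X$ lies strictly between $\cT_{\uparrow\varphi_{k-1}}$ and $\cT_{\uparrow\varphi_k}$, because it is contained in ${}^{\perp}Y$. This contradicts the covering condition. Both directions use only finite HN filtrations, so they hold in any abelian category.
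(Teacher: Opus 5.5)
Under the paper's standing convention (see the Notation paragraph) that $\cA$ is a length category, your first two paragraphs already prove the statement. This matches the paper, which gives no proof of this cited result and presents Theorem \ref{thm:mgs and finest stability data} as its generalization. The gap is in your extension to an arbitrary abelian category, as the statement is literally phrased.

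In the direction ``maximal $\Rightarrow$ finest'' you use ``the torsion class generated by $\cT_{\uparrow\varphi_{k-1}}$ and $X$''. What you actually have is $\filt(\fac(\cT_{\uparrow\varphi_{k-1}}\cup\{X\}))$. It is closed under quotients and extensions and is contained in ${}^{\perp}Y$. However, in a general abelian category such a subcategory need not be a torsion class, because an object need not have a largest subobject in it. For example, in the category of abelian groups the finite groups are closed under quotients and extensions, yet $\bigoplus_{n\in\mathbb{N}}\mathbb{Z}/2$ has no largest finite subgroup. The same objection applies if you use ${}^{\perp}Y$ or $\cT_{\uparrow\varphi_k}\cap{}^{\perp}Y$ instead. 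In a length category this closure is a torsion class, and that is exactly where the paper's proof of the ``only if'' part of Theorem \ref{thm:mgs and finest stability data} uses the length hypothesis. So it is not true that length enters only through Lemma \ref{lem:torsion pair induced by stability data}, and your closing sentence fails for this step.

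To close the gap, argue from the definition of finest rather than from the Hom criterion.
\begin{enumerate}
\item If the data is not finest, choose a finer but non-equivalent stability data $(\Psi,\{P_\psi\}_{\psi\in\Psi})$ with surjection $r\colon\Psi\to\Phi$.
\item Since $r$ is weakly monotone, $r^{-1}(F)$ is a filter of $\Psi$ for every filter $F$ of $\Phi$, and $\filt\{P_\psi\mid\psi\in r^{-1}(F)\}=\cT_F$.
\item If $r$ were injective, it would be an order isomorphism with $P_\psi=\Pi_{r(\psi)}$, making the two data equivalent. So some fibre $r^{-1}(\varphi_k)$ contains elements $\psi'>\psi''$.
\item Monotonicity of $r$ gives $r^{-1}(\uparrow\varphi_{k-1})\subsetneq\uparrow\psi'\subsetneq r^{-1}(\uparrow\varphi_k)$, with $\uparrow\varphi_0:=\emptyset$. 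Here $\psi'$ lies in the middle set but not the first, and $\psi''$ lies in the last set but not the middle.
\item Every $P_\psi$ is nonzero, so $\cT':=\filt\{P_\psi\mid\psi\ge\psi'\}$ satisfies $\cT_{\uparrow\varphi_{k-1}}\subsetneq\cT'\subsetneq\cT_{\uparrow\varphi_k}$. It is a torsion class by the lemma preceding Lemma \ref{lem:torsion pair induced by stability data}, which holds in any abelian category.
\end{enumerate}

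Symmetrically, in your other direction the objects $A$ and $B$ show that splitting $\Pi_{\varphi_k}$ into $\cT\cap\cF_{\uparrow\varphi_{k-1}}$ and $\cT_{\uparrow\varphi_k}\cap\cT^{\perp}$ gives a strictly finer stability data. Hence the proposition holds for arbitrary abelian $\cA$ without appealing to Theorem \ref{thm:finest stability data} at all.
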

	
	Now given an abelian length category $\cA$, by Theorem \ref{thm:mgs and finest stability data}, we obtain a well-defined map $\eta$ between these two set:
	\begin{equation}\label{eq:map from finest stability data to mgs}
		\eta:\left\{\text{finest stability data on }\cA\right\}\rightarrow\left\{\text{maximal green sequences in }\cA\right\}
	\end{equation}
	
	We aim to prove that the map $\eta$ is a bijection by constructing stability data on $\cA$ from a chain of torsion classes. To this end, we first establish the following framework.
	
	Let $\cS = \left\{\cT_\alpha\right\}_{\alpha \in J}$ be a maximal green sequence in $\cA$. The index set $J$ is endowed with a partial order induced by inclusion within $\cS$; specifically, $\alpha < \beta$ if and only if $\cT_\alpha \subsetneq \cT_\beta$.
	
	For a general lattice, there is no direct relationship between compact elements and completely join-irreducible elements. However, they have the following relation in a complete totally ordered set.
	
	\begin{lemma}\label{lem:compact element}
		Suppose $J$ is a complete totally ordered set with the minimal element $\bot$ and the maximal element $\top$, then the following holds:
		\begin{enumerate}
			\item $x\in J$ is compact if and only if $x=\bot$ or $x$ is completely join-irreducible.
			\item $x\in J$ is co-compact if and only if $x=\top$ or $x$ is completely meet-irreducible.
		\end{enumerate}
	\end{lemma}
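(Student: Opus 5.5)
We prove (1) directly from the definitions, using only that $J$ is totally ordered and complete; (2) then follows by order duality. The key observation is that in a totally ordered set a finite join $\bigvee S_0$ of a nonempty finite $S_0$ is just $\max S_0$, and $\bigvee\emptyset=\bot$. Hence $x$ is compact exactly when, for every $S\subseteq J$ with $x\le\bigvee S$, either $x=\bot$ or there is some $s\in S$ with $x\le s$.

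For the ``if'' direction, $\bot$ is trivially compact, since $\bot\le\bigvee\emptyset$. Now suppose $x\neq\bot$ is completely join-irreducible and $x\le\bigvee S$. Set $S'=\{s\wedge x\mid s\in S\}=\{\min(s,x)\mid s\in S\}$. In a complete chain, meets distribute over arbitrary joins in the form $x\wedge\bigvee S=\bigvee_{s\in S}(x\wedge s)$. To check this:
\begin{itemize}
\item if some $s\ge x$, both sides equal $x$;
\item otherwise every $s<x$, so $\bigvee S\le x$ and both sides equal $\bigvee S$.
\end{itemize}
Thus $x=x\wedge\bigvee S=\bigvee S'$. Complete join-irreducibility then gives $x\in S'$, so $x=\min(s,x)$ for some $s$, i.e.\ $x\le s$, and $x$ is compact.

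For the ``only if'' direction, let $x$ be compact with $x\neq\bot$, and suppose $x=\bigvee S$. By compactness there is a finite $S_0\subseteq S$ with $x\le\bigvee S_0$. Since $x\neq\bot$, $S_0$ is nonempty, so $\bigvee S_0=\max S_0=s_0\in S$. We then have $x\le s_0\le\bigvee S=x$, so $x=s_0\in S$, and $x$ is completely join-irreducible.

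Part (2) is the same argument applied to the opposite order $J^{\mathrm{op}}$, which is again a complete chain, with $\top$ and meets playing the roles of $\bot$ and joins. The only point requiring any care is the distributivity identity in the ``if'' direction, and the case split above on whether some $s\ge x$ handles it. We also note that $\bot$ itself is not completely join-irreducible (as remarked after the definition), which is why it must be listed separately in (1).
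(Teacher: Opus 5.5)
Your proof is correct and takes essentially the same route as the paper. The ``only if'' direction is identical (take the maximum of the finite subset), and your case split in the ``if'' direction is the paper's argument: the paper skips the distributivity detour by observing directly that if every $s\in S$ satisfies $s<x$ then $x=\bigvee S$, so complete join-irreducibility forces $x\in S$, a contradiction. Part (2) is obtained by duality in both.
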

	
	\begin{proof}
		We only prove (1). 
		
		For the ``if'' part, obviously $\bot$ is compact. Now suppose $x$ is completely join-irreducible and $x \leq \bigvee I$ where $I\subset J$. Indeed, there exists $y\in I$ such that $x\leq y$.  Otherwise, for any $y\in I$, we have $y<x$ ($J$ is linearly ordered). It is impossible since $x$ is completely join-irreducible.
		
		For the ``only if'' part, suppose $x\in J$ is compact and $x\ne \bot$. Suppose $x=\bigvee I$ where $I\subset J$ and $I\ne \emptyset$. There exists finitely many elements, say, $\{y_1, \cdots y_n\}\subset I$ such that $x\leq\bigvee\{y_1, \cdots y_n\}$. Since $J$ is linearly ordered, there exists a maximal element, say, $y_1$ in $\{y_1, \cdots y_n\}$. Thus we have $y_1\leq x\leq y_1$ which yields that $x$ is completely join-irreducible.
	\end{proof}
	
	It is already showed that the set of torsion classes ${\rm tors} \cA$ of an abelian length category $\cA$ is a complete algebraic lattice\cite{DIRRT}. However, a complete sub-lattice of ${\rm tors} \cA$ is not necessarily again algebraic. Now we show that any maximal green sequence is algebraic.
	
	\begin{proposition}
		Suppose $\{\cT_{\alpha}\}_{\alpha\in J}$ is a maximal green sequence in an abelian length category $\cA$. Then $\{\cT_{\alpha}\}_{\alpha\in J}$ is algebraic as a complete lattice.
	\end{proposition}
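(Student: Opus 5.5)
The plan is to reduce algebraicity to a statement about single objects, using Lemma \ref{lem:compact element}. Since $\cS=\{\cT_\alpha\}_{\alpha\in J}$ is a complete totally ordered set, that lemma says the compact elements of $\cS$ are exactly the bottom element $\{0\}$ and the completely join-irreducible $\cT_\alpha$. It therefore suffices to show the following for every $\alpha\in J$:
\begin{equation}
\cT_\alpha=\bigvee\{\cT_\beta\mid \cT_\beta\subseteq\cT_\alpha,\ \cT_\beta \text{ compact}\}.
\end{equation}
For this, I will show that every nonzero $X\in\cT_\alpha$ lies in some completely join-irreducible $\cT_{\beta_0}\subseteq\cT_\alpha$.

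\textbf{Step 1: joins in $\cS$ are unions.} Let $\{\cT_\beta\}_{\beta\in I}$ be any subchain of $\cS$. Its union is closed under quotients, since each $\cT_\beta$ is. It is also closed under extensions: the two end terms of an extension lie in some $\cT_{\beta_1}$ and $\cT_{\beta_2}$, hence both lie in the larger of the two, which is closed under extensions. So the union $\cU$ is a torsion class.

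Next, $\cU$ is comparable with every $\cT_\gamma\in\cS$. Either $\cT_\gamma$ contains all the $\cT_\beta$ and hence contains $\cU$, or $\cT_\gamma\subseteq\cT_\beta$ for some $\beta\in I$, in which case $\cT_\gamma\subseteq\cU$. By maximality of the chain $\cS$, this forces $\cU\in\cS$. Hence $\cU=\bigvee_{\beta\in I}\cT_\beta$ in $\cS$.

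The analogous statement for meets, that they are intersections, is immediate, because intersections of torsion classes are torsion classes. The same comparability argument then shows the intersection lies in $\cS$.

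\textbf{Step 2: minimal torsion class containing $X$.} Fix a nonzero $X\in\cT_\alpha$ and let $I_X=\{\beta\in J\mid X\in\cT_\beta\}$. This set is nonempty, since $\alpha\in I_X$. By Step 1, $\cT_{\beta_0}:=\bigcap_{\beta\in I_X}\cT_\beta$ lies in $\cS$ and contains $X$. It is therefore the smallest member of $\cS$ containing $X$, and $\cT_{\beta_0}\subseteq\cT_\alpha$.

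I claim $\cT_{\beta_0}$ is completely join-irreducible. It is not $\{0\}$, since $X\neq 0$. Suppose $\cT_{\beta_0}=\bigvee S$ with $S\subseteq\cS$ and $\cT_{\beta_0}\notin S$. Then every element of $S$ is strictly smaller than $\cT_{\beta_0}$. By Step 1 the join is the union, so $X$ lies in some $\cT_\beta\in S$ with $\cT_\beta\subsetneq\cT_{\beta_0}$. This contradicts the minimality of $\beta_0$. Hence $\cT_{\beta_0}$ is compact by Lemma \ref{lem:compact element}.

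\textbf{Step 3: conclusion.} Let $\cT'$ be the join of all compact elements below $\cT_\alpha$. By Step 2 every object of $\cT_\alpha$ lies in $\cT'$, and trivially $\cT'\subseteq\cT_\alpha$, so $\cT_\alpha=\cT'$. Thus every element of $\cS$ is a join of compact elements, and $\cS$ is algebraic.

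The main obstacle is Step 1: the join taken in $\cS$ must agree with the set-theoretic union, and this needs the maximality of the chain. Without maximality, the union might not belong to $\cS$, and the join in $\cS$ could be strictly larger than the union. That would break the minimality argument in Step 2.
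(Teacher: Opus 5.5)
Your proof is correct and follows the same reduction as the paper: by Lemma \ref{lem:compact element}, algebraicity of the chain amounts to each $\cT_\alpha$ being the join of the completely join-irreducible members below it. The paper gets this fact by citing \cite[Proposition A.2]{DK}, whereas you prove it directly (joins in a maximal chain are unions, and the smallest member containing a nonzero $X$ is completely join-irreducible), so your argument is self-contained; the length hypothesis on $\cA$ enters only where you use that a subcategory closed under quotients and extensions is a torsion class.
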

	
	\begin{proof}
		The authors in \cite[Proposition A.2]{DK} have proved that each torsion class $\cT_{\alpha}$ (except $0$) equals the join of the completely join-irreducible elements contained in $\cT_{\alpha}$. By Lemma \ref{lem:compact element}, we know that $\cT_{\alpha}$ is algebraic.
	\end{proof}
	
	By taking a dual of Proposition \ref{prop:realization of ideal} and Remark \ref{rmk:realization of extension ideal}, we know that there exists a unique (up to isomorphism) totally ordered set $\Phi$ such that $\filter_0(\Phi)\cong J$. More precisely, combining Lemma \ref{lem:compact element}, $\Phi$ is isomorphic to the set of completely join-irreducible elements of $J$ with opposite order. 
	
	For any $\varphi\in \Phi$, define 
	\begin{equation}
		\Pi_{\varphi}=\cT_{\uparrow\varphi}\cap \cF_{\uparrow\varphi\backslash\{\varphi\}}.
	\end{equation}
	We claim that $\left(\Phi, \Pi_{\varphi}\right)$ is the preimage of $J$ under the map $\eta$ defined in \eqref{eq:map from finest stability data to mgs}.
	
	\begin{proposition}\label{prop:mgs to finset stability data}
		Suppose $\cS=\{\cT_{\alpha}\}_{\alpha\in J}$ is a maximal green sequence in an abelian length category $\cA$, $\left(\Phi, \Pi_{\varphi}\right)$ is as defined above such that $\filter_0(\Phi)\cong J$. Then it is a finest stability data on $\cA$.
	\end{proposition}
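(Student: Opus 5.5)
Throughout I identify $\Phi$ with the set of completely join-irreducible elements of $J$ in the opposite order. For a filter $F\in\filter_0(\Phi)$ I write $\cT_F$ for the torsion class in $\cS$ corresponding to $F$ under $\filter_0(\Phi)\cong J$, and $\cF_F=\cT_F^\perp$. Explicitly, $\cT_F=\bigvee\{\cT_\alpha\mid \alpha\in F\}$, which is the union since $\cS$ is a chain and joins in $\cS$ are unions, as in Lemma \ref{lem:torsion pair induced by stability data}. Then $\cT_{\uparrow\varphi}=\cT_\varphi$ is completely join-irreducible, and $\cT_{\uparrow\varphi\setminus\{\varphi\}}=\bigvee\{\cT_\alpha\mid\alpha<\varphi\}=:\cT_\varphi^-$. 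The plan is to check three things in order: the pieces $\Pi_\varphi$ are nonzero and extension-closed; they are Hom-orthogonal in the right direction; every object has an HN-filtration. Finestness then follows from Theorem \ref{thm:finest stability data}.

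First, $\Pi_\varphi=\cT_\varphi\cap(\cT_\varphi^-)^\perp$ is an intersection of a torsion class and a torsion-free class, so it is extension-closed. It is nonzero because $\cT_\varphi^-\subsetneq\cT_\varphi$: take $X\in\cT_\varphi\setminus\cT^-_\varphi$; its torsion-free quotient with respect to $\cT^-_\varphi$ is nonzero and lies in $\cT_\varphi$. For orthogonality, let $\varphi'>\varphi''$ in $\Phi$. In $J$ this means $\cT_{\varphi'}\subseteq\cT_{\varphi''}^-$, so $\Pi_{\varphi'}\subseteq\cT^-_{\varphi''}$ and $\Pi_{\varphi''}\subseteq(\cT^-_{\varphi''})^\perp$, which gives $\hom(\Pi_{\varphi'},\Pi_{\varphi''})=0$.

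Second, for HN-filtrations, take $X\neq0$ and define $F_X=\{\varphi\mid \text{the torsion part } t_\varphi X\text{ of } X \text{ w.r.t. }\cT_\varphi \text{ differs from that w.r.t. }\cT_\varphi^-\}$. I will argue this set is finite. The torsion subobjects $t_{\alpha}X$ form a chain of subobjects of $X$ indexed by $J$, so only finitely many distinct subobjects occur because $X$ has finite length. Using that joins in $\cS$ are unions, $t_{\bigvee}X=\bigcup t_\alpha X$ stabilizes, and similarly for meets. Hence the function $\alpha\mapsto t_\alpha X$ is a step function with finitely many jumps. Each jump happens exactly at a completely join-irreducible $\cT_\varphi$, because at a non-join-irreducible element the value is the union of the values below it. Listing the jumps $\varphi_1>\cdots>\varphi_n$ gives $X_i=t_{\varphi_i}X$. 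Then $X_i/X_{i-1}\in\cT_{\varphi_i}\cap(\cT_{\varphi_i}^-)^\perp=\Pi_{\varphi_i}$, since between consecutive jumps the torsion part is constant and $t_{\varphi_i}^-X=X_{i-1}$. Uniqueness of the filtration follows from the orthogonality in the standard way: any HN-filtration must have its $X_k$ equal to the torsion part for $\cT_{\uparrow\varphi_k}$.

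This gives a stability data whose induced chain is $\cS$. By Theorem \ref{thm:mgs and finest stability data}, since $\cS_\Phi=\cS$ is a maximal green sequence, the stability data is finest. I expect the main obstacle to be the middle step, specifically verifying that the jumps of $\alpha\mapsto t_\alpha X$ occur only at completely join-irreducible elements and that torsion parts commute with the relevant joins and meets. This uses Lemma \ref{lem:torsion pair induced by stability data}-style arguments together with the length condition, and in particular needs $\bigcap\cF_\alpha=(\bigcup\cT_\alpha)^\perp$.
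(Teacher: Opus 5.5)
Your proposal is correct and is essentially the paper's argument: the HN-filtration of $X$ consists of its torsion parts along $\cS$, whose strict increases occur only at completely join-irreducible members; Hom-orthogonality comes from $\cT_{\varphi'}\subseteq\cT^-_{\varphi''}$ for $\varphi'>\varphi''$; and finestness then follows from Theorem~\ref{thm:mgs and finest stability data}. The difference is presentation: the paper builds the same filtration recursively (take the least member $\cT_{\alpha}$ of $\cS$ containing $X$, note that it covers the largest member $\cT_{\beta}$ not containing $X$ and so is completely join-irreducible, split off $X/t_{\beta}X$, and repeat), whereas you describe all jumps of $\alpha\mapsto t_\alpha X$ at once and also sketch uniqueness of the filtration, which the paper leaves implicit.
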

	
	\begin{proof}
		By the ``only if'' part of Theorem \ref{thm:mgs and finest stability data}, we only need to show that $\left(\Phi, \{\Pi_{\varphi}\}_{\varphi\in \Phi}\right)$ is a stability data. By our construction, $\Pi_{\varphi}\ne \{0\}$ for any $\varphi\in \Phi$.
		
		(1) Suppose that $\varphi_1 <_\Phi \varphi_2$ and then $\uparrow\varphi_2 <_J \uparrow\varphi_1$. Therefore $\uparrow\varphi_2 \leq_J \uparrow\varphi_1\backslash\{\varphi_1\}$ and $\Pi_{\varphi_2}\subset \cT_{{\uparrow\varphi_1}\backslash\varphi_1}$. It follows that $\hom_{\cA}(\Pi_{\varphi_2},\Pi_{\varphi_1})=0$.
		
		(2) Now consider the chain of torsion classes $\{\cT_{\alpha}\}_{\alpha\in J}$ and an object $X\in \cA$. Let $X_n:=X$ and define $\cS_n:=\left\{\cT_{\alpha}\in J\mid X_n\in \cT_{\alpha}\right\}$. The set $\cS_n$ clearly has a minimal torsion class (by taking meet), which we denote by $\cT_{\alpha_n}$. Similarly, the complement $\cS_n^c$ has a maximal torsion class, denoted by $\cT_{\beta_n}$. 
		
		It follows that $\cT_{\alpha_n}$ covers $\cT_{\beta_n}$, making $\cT_{\alpha_n}$ completely join-irreducible in $\cS$. Therefore, there exists some $\varphi_n\in \Phi$ such that $\alpha_n=\uparrow \varphi_n$, and consequently $\beta_n=\uparrow\varphi_n\backslash\{\varphi_n\}$. 
		
		Now consider the canonical sequence of $X_n$ with respect to the torsion pair $(\cT_{\uparrow \varphi_n\backslash \{\varphi_n\}},\cF_{\uparrow \varphi_n\backslash \{\varphi_n\}})$:
		\begin{equation}
			0\rightarrow X_{n-1}\rightarrow X_n\rightarrow X_n/X_{n-1}\rightarrow 0.
		\end{equation}
		We then have $X_n/X_{n-1}\in \Pi_{\varphi_n}$. 
		
		If $X_{n-1}=0$, we set $n=1$ and obtain the HN-filtration $0\subset X_1=X$ for $X$.
		
		Otherwise, we replace $X_n$ by $X_{n-1}$ and repeat the above procedure. Since $X$ has finite length, this process terminates after finitely many steps. Here, $n$ denotes the total number of steps performed. Then we obtain a filtration:
		\begin{equation}
			0=X_0\subset X_1\subset\cdots \subset X_{n-1}\subset X_n=X.
		\end{equation}
		
		By construction, we have $\cT_{\uparrow\varphi_n\backslash \{\varphi_n\}}\in \cS_{n-1}$, and $\cT_{\uparrow\varphi_{n-1}}$ is the minimal torsion class in $\cS_{n-1}$. This implies that $\cT_{\uparrow\varphi_{n-1}}\subsetneq \cT_{\uparrow\varphi_{n}}$, and hence $\varphi_{n}<\varphi_{n-1}$. By induction, we conclude that $\varphi_{i}<\varphi_{i-1}$ for all $i$.
		
		Therefore, $\left(\Phi, \{\Pi_{\varphi}\}_{\varphi\in \Phi}\right)$ is a stability data. By Theorem \ref{thm:mgs and finest stability data}, $\left(\Phi, \{\Pi_{\varphi}\}_{\varphi\in \Phi}\right)$ is finest. Then we finish the proof.
	\end{proof}
	
	As a corollary, we know that the map $\eta$ defined in \eqref{eq:map from finest stability data to mgs} is actually a bijection.
	
	The authors in \cite{CLR} proved that in the tube category, any stability data can be refined to a finest one. Now we show that it holds in any abelian length category by lattice-theoretic methods.
	
	\begin{corollary}
		Let $\cA$ be an abelian length category. Then every stability data on $\cA$ admits a refinement to a finest stability data.
	\end{corollary}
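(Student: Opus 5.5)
Given a stability data $(\Phi,\{\Pi_\varphi\}_{\varphi\in\Phi})$ on $\cA$, I will pass to torsion classes and refine there. By Lemma \ref{lem:torsion pair induced by stability data}, the set $\cS_\Phi=\{\cT_F\mid F\in\filter_0(\Phi)\}$ is a chain in $\tors\cA$. It is closed under arbitrary unions and intersections, contains $\{0\}$ and $\cA$, and $F\mapsto\cT_F$ is an order isomorphism $\filter_0(\Phi)\cong\cS_\Phi$.

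\textbf{Step 1: extend the chain.} By Zorn's lemma, $\cS_\Phi$ lies in a maximal chain $\cS$ of $\tors\cA$, since the union of a chain of chains is a chain. Such a maximal chain is a maximal green sequence in the sense of the paper. By Proposition \ref{prop:mgs to finset stability data} it determines a finest stability data $(\Psi,\{P_\psi\}_{\psi\in\Psi})$ with $\eta(\Psi)=\cS$. Here $\Psi$ is the set of completely join-irreducible elements of $\cS$, with the opposite order, and $P_\psi=\cT_{\uparrow\psi}\cap\cF_{\uparrow\psi\setminus\{\psi\}}$.

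\textbf{Step 2: define $r:\Psi\to\Phi$.} Take $\psi\in\Psi$ and let $\cT^\psi:=\cT_{\uparrow\psi}\in\cS$. Choose a nonzero $X\in P_\psi$. Let $F_\psi$ be the smallest filter of $\Phi$ with $X\in\cT_{F_\psi}$; it exists because meets in $\cS_\Phi$ are intersections. In terms of Lemma \ref{lem:phase in exact sequence}, $F_\psi=\uparrow\phi^-(X)$. Set $r(\psi):=\phi^-(X)$.

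To see this is independent of $X$, I use the chain. Every $\cT_F\in\cS_\Phi$ is comparable with both $\cT^\psi$ and $\cT^\psi_-:=\cT_{\uparrow\psi\setminus\{\psi\}}$. Since $\cT^\psi$ covers $\cT^\psi_-$ inside $\cS$ and $\cS_\Phi\subseteq\cS$, each $\cT_F$ either contains $\cT^\psi$ or is contained in $\cT^\psi_-$. Hence $P_\psi\subseteq\cT_F$ exactly when $\cT^\psi\subseteq\cT_F$, and $P_\psi\subseteq\cF_F$ otherwise. So the least filter $G$ with $\cT^\psi\subseteq\cT_G$ works uniformly for all nonzero $X\in P_\psi$.

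It remains to show $G$ is principal, so that $r(\psi)=\min G$ makes sense. The union $G'$ of all filters $F$ with $\cT_F\subseteq\cT^\psi_-$ satisfies $\cT_{G'}\subseteq\cT^\psi_-$. If $G=G'$ this contradicts $\cT^\psi\subseteq\cT_G$. So $G\setminus G'$ is nonempty, and since filters of a total order are upsets, it is a single point $\varphi$ with $G=\uparrow\varphi$. This also shows $P_\psi\subseteq\Pi_{r(\psi)}$.

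\textbf{Step 3: check the refinement axioms.} Monotonicity of $r$ follows because $\psi'>\psi''$ means $\cT^{\psi'}\subsetneq\cT^{\psi''}$, so the corresponding least filters are nested. For surjectivity, given $\varphi$, the inclusion $\cT_{\uparrow\varphi\setminus\{\varphi\}}\subsetneq\cT_{\uparrow\varphi}$ holds in $\cS$. Some completely join-irreducible $\cT^\psi$ lies in between, because by \cite[Proposition A.2]{DK} every member of $\cS$ is a join of the completely join-irreducibles below it. That $\psi$ has $r(\psi)=\varphi$.

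For $\Pi_\varphi=\filt\{P_\psi\mid r(\psi)=\varphi\}$: the inclusion $\supseteq$ comes from Step 2 and the extension-closedness of $\Pi_\varphi$. For $\subseteq$, take $X\in\Pi_\varphi$ and its HN-filtration with respect to $\Psi$. Each HN-factor lies in $\cT_{\uparrow\varphi}\cap\cF_{\uparrow\varphi\setminus\{\varphi\}}$. Indeed, the HN pieces come from canonical sequences for torsion pairs in $\cS$, so the factors are subquotients landing in the correct torsion and torsion-free classes. Therefore each factor's phase $\psi$ satisfies $r(\psi)=\varphi$.

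\textbf{Main obstacle.} I expect the hardest part to be Step 2, showing that $r$ is well defined and principal-valued, together with the $\subseteq$ direction of the last equality. Both hinge on the covering relation in $\cS$ interacting correctly with the sublattice $\cS_\Phi$. I will handle this with the comparability argument above, using that $\cS_\Phi$ is closed under arbitrary joins and meets by Lemma \ref{lem:torsion pair induced by stability data}.
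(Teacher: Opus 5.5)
Your proposal is correct and follows the paper's proof. You extend $\cS_\Phi$ to a maximal chain by Zorn's lemma, pass to the associated finest stability data $(\Psi,\{P_\psi\})$, define $r(\psi)$ as the $\Phi$-phase $\phi^-$ of the objects of $P_\psi$, check $P_\psi\subseteq\Pi_{r(\psi)}$ and monotonicity, and use $\Psi$-HN filtrations for $\Pi_\varphi\subseteq\filt\{P_\psi\mid r(\psi)=\varphi\}$. The differences are minor: you make well-definedness and surjectivity of $r$ explicit, and in the last step you put every HN factor directly into $\Pi_\varphi$, whereas the paper handles only $A_1,A_n$ and then uses monotonicity of $r$. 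Your phrase ``subquotients landing in the correct classes'' is too loose on its own, since torsion classes are not closed under subobjects. Justify it instead by noting that $\cT_{\uparrow\varphi},\cT_{\uparrow\varphi\setminus\{\varphi\}}\in\cS=\eta(\Psi)$, so membership in them is governed by $\Psi$-phases, and the phases of the HN factors lie between $\phi^-_\Psi(X)$ and $\phi^+_\Psi(X)$.
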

	
	\begin{proof}
		Let $\left(\Phi, \{\Pi_{\varphi}\}_{\varphi\in \Phi}\right)$ be a stability data on $\cA$, and denote by $\cS_{\Phi} := \{\cT_F \mid F \in \filter_0(\Phi)\}$ the chain of torsion classes induced by $\Phi$. Consider the collection
		\begin{equation}
			\cC := \{\cS \mid \cS \text{ is a chain in } \tors\cA \text{ containing } \cS_{\Phi}\}.
		\end{equation}
		Partially ordered by inclusion, $\cC$ is non-empty, and every chain in $\cC$ has an upper bound (given by the union). By Zorn's Lemma, $\cC$ contains a maximal element $\cS_0 = \{\cT_{\alpha}\}_{\alpha \in J}$, which is a maximal green sequence. Let $\left(\Psi, \{P_\psi\}_{\psi \in \Psi}\right)$ be the finest stability data corresponding to $\cS_0$ under the bijection $\eta$ (as in Proposition~\ref{prop:mgs to finset stability data}). Then $\filter_0(\Phi)$ is a sub-lattice of $\filter_0(\Psi)$.
		
		We now show that $\left(\Psi, \{P_\psi\}_{\psi \in \Psi}\right)$ refines $\left(\Phi, \{\Pi_{\varphi}\}_{\varphi \in \Phi}\right)$. Define a map $r \colon \Psi \to \Phi$ as follows: for each $\psi \in \Psi$, choose a nonzero object $X \in P_\psi = \cT_{\uparrow \psi} \cap \cF_{\uparrow \psi \setminus \{\psi\}}$. Consider the Harder--Narasimhan filtration of $X$ with respect to $\left(\Phi, \{\Pi_{\varphi}\}_{\varphi \in \Phi}\right)$ (as constructed in the proof of Proposition~\ref{prop:mgs to finset stability data}). Let $\cT_{\uparrow \varphi}$ be the minimal torsion class in $\{\cT_F \in \cS_\Phi \mid X \in \cT_F\}$, which is completely join-irreducible in $\filter_0(\Phi)$ and satisfies $\varphi = \phi^-(X)$ under $\left(\Phi, \{\Pi_{\varphi}\}_{\varphi \in \Phi}\right)$. Then $\cT_{\uparrow \psi} \subseteq \cT_{\uparrow \varphi}$, so $\uparrow \psi \leq \uparrow \varphi$ in $\filter_0(\Psi)$. Suppose that $\cT_{\uparrow \varphi}$ covers $\cT_{F_0}$ in $\cS_\Phi$. Since $X \in \cT_{\uparrow \psi}$ but $X \notin \cT_{F_0}$, we have $F_0 < \uparrow \psi$, and hence $F_0 \leq \uparrow \psi \setminus \{\psi\}$. This implies $\cF_{\uparrow \psi \setminus \{\psi\}} \subseteq \cF_{F_0}$, and thus $P_\psi \subseteq \Pi_{\varphi}=\cT _{\uparrow \varphi}\cap \cF_{F_0}$. Define $r(\psi) = \varphi$. In other words, each nonzero object in $P_{\psi}$ is semistable and has phase $\varphi$ with respect to $\left(\Phi, \{\Pi_{\varphi}\}_{\varphi \in \Phi}\right)$. Suppose $\psi_1<\psi_2$ and $X_i\in P_{\psi_i}$ where $i=1,2$. By our construction, $\cT_{\uparrow r(\psi_1)}$ is the minimal torsion class in $\{\cT_{F}\in \cS_{\Phi}\mid X_1\in \cT_F\}$. Note that $X_1\in \cT_{F}$ implies $X_2\in \cT_{F}$, we have $X_2\in \cT_{\uparrow r(\psi_1)}$ and hence $\cT_{\uparrow  r(\psi_2)}\subset \cT_{\uparrow r(\psi_1)}$ by definition. Therefore $r(\psi_1)\leq r(\psi_2)$, i.e. $r$ is order-preserving.
		
		Conversely, for any $\varphi \in \Phi$ and any nonzero object $X \in \Pi_{\varphi} = \cT_{\uparrow\varphi} \cap \cF_{F_0}$ (where $\uparrow\varphi$ covers $F_0$ in $\filter_0(\Phi)$), consider the Harder--Narasimhan filtration of $X$ with respect to $\left(\Psi, \{P_\psi\}_{\psi \in \Psi}\right)$:
		\begin{equation}
			0 = X_0 \subset X_1 \subset \cdots \subset X_n = X,
		\end{equation}
		with factors $A_i = X_i/X_{i-1} \in P_{\psi_i}$ and phases $\psi_1 > \psi_2 > \cdots > \psi_n$. We claim that each $A_i$ has phase $\varphi$ with respect to $\left(\Phi, \{\Pi_{\varphi}\}_{\varphi \in \Phi}\right)$.
		
		First, note that $A_n \in \cT_{\uparrow\varphi}$ because $X \in \cT_{\uparrow\varphi}$. If $A_n$ were in $\cT_{F_0}$, then since $\psi_i > \psi_n$ for all $i < n$, we would have $A_i \in \cT_{F_0}$ for all $i$, implying $X \in \cT_{F_0}$, a contradiction. Hence, $A_n$ has phase $\varphi$. Dually, by considering the torsion-free parts, we conclude that $A_1$ (and hence every $A_i$) has phase $\varphi$ since $r$ is order-preserving. Therefore, $r(\psi_i) = \varphi$ for all $i$.
		
		We conclude that $\left(\Psi, \{P_\psi\}_{\psi \in \Psi}\right)$ refines $\left(\Phi, \{\Pi_{\varphi}\}_{\varphi \in \Phi}\right)$ and we finish the proof.
	\end{proof}
	
	\subsection{Correspondence with complete backward \texorpdfstring{$\hom$}{hom}-orthogonal sequences}
	
	As discussed in \cite{BCZ} and \cite{DIRRT}, bricks play a central role in the representation theory, especially in the study of torsion classes. And the relation between (classical) maximal green sequences and certain chains of bricks were discussed in \cite{LL}. In this section, we will investigate the relation between bricks and the stability data in an abelian category.
	
	\begin{definition}
		Let $\cA$ be an abelian category. An object $B$ of $\mathcal{A}$ is called a \textbf{brick} if \  ${\mathrm End}_{\cA} B$ is a division ring.
	\end{definition}
	
	For each semistable subcategory $\Pi_{\varphi}$, we have the following lemma which can be compared with \cite[Proposition 3.7]{Tr}, which says that if $B$ is both relatively simple and relatively cosimple, then $B$ is a brick.
	
	\begin{lemma}\label{lem:finset stability data to brick}
		Suppose that $\mathcal{A}$ is an abelian category, $\left(\Phi,\left\{\Pi_{\varphi}\right\}_{\varphi \in \Phi}\right)$ is a stability data on $\mathcal{A}$.
		\begin{enumerate}
			\item If $B\in \Pi_{\varphi}$ has no nontrivial sub-object in $\Pi_{\varphi}$, then $B$ is a brick;
			\item If $B\in \Pi_{\varphi}$ has no nontrivial quotient object in $\Pi_{\varphi}$, then $B$ is a brick;
		\end{enumerate}
	\end{lemma}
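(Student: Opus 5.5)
We prove (1); (2) is dual. The plan is to show that every nonzero endomorphism $f\colon B\to B$ is an isomorphism, so that $\operatorname{End}_{\cA}B$ is a division ring. The key input is that $\Pi_{\varphi}$ is stable under the images of morphisms between its own objects. This follows from Lemma~\ref{lem:phase in exact sequence} and the Hom-vanishing axiom of stability data.

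First we show that $\im f\in\Pi_{\varphi}$ for any nonzero $f\colon B\to B$. Apply Lemma~\ref{lem:phase in exact sequence}(1) to $0\to \im f\to B\to \coker f\to 0$ to get $\phi^+(\im f)\le \phi^+(B)=\varphi$. Apply it again to $0\to\ker f\to B\to \im f\to 0$ to get $\phi^-(\im f)\ge\phi^-(B)=\varphi$. Since $\phi^-\le\phi^+$ always holds, $\phi^+(\im f)=\phi^-(\im f)=\varphi$, and hence $\im f\in\Pi_\varphi$.

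Next, $\im f$ is a nonzero subobject of $B$ lying in $\Pi_\varphi$. By the hypothesis of (1) it cannot be a proper subobject, so $\im f=B$ and $f$ is an epimorphism. To get injectivity, note that $B$ has finite length, because this lemma is applied in a length category. An epimorphism $B\to B$ then has kernel of length $\ell(B)-\ell(\im f)=0$, so $f$ is an isomorphism.

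The main obstacle is that injectivity relies on finite length. In an arbitrary abelian category I would instead try to show $\ker f\in\Pi_\varphi$ directly, so that the hypothesis forces $\ker f=0$. Since $\ker f$ is a subobject of $B$, we have $\phi^+(\ker f)\le\varphi$. For the lower bound, apply Lemma~\ref{lem:phase in exact sequence}(2) to $0\to\ker f\to B\to \im f\to 0$ with $\im f\in\Pi_\varphi$. This gives no direct bound on $\phi^-(\ker f)$. Instead, take the minimal semistable quotient $\ker f\to Q$ of phase $\psi<\varphi$ and push out along $\ker f\to B$. This yields an extension of $B/\ker f\cong B$-type objects of phase $\ge\varphi$ by $Q$, which contradicts $\phi^-(B)=\varphi$ via the HN uniqueness. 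If this pushout argument becomes messy, I will simply invoke the finite-length assumption in force for $\cA$ in this section.
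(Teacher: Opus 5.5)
Your proposal is correct and follows the paper's proof: both apply Lemma~\ref{lem:phase in exact sequence}(1) to the kernel--image and image--cokernel sequences to get $\im f\in\Pi_\varphi$, and then use that $B$ has no nontrivial subobject in $\Pi_\varphi$. You are more careful than the paper, whose proof tacitly assumes that a nonzero non-isomorphism has a \emph{proper} image. Your finite-length step covers the remaining case of an epimorphism that is not a monomorphism, and invoking it is legitimate because the paper's standing convention is that $\cA$ is an abelian length category.

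Drop the final paragraph, though. The pushout sketch cannot be completed, because the lemma is false in a general abelian category. In abelian groups, the torsion pair (divisible, reduced) gives a stability data with $\Phi=\{1<2\}$, where $\Pi_2$ is the divisible groups and $\Pi_1$ the reduced ones. The Pr\"ufer group $B=\mathbb{Z}(p^\infty)\in\Pi_2$ has no divisible subgroups other than $0$ and $B$. Yet multiplication by $p$ is surjective with kernel $\mathbb{Z}/p$, so $\operatorname{End}B$ (the $p$-adic integers) is not a division ring.

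In your construction, $Q=\ker f\cong\mathbb{Z}/p$ is semistable of phase $1<2$, and the pushout is just the original sequence $0\to\mathbb{Z}/p\to B\to B\to 0$. Nothing is contradicted, because Lemma~\ref{lem:phase in exact sequence} bounds only $\phi^+$ of a subobject, not $\phi^-$. Dually, $\mathbb{Z}$ in the torsion-free half of the pair (torsion, torsion-free) refutes (2). So some finiteness hypothesis is genuinely needed: noetherianity of $B$ suffices for (1), and artinianity for (2).
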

	
	\begin{proof}
		We only prove (1). Suppose that $f\in {\mathrm End}_{\cA} B$ is not an isomorphism. Consider the usual kernel-image and image-cokernel exact sequences in which $I$ is nontrivial:
		\begin{align}
			0 \rightarrow K \rightarrow B \rightarrow I \rightarrow 0 \\
			0 \rightarrow I \rightarrow B \rightarrow C \rightarrow 0
		\end{align}
		By Lemma \ref{lem:phase in exact sequence}, we have $\phi^-(I)\ge \phi^-(B)$ and $\phi^+(B)\ge \phi^+(I)$. Note that $B$ is a semistable object in $\Pi_{\varphi_0}$, the following inequalities hold:
		\begin{equation}
			\phi^+(I)\ge \phi^-(I)\ge \phi^-(B) =\varphi_0= \phi ^+(B)\ge \phi^+(I)
		\end{equation}
		Then we have that  $\phi^-(I)=\phi^+(I)$. Thus $I$ is a semistable object of phase $\varphi_0$ and lies in $\Pi_{\varphi_0}$. This contradicts to the simplicity of $B$.
	\end{proof}
	
	\begin{proposition}\label{prop:finset stability data to chain of brick}
		Suppose that $\cA$ is an abelian length category. $\left(\Phi,\left\{\Pi_{\varphi}\right\}_{\varphi \in \Phi}\right)$ is a stability data on $\cA$. Then $\left(\Phi,\left\{\Pi_{\varphi}\right\}_{\varphi \in \Phi}\right)$ is finest if and only if for any $\varphi\in \Phi$, $\Pi_{\varphi}$ is filtered by exactly one brick.
	\end{proposition}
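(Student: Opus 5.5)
We use the criterion of Theorem \ref{thm:finest stability data} together with Lemma \ref{lem:finset stability data to brick}, working inside a single semistable subcategory $\Pi_{\varphi}$. The first thing to record is that $\Pi_{\varphi}$ is closed under extensions, and that in an abelian length category every nonzero $X\in\Pi_{\varphi}$ has a finite filtration whose factors are \emph{relatively simple} objects of $\Pi_{\varphi}$. By this we mean nonzero objects of $\Pi_{\varphi}$ with no nontrivial subobject in $\Pi_{\varphi}$. To get the filtration, pick a nonzero subobject of $X$ lying in $\Pi_{\varphi}$ of minimal length; such subobjects exist because $X$ itself is one. The quotient by it is again semistable of phase $\varphi$. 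Indeed, by Lemma \ref{lem:phase in exact sequence}(1) we have $\phi^-(\text{quotient})\ge\varphi$. By Lemma \ref{lem:phase in exact sequence}(2) applied to the HN filtration, together with the Hom-vanishing from $\Pi_{\varphi}$ to phases $<\varphi$, we also get $\phi^+(\text{quotient})\le\varphi$. Induction on length then gives the filtration. By Lemma \ref{lem:finset stability data to brick}(1) each relatively simple object is a brick. Hence $\Pi_{\varphi}=\filt(\text{relatively simple objects of }\Pi_{\varphi})$ always holds, and the content of the proposition is that there is exactly one relatively simple object up to isomorphism.

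``Only if'': assume the data is finest and let $B,B'$ be two relatively simple objects of $\Pi_{\varphi}$. By Theorem \ref{thm:finest stability data}, there is a nonzero $f\colon B\to B'$. The argument in the proof of Lemma \ref{lem:finset stability data to brick} shows that $\im f$ is semistable of phase $\varphi$. It is a nonzero subobject of $B'$, so $\im f=B'$ by relative simplicity of $B'$. Applying the same reasoning to $\ker f$ requires some care. The kernel of a morphism between objects of $\Pi_{\varphi}$ lies in $\Pi_{\varphi}$: Lemma \ref{lem:phase in exact sequence} gives $\phi^+(\ker f)\le\varphi$. For the other inequality, $\phi^-(\ker f)\ge\min\{\varphi,\ \cdot\}$ is obtained from the exact sequence $0\to\ker f\to B\to\im f\to 0$ and Hom-vanishing, using that the minimal HN quotient of $\ker f$, if of phase $<\varphi$, would receive a nonzero map forcing a contradiction via the pushout. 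Consequently $\ker f$ is either $0$ or all of $B$, so $f$ is an isomorphism. Thus there is a unique relatively simple brick $B_{\varphi}$, and $\Pi_{\varphi}=\filt(B_{\varphi})$.

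``If'': suppose $\Pi_{\varphi}=\filt(B)$ with $B$ a brick. Take nonzero $X,Y\in\Pi_{\varphi}$. Then $X$ has a filtration with all factors isomorphic to $B$, so $B$ is a quotient of $X$ (the top factor) and $B$ is a subobject of $Y$ (the bottom factor). The composite $X\twoheadrightarrow B\hookrightarrow Y$ is therefore nonzero, and by symmetry $\hom(Y,X)\ne0$ as well. Theorem \ref{thm:finest stability data} then shows the data is finest.

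The main obstacle is the abelian-ness of $\Pi_{\varphi}$, namely closure under kernels and cokernels of morphisms inside $\Pi_{\varphi}$. Both the reduction to relatively simple objects and the injectivity of $f$ in the ``only if'' part depend on it. The plan is to prove this as a preliminary claim. For a morphism $g\colon X\to Y$ in $\Pi_{\varphi}$, the image $\im g$ is semistable, as in the proof of Lemma \ref{lem:finset stability data to brick}. To handle $\ker g$, let $Q$ be its minimal HN quotient and suppose $\phi(Q)<\varphi$. Pushing out $0\to\ker g\to X\to\im g\to0$ along $\ker g\to Q$ produces an object $E$ that is a quotient of $X$, so $\phi^-(E)\ge\varphi$. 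On the other hand, $E$ is an extension of $\im g$ by $Q$, and since the minimal HN quotient of $E$ must have phase $\ge\varphi$ while $Q\subset E$ has phase $<\varphi$, this yields a contradiction. Cokernels are handled dually.
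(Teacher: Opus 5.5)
Your ``if'' direction is correct and coincides with the paper's. The gap is in ``only if''. Both preliminary claims you rely on are false for an \emph{arbitrary} stability data, and your sketches never use finest-ness. The first claim is that the quotient of $X\in\Pi_\varphi$ by a minimal-length nonzero subobject lying in $\Pi_\varphi$ stays in $\Pi_\varphi$, so that $\Pi_\varphi$ is always filtered by its relatively simple objects. The second is that $\Pi_\varphi$ is closed under kernels and cokernels. The Hom axiom only kills maps from higher to lower phase. So an object of $\Pi_\varphi$ may surject onto something of higher phase, and a lower-phase $Q$ sitting inside $E$ contradicts nothing.

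Here is a concrete counterexample. Take representations of $1\to 2$, with indecomposables $S_1$, $S_2$, $M=(K\to K)$, so $0\to S_2\to M\to S_1\to 0$ and $\hom(S_1,M)=0=\hom(M,S_2)$. Let $\Phi=\{a<b\}$, $\Pi_b=\add S_1$, $\Pi_a=\add\{S_2,M\}=S_1^{\perp}$. This is a stability data: the HN filtrations are the canonical sequences of the torsion pair $(\Pi_b,\Pi_a)$. Here $S_2$ is the minimal-length subobject of $M$ in $\Pi_a$, but $M/S_2\cong S_1$ has phase $b>a$. The only relatively simple object of $\Pi_a$ is $S_2$, and $\filt(S_2)=\add S_2\neq\Pi_a$. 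Similarly, take $\Pi_a=\add S_2$, $\Pi_b=\add\{S_1,M\}$. Then the map $M\to S_1$ in $\Pi_b$ has kernel $S_2\in\Pi_a$. In your pushout argument $E=M$ contains $Q=S_2$ of phase $a<b$, and there is no contradiction. These data are not finest, which is consistent with the proposition. Wideness of $\Pi_\varphi$ does hold for finest data, but the paper deduces it as a corollary of this proposition, so it cannot be assumed here.

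Finest-ness must therefore be used at both places.

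\emph{Injectivity of $f$.} Let $f\colon B\to B'$ be a nonzero map between relatively simple objects. Theorem \ref{thm:finest stability data} also gives a nonzero $g\colon B'\to B$, which is epic by your image argument. Then $gf$ is a nonzero endomorphism of the brick $B$, hence invertible, so $f$ is monic.

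\emph{The equality $\Pi_\varphi=\filt(B_\varphi)$.} Given $X\in\Pi_\varphi$, let $B_0\subseteq X$ be a minimal-length nonzero subobject in $\Pi_\varphi$, so $B_0\cong B_\varphi$. Put $C=X/B_0$; then $\phi^-(C)\ge\varphi$. Suppose $\psi:=\phi^+(C)>\varphi$. Let $C_1\subseteq C$ be the maximal semistable subobject and $X'\subseteq X$ its preimage. Lemma \ref{lem:phase in exact sequence} gives $X'\in\Pi_\varphi$, so finest-ness gives a nonzero $h\colon X'\to B_0$.
If $h|_{B_0}\neq 0$, it is an automorphism of the brick $B_0$. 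Then $X'\cong B_0\oplus C_1$, and $C_1\subseteq X'$ forces $\psi\le\varphi$, a contradiction.
If $h|_{B_0}=0$, then $h$ induces a nonzero map $C_1\to B_0$, contradicting $\hom(\Pi_\psi,\Pi_\varphi)=0$.
Hence $C=0$ or $C\in\Pi_\varphi$, and induction on length finishes.

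These are essentially the ingredients of the paper's proof: a minimal-length $B$, a mono $B\to X$ and an epi $X\to B$ from finest-ness, the brick dichotomy for $gf$, and Hom-vanishing against the higher-phase part of $X/B$.
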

	
	\begin{proof}
		For the ``if'' part, suppose $\varphi\in\Phi$ and $\Pi_{\varphi}=\filt\left\{B_{\varphi}\right\}$. Then for any two nonzero objects $X,Y\in \Phi_{\varphi}$, the composition of morphisms $X\twoheadrightarrow B_{\varphi}\hookrightarrow Y$ is nonzero. By Theorem \ref{thm:finest stability data}, $\left(\Phi,\left\{\Pi_{\varphi}\right\}_{\varphi \in \Phi}\right)$ is finest.
		
		For the ``only if'' part, fix a semistable subcategory $\Pi_{\varphi}$ of $\cA$. Since $\cA$ is an abelian length category, for each nonzero $\Pi_{\varphi}$, there exists an object $B\in \Pi_{\varphi}$ such that $B$ has minimal length. Thus $B$ has no nontrivial subobject in $\Pi_{\varphi}$. By Lemma \ref{lem:finset stability data to brick}, $B$ is a brick. Since $\left(\Phi,\left\{\Pi_{\varphi}\right\}_{\varphi \in \Phi}\right)$ is finest, for any $X\in \Pi_{\varphi}$, there exists nonzero morphism $f: B\to X$ and $g: X\to B$ by Theorem \ref{thm:finest stability data}. 
		
		We claim that $f$ is monic and $g$ is epic. Consider the canonical decomposition of $f: B\twoheadrightarrow \im f\hookrightarrow X$. Since $\im f$ is a quotient object of $B$, we have $\phi^-(\im f)\ge \phi(B)=\varphi$. Similarly, $\phi^+(\im f)\leq \phi(X)=\varphi$. Combine these two inequalities, we have $\phi^+(\im f)=\phi^-(\im f)=\varphi$ and thus $\im f\in \Pi_{\varphi}$. By the choice of $B$, we have $B\cong \im f$, otherwise the length of $\im f$ is strictly less than that of $B$ and this is a contradiction. Therefore, $f$ is monic and dually, $g$ is epic. Then we have proven the claim.
		
		Next we prove that $B$ is the unique brick in $\Pi_{\varphi}$. Suppose that $B'$ is another brick, then there exists a monomorphism $f:B\to B'$ and an epimorphism $g:B'\to B$. Thus $g\circ f\in {\rm End}_{\cA} B'$ is nonzero and therefore an isomorphism. Then $B'\cong B$ and the uniqueness follows.
		
		Finally, we show that $\Pi_{\varphi}=\filt\left\{B\right\}$. The direction ``$\supset$'' holds since $\Pi_{\varphi}$ is extension-closed. For any nonzero indecomposable object $X\in \Pi_{\varphi}$ and $X\not\cong B$, let $f,g$ be as defined above. Since $gf\in {\rm End}_{\cA}B$, $gf$ is either $0$ or an isomorphism. If $gf={\rm id}_{S}$, then $f$ is a section and $X\cong B\oplus C$ which is a contradiction. Thus $gf=0$. We have the following commutative diagram:
		\begin{equation}
			\begin{tikzcd}
				&& 0 & 0 \\
				0 & B & K & M & 0 \\
				0 & B & X & C & 0 \\
				&& B & B \\
				&& 0 & 0
				\arrow[from=1-3, to=2-3]
				\arrow[from=1-4, to=2-4]
				\arrow[from=2-1, to=2-2]
				\arrow[from=2-2, to=2-3]
				\arrow[equals, from=2-2, to=3-2]
				\arrow[from=2-3, to=2-4]
				\arrow["\iota", from=2-3, to=3-3]
				\arrow[from=2-4, to=2-5]
				\arrow[from=2-4, to=3-4]
				\arrow[from=3-1, to=3-2]
				\arrow["f", from=3-2, to=3-3]
				\arrow["\pi", from=3-3, to=3-4]
				\arrow["g", from=3-3, to=4-3]
				\arrow[from=3-4, to=3-5]
				\arrow[from=3-4, to=4-4]
				\arrow[equals, from=4-3, to=4-4]
				\arrow[from=4-3, to=5-3]
				\arrow[from=4-4, to=5-4]
			\end{tikzcd}
		\end{equation}
		Then by definition, we have $\varphi=\phi^-(X)\leq \phi^-(C)\leq \phi^-(B)=\varphi$ and thus $\phi^-(C)=\varphi$. Assume, if possible, that $\phi^+(C)>\varphi$, let $D$ be the minimal semistable quotient object of $C$ and $p:C\rightarrow D$ be the quotient morphism. Then $\ker p\ne 0$ and $\phi^-(\ker p)>\varphi$. Consider the pullback of $0\rightarrow B\xrightarrow{f}X\xrightarrow{\pi}C\rightarrow 0$ along the morphism $\ker p\hookrightarrow C$:
		\begin{equation}
			\begin{tikzcd}
				&& 0 & 0 \\
				0 & B & {X'} & {\ker p} & 0 \\
				0 & B & X & C & 0 \\
				&& D & D \\
				&& 0 & 0
				\arrow[from=1-3, to=2-3]
				\arrow[from=1-4, to=2-4]
				\arrow[from=2-1, to=2-2]
				\arrow[from=2-2, to=2-3]
				\arrow[equals, from=2-2, to=3-2]
				\arrow[from=2-3, to=2-4]
				\arrow[from=2-3, to=3-3]
				\arrow[from=2-4, to=2-5]
				\arrow[from=2-4, to=3-4]
				\arrow[from=3-1, to=3-2]
				\arrow["f", from=3-2, to=3-3]
				\arrow["\pi", from=3-3, to=3-4]
				\arrow[from=3-3, to=4-3]
				\arrow[from=3-4, to=3-5]
				\arrow["p", from=3-4, to=4-4]
				\arrow[equals, from=4-3, to=4-4]
				\arrow[from=4-3, to=5-3]
				\arrow[from=4-4, to=5-4]
			\end{tikzcd}
		\end{equation}
		Then again by Lemma \ref{lem:phase in exact sequence} (1), we have $\varphi=\phi^+(B)\leq \phi^+(X')\leq \phi^+(X)=\varphi$, and thus $\phi^+(X')=\varphi$. In addition, by Lemma \ref{lem:phase in exact sequence} (2), we have $\phi^-(X')\ge \min\left\{\phi^-(B),\phi^-(\ker p)\right\}=\varphi$ and thus $X'\in \Pi_{\varphi}$. By induction on the length of $X$ and use the hypothesis, we know that $X\in \filt\left\{B\right\}$.
	\end{proof}
	
	\begin{corollary}
		Suppose that $\cA$ is an abelian length category. $\left(\Phi,\left\{\Pi_{\varphi}\right\}_{\varphi \in \Phi}\right)$ is a finest stability data on $\cA$. Then $\Pi_{\varphi}$ is a wide subcategory of $\cA$ for any $\varphi\in \Phi$, i.e. it is closed under kernels, cokernels and extensions.
	\end{corollary}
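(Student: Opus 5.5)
By Proposition \ref{prop:finset stability data to chain of brick}, each semistable subcategory of a finest stability data has the form $\Pi_{\varphi}=\filt\{B\}$ for a unique brick $B$. So the plan is to show that, for a brick $B$ lying in a stability data in this way, $\filt\{B\}$ is closed under kernels and cokernels. Extension-closedness holds by definition, since $\Pi_{\varphi}$ is extension-closed as part of the stability data. The tools are the phase inequalities of Lemma \ref{lem:phase in exact sequence} together with the brick property of $B$.

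\textbf{Step 1: images of morphisms in $\Pi_{\varphi}$ lie in $\Pi_{\varphi}$.} Take a morphism $h:X\to Y$ in $\Pi_{\varphi}$ and let $I=\im h$. Since $I$ is a quotient of $X$, Lemma \ref{lem:phase in exact sequence}(1) gives $\phi^-(I)\ge\varphi$. Since $I$ is a subobject of $Y$, the same lemma gives $\phi^+(I)\le\varphi$. Hence $I\in\Pi_{\varphi}$ whenever $I\neq 0$; this is the argument already used in the proof of Proposition \ref{prop:finset stability data to chain of brick}.

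\textbf{Step 2: kernels and cokernels.} I will first treat the kernel $K=\ker(X\to I)$ of the epimorphism $X\to I$, and then the cokernel $C=\coker(I\to Y)$ of the monomorphism $I\to Y$. These two cases suffice. For $K$, Lemma \ref{lem:phase in exact sequence}(1) already gives $\phi^+(K)\le\varphi$, so it remains to show $\phi^-(K)\ge\varphi$, i.e.\ that $K$ has no HN-factor of phase $<\varphi$.

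Suppose instead that $\phi^-(K)<\varphi$. Let $D$ be the minimal semistable quotient of $K$, so $\phi(D)<\varphi$, and push the sequence $0\to K\to X\to I\to 0$ out along $K\to D$. This yields an exact sequence
\begin{equation}
0\to D\to X'\to I\to 0
\end{equation}
together with an epimorphism $X\to X'$. From $X\to X'$ we get $\phi^-(X')\ge\varphi$, while $D\subseteq X'$ has phase $<\varphi$.

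Deriving a contradiction from this configuration is the main obstacle. A subobject may have smaller phase than its ambient object, so the naive inequality gives nothing, and a finer argument is needed. My plan is induction on length using $\Pi_{\varphi}=\filt\{B\}$. Every object of $\Pi_{\varphi}$ has a subobject $B$ and a quotient $B$, and any nonzero morphism $B\to Z$ with $Z\in\Pi_\varphi$ is mono by the minimal-length argument in Proposition \ref{prop:finset stability data to chain of brick}. Dually, any nonzero morphism $Z\to B$ is epi.

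First I reduce to the case where the target is $B$. Filter $Y$ by copies of $B$ and compose $h$ with the projections onto the successive quotients. The last nonzero composite $X\to B$ is an epimorphism whose kernel $X''$ sits in
\begin{equation}
0\to X''\to X\to B\to 0.
\end{equation}
I then show $X''\in\Pi_{\varphi}$ as follows. Lemma \ref{lem:phase in exact sequence}(1) gives $\phi^+(X'')\le\varphi$. For the other bound, if $\phi^-(X'')<\varphi$, pass to the minimal semistable quotient and pushout as above. This produces a nonsplit extension of $B$ by an object of phase $<\varphi$ that is a quotient of $X$.

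Here I expect to need an extra tool: the filtration $\Pi_{\varphi}=\filt\{B\}$ combined with Hom-vanishing $\hom(\Pi_{\varphi},D)=0$ for $\phi(D)<\varphi$. That vanishing forces the map $X\to X'\to$ (top factor) to factor so that the copy of $B$ inside $X$ maps to zero in $D$. Iterating along the $B$-filtration of $X$, every factor should map to zero, contradicting the surjectivity of $K\to D$.

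With $X''\in\Pi_{\varphi}$ established, induction on length handles general kernels, and the cokernel case follows by the dual argument using the maximal semistable subobject and pullbacks.
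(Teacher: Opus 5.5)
Your proposal is correct in outline, but it takes a different route from the paper. The paper's proof is a two-line citation. Proposition~\ref{prop:finset stability data to chain of brick} gives $\Pi_\varphi=\filt\{B_\varphi\}$, and \cite[Theorem 2.5]{En} then says that the filtration closure of a brick is wide. This is a form of Ringel's lemma, which holds for any set of pairwise Hom-orthogonal bricks and needs no stability data.

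You instead reprove that fact for a single brick inside the stability framework, using phases to produce a test object $D$. The step you left as ``should'' does go through, but you should write it as an explicit induction on $i$. Suppose you have a surjection $X/X_i\twoheadrightarrow X'$. The composite $B\cong X_{i+1}/X_i\to X/X_i\to X'\to B$ is an endomorphism of the brick $B$, so it is zero or invertible.
\begin{itemize}
\item If it is invertible, then $X'$ splits and you get a surjection $X\twoheadrightarrow D$. This is impossible since $\hom(\Pi_\varphi,D)=0$; this is where your nonsplitness remark enters.
\item If it is zero, the copy of $B$ lands in $D$, hence maps to zero. The surjection then descends to $X/X_{i+1}$.
\end{itemize}
At $i=r$ you reach $0\twoheadrightarrow X'\neq 0$, a contradiction.

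Likewise, the phrase ``compose $h$ with the projections onto the successive quotients'' needs to be made precise. Take the least $i$ with $h(X)\subseteq Y_i$. Then $X\to Y_i/Y_{i-1}\cong B$ is nonzero, hence epi, and $\ker h=\ker(X''\to Y_{i-1})$ with both objects of smaller length, so induction applies. The cokernel case dualizes the same way.

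As for what each approach buys: your version is self-contained, while the paper's is shorter and more general. However, the phase machinery is not doing the real work. The same induction along the $B$-filtration shows directly that $\ker(X\twoheadrightarrow B)\in\filt\{B\}$. If the bottom copy of $B$ maps isomorphically onto $B$, split it off; otherwise it lies in the kernel and you pass to $X/X_1$. This needs no pushout or minimal semistable quotient, and it is exactly Ringel's argument.
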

	
	\begin{proof}
		By Proposition \ref{prop:finset stability data to chain of brick}, we know that for any $\varphi\in \Phi$, $\Pi_{\varphi}=\filt\left\{B_{\varphi}\right\}$ for some brick $B_{\varphi}\in \Pi_{\varphi}$. We've known from \cite[Theorem 2.5]{En} that $\filt\left\{B_{\varphi}\right\}$ is a wide subcategory of $\cA$.
	\end{proof}
	
	\begin{definition}
		Suppose $\cA$ is an abelian category, $\left\{B_\varphi\right\}_{\varphi\in \Phi}$ is a set of bricks in $\cA$ where $\Phi$ is a totally ordered set. We say $\left\{B_\varphi\right\}_{\varphi\in \Phi}$ is a \textbf{backward $\hom$-orthogonal sequence} if $\hom_{\cA}(B_{\varphi'},B_{\varphi})=0$ for any $\varphi<\varphi'$. It is called \textbf{complete} if it cannot be embedded into a longer backward $\hom$-orthogonal sequence. In this case, we write it as \textbf{CBHO sequence} for short.
	\end{definition}
	
	The following lemma can be viewed as a generalization of \cite[Lemma 2.5]{Ig1} which discuss on the finite case.
	
	\begin{lemma}
		Suppose $\left\{B_\varphi\right\}_{\varphi\in \Phi}$ is a CBHO sequence in an abelian category $\cA$ and $S\in \cA$ is a simple object. If $S\notin \left\{B_\varphi\right\}_{\varphi\in \Phi}$, then $\hom_{\cA}(B_{\varphi}, S)=0$ for all $\varphi\in \Phi$. In particular, any CBHO sequence must contain every simple object in $\cA$.
	\end{lemma}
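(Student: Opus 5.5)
Suppose $S\notin\{B_\varphi\}_{\varphi\in\Phi}$ and, towards a contradiction, that $\hom_{\cA}(B_{\varphi},S)\neq 0$ for some $\varphi$. The idea is to insert $S$ into the sequence at a suitable place and obtain a strictly longer backward $\hom$-orthogonal sequence, contradicting completeness. The key observation is that a simple object is itself a brick (its endomorphism ring is a division ring by Schur's lemma), and that any nonzero morphism $B_\varphi\to S$ is an epimorphism, while any nonzero morphism $S\to B_\varphi$ is a monomorphism.

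The plan is as follows. Let $U=\{\varphi\in\Phi\mid \hom_{\cA}(B_\varphi,S)\neq 0\}$, nonempty by assumption. I first claim that for $\varphi\in U$ and any $\varphi'>\varphi$ we have $\hom_{\cA}(S,B_{\varphi'})=0$: indeed, composing an epimorphism $B_{\varphi'}\to$? — more carefully, if $g:S\to B_{\varphi'}$ is nonzero (hence monic) and $f:B_\varphi\to S$ is nonzero (hence epic), then $g\circ f:B_\varphi\to B_{\varphi'}$ is nonzero; this is allowed since $\varphi<\varphi'$, so instead I use the reverse composition. Take $\varphi\in U$ and $\varphi'<\varphi$; if $\hom_{\cA}(B_{\varphi'},S)\ne0$ as well, nothing is violated, so the correct insertion point is determined as follows. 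Define the cut $\Phi=L\sqcup R$ with $R=\{\varphi\mid \exists\,\psi\le\varphi,\ \psi\in U\}$ (the up-closure of $U$) and $L$ its complement, and insert $S$ as a new element $s$ with $L<s<R$. We must check: (a) $\hom_{\cA}(S,B_\lambda)=0$ for $\lambda\in L$, and (b) $\hom_{\cA}(B_\rho,S)=0$ for $\rho\in R$. Condition (a) is not automatic, so rather than the up-closure I will place $S$ just below the \emph{least} element of $U$ where possible; the argument I expect to use is: for $\rho\in R$ pick $\psi\in U$ with $\psi\le\rho$; if $\psi<\rho$ and $h:B_\rho\to S$ nonzero, then... this gives no contradiction directly either.

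So the cleaner route is to reverse the roles: place $S$ \emph{above} every element of $U$, i.e. set $R'=\{\rho\mid \rho>\psi \ \forall\psi\in U\}$ hmm; condition (b) then needs $\hom(B_\rho,S)=0$ for $\rho$ above $U$, which holds by definition of $U$ ($\rho\notin U$). Condition (a) needs $\hom(S,B_\lambda)=0$ for all $\lambda$ with $\lambda\le$ some $\psi\in U$. Given nonzero $g:S\to B_\lambda$ (monic) and nonzero $f:B_\psi\to S$ (epic), $g f:B_\psi\to B_\lambda$ is nonzero; if $\lambda<\psi$ this contradicts backward orthogonality, and if $\lambda=\psi$ then $gf$ is a nonzero endomorphism of the brick $B_\psi$, hence an isomorphism, so $f$ is monic, $B_\psi\cong S$, contradicting $S\notin\{B_\varphi\}$. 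Thus the extended set $\Phi\sqcup\{s\}$, with $s$ placed after the down-closure $D$ of $U$ and before $\Phi\setminus D$, is totally ordered and backward $\hom$-orthogonal, strictly containing the original sequence — contradiction with completeness. Hence $U=\emptyset$.

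For the ``in particular'': if $S$ were missing, then $\hom(B_\varphi,S)=0$ for all $\varphi$, and now inserting $S$ at the very top (after all of $\Phi$) gives a longer backward $\hom$-orthogonal sequence, again contradicting completeness. The main obstacle is choosing the correct cut of $\Phi$ (down-closure of $U$) so that both orthogonality conditions hold; the brick/Schur argument handles the boundary case $\lambda=\psi$.
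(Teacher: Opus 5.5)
Your main argument is correct and is the paper's own proof. The paper also cuts $\Phi$ at the down-closure $\downarrow P$ of $P=\{\varphi\mid \hom_{\cA}(B_\varphi,S)\neq 0\}$, which is your $D$. It inserts $S$ between $\downarrow P$ and $\Phi\setminus\downarrow P$, and checks $\hom_{\cA}(S,B_\lambda)=0$ for $\lambda\in\downarrow P$ by the same two cases: $\lambda<\psi$ via orthogonality and $\lambda=\psi$ via the brick property. Your $\lambda=\psi$ case is more careful than the paper's, since you make explicit that $gf$ being an isomorphism would force $B_\psi\cong S$. You should, however, delete the abandoned attempts (the up-closure $R$ and the misdirected composition claim) from the write-up.

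The step that fails is the ``in particular''. If $s$ is placed \emph{above} all of $\Phi$, then for every $\varphi<s$ backward orthogonality requires $\hom_{\cA}(B_s,B_\varphi)=\hom_{\cA}(S,B_\varphi)=0$. You have not established this. The first part gives $\hom_{\cA}(B_\varphi,S)=0$, which is the opposite direction. That vanishing is exactly what is needed to put $s$ \emph{below} all of $\Phi$, since for $\varphi>s$ the condition is $\hom_{\cA}(B_\varphi,B_s)=0$.

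Your own cut construction already handles this: with $U=\emptyset$ you get $D=\emptyset$, and $s$ is placed at the bottom automatically. So no separate case is needed. Running the construction without assuming $U\neq\emptyset$ shows directly that no simple object can be missing from a CBHO sequence, and the first assertion is then vacuously true. The paper's insertion at $\downarrow P$ covers both situations uniformly in the same way.
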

	
	\begin{proof}
		Suppose $S \notin \{B_{\varphi}\}_{\varphi \in \Phi}$ and
		\begin{equation}
			P := \{\varphi \in \Phi \mid \hom_{\cA}(B_{\varphi}, S) \ne 0\}
		\end{equation}
		is nonempty. We claim that for any $\varphi \in {\downarrow P}$, we have $\hom_{\cA}(S, B_{\varphi}) = 0$.
		
		To prove this, first consider the case $\varphi \in P$. If $\hom_{\cA}(S, B_{\varphi}) \ne 0$, then the composition
		\begin{equation}
			B_{\varphi} \to S \to B_{\varphi}
		\end{equation}
		yields a nonzero radical endomorphism of $B_{\varphi}$, which contradicts the assumption that $B_{\varphi}$ is a brick.
		
		Now suppose $\varphi \in {\downarrow P} \setminus P$ (if nonempty) and $\hom_{\cA}(S, B_{\varphi}) \ne 0$. Since $\varphi \in {\downarrow P}$, there exists some $\varphi_0 \in P$ such that $\varphi < \varphi_0$. Then the composition
		\begin{equation}
			B_{\varphi_0} \to S \to B_{\varphi}
		\end{equation}
		is a nonzero morphism in $\hom_{\cA}(B_{\varphi_0}, B_{\varphi})$, contradicting the $\hom$-orthogonality condition.
		
		Hence, in both cases, we must have $\hom_{\cA}(S, B_{\varphi}) = 0$.
		
		In particular, suppose $\{B_{\varphi}\}_{\varphi \in \Phi}$ is a CBHO sequence. Consider the new index set 
		\begin{equation}
			\downarrow P \sqcup \{*\} \sqcup (\Phi \setminus \downarrow P)
		\end{equation}
		with $B_* = S$, where the ordering is extended by declaring $* > \varphi$ for all $\varphi \in \downarrow P$ and $* < \varphi$ for all $\varphi \in \Phi \setminus \downarrow P$. Then this forms a CBHO sequence strictly containing $\{B_{\varphi}\}_{\varphi \in \Phi}$, which contradicts the maximality assumption.
	\end{proof}
	
	By Proposition \ref{prop:finset stability data to chain of brick}, we obtain a well-defined map
	\begin{equation}
		\xi:\left\{\text{finest stability data on }\cA\right\}\to\left\{\text{CBHO sequence in }\cA\right\}.
	\end{equation}
	We show that $\xi$ is indeed a bijection.
	
	For a CBHO sequence $\{B_\varphi\}_{\varphi\in \Phi}$ in $\cA$, the authors in \cite{DK} proved that there is a bijection
	\begin{equation}
		\zeta:\left\{\text{CBHO sequence in }\cA\right\}\to\left\{\text{maximal green sequence of }\cA\right\}
	\end{equation}
	where the maximal green sequence is indexed by $\filter(\Phi)$ and for $F\in \filter(\Phi)$, $\cT_{F}$ is the minimal torsion class which contains $\{B_\varphi\}_{\varphi\in F}$. Notice that $\zeta\circ \xi=\eta$ and $\eta$ is also a bijection, we know that $\xi$ is also a bijection.
	
	Now we give the main theorem of this article.
	
	\begin{theorem}\label{thm:correspondence of three sets}
		Let $\cA$ be an abelian length category. Then there exist natural bijections between the following three sets:
		
		\begin{enumerate}
			\item The set of finest stability data on $\cA$;
			\item The set of maximal green sequences of $\cA$;
			\item The set of CBHO sequences in $\cA$.
		\end{enumerate}
		
		More explicitly, the bijections are given by the maps
		\[
		\xi: \text{(1)} \longrightarrow \text{(3)}, \qquad
		\zeta: \text{(3)} \longrightarrow \text{(2)}, \qquad
		\eta: \text{(1)} \longrightarrow \text{(2)},
		\]
		where $\eta = \zeta \circ \xi$. All three maps are bijective, and therefore any one of these structures determines the other two uniquely.
	\end{theorem}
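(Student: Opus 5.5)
The theorem assembles results already proved, so the plan is to verify three things: each map is well defined, $\eta=\zeta\circ\xi$, and two of the maps are bijective. Bijectivity of the third then follows formally.

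\emph{Well-definedness.} For $\eta$, Theorem \ref{thm:mgs and finest stability data} gives that $\cS_\Phi=\{\cT_F\mid F\in\filter_0(\Phi)\}$ is a maximal green sequence whenever $(\Phi,\{\Pi_\varphi\})$ is finest. For $\xi$, Proposition \ref{prop:finset stability data to chain of brick} gives $\Pi_\varphi=\filt\{B_\varphi\}$ for a unique brick $B_\varphi$, and we set $\xi(\Phi,\{\Pi_\varphi\})=\{B_\varphi\}_{\varphi\in\Phi}$. Backward Hom-orthogonality is axiom (1) of stability data. For completeness, suppose a brick $B$ could be inserted at a new position $*$. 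Take the HN-filtration of $B$. Its first factor $A_1$ lies in $\filt\{B_{\varphi_1}\}$, so there is a nonzero map $B_{\varphi_1}\to B$. Its last factor gives a nonzero map $B\to B_{\varphi_n}$. Orthogonality then forces $\varphi_1<*<\varphi_n$. But $\varphi_1\ge\varphi_n$, a contradiction.

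\emph{Bijectivity of $\eta$.} Injectivity: $\Pi_\varphi$ is recovered from $\cS_\Phi$ as $\cT_{\uparrow\varphi}\cap\cF_{\uparrow\varphi\setminus\{\varphi\}}$, and $\Phi$ is recovered, up to equivalence, as the completely join-irreducible elements of $\cS_\Phi$ with the opposite order (Lemma \ref{lem:compact element} together with the dual of Proposition \ref{prop:realization of ideal}). Surjectivity: Proposition \ref{prop:mgs to finset stability data} constructs a finest stability data from any maximal green sequence $\cS$, and one checks that $\eta$ of it returns $\cS$. To check this, note that for each filter $F$ the torsion class $\filt\{\Pi_\varphi\mid\varphi\in F\}$ equals $\cT_F$. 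The inclusion $\subseteq$ is clear. For $\supseteq$, the HN-filtration built in that proof places all factors of $X\in\cT_F$ in phases belonging to $F$.

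\emph{The map $\zeta$ and the identity $\eta=\zeta\circ\xi$.} The bijectivity of $\zeta$ is the result of \cite{DK} quoted above. For the identity, fix $F\in\filter(\Phi)$. Since $\Pi_\varphi=\filt\{B_\varphi\}$, the smallest torsion class containing $\{B_\varphi\}_{\varphi\in F}$ equals the smallest torsion class containing $\bigcup_{\varphi\in F}\Pi_\varphi$, which is $\cT_F$. Both $\cS_\Phi$ and $\zeta(\xi(\Phi))$ are indexed by filters of $\Phi$, with $\emptyset$ giving $\{0\}$, so they coincide. Hence $\xi=\zeta^{-1}\circ\eta$ is a composition of bijections. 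Finally, naturality means compatibility with equivalence of stability data and reindexing of $\Phi$, and this is immediate from the constructions.

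The main obstacle I expect is the bookkeeping in $\eta^{-1}\circ\eta=\mathrm{id}$. Specifically, one must check that the completely join-irreducible elements of $\cS_\Phi$ are exactly the $\cT_{\uparrow\varphi}$, and that $\uparrow\varphi\setminus\{\varphi\}$ is the filter they cover. This works because every $\Pi_\varphi$ is nonzero, so $F\mapsto\cT_F$ is strictly monotone. The remaining steps are formal.
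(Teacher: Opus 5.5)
Your proposal is correct and follows the paper's route. The paper gets bijectivity of $\eta$ from Theorem \ref{thm:mgs and finest stability data} and Proposition \ref{prop:mgs to finset stability data}, takes bijectivity of $\zeta$ from \cite{DK}, observes $\eta=\zeta\circ\xi$, and concludes $\xi=\zeta^{-1}\circ\eta$. Your argument differs only in spelling out checks the paper leaves implicit: the completeness of $\xi(\Phi)$ via the HN-filtration of an inserted brick, injectivity of $\eta$ up to equivalence via the completely join-irreducible elements, and $\cS_\Phi=\cS$ for the stability data of Proposition \ref{prop:mgs to finset stability data}. All three checks are sound.
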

	
	\begin{remark}
		By our construction of $\eta$ and the discussion above, the map $\zeta$ can be described more explicitly: for a CBHO sequence $\{B_\varphi\}_{\varphi\in \Phi}$ in $\cA$, the corresponding maximal green sequence $\zeta(\{B_\varphi\}_{\varphi\in \Phi})$ is indexed by $\filter(\Phi)$, with
		\begin{equation}
			\cT_F = \filt\{B_{\varphi}\mid \varphi\in F\}
		\end{equation}
		for each $F \in \filter(\Phi)$.
	\end{remark}
	
	In the next section, we will give an example for theorem \ref{thm:correspondence of three sets}.
	
	\subsection{Maximal green sequence of finite dimensional algebras}
	
	In this section, let $\Lambda$ be a finite dimensional algebra over an algebraically closed field $K$, and let $\mod\Lambda$ be the category of finitely presented right $\Lambda$-modules. We consider maximal green sequences in $\mod\Lambda$, which is an abelian length category.
	
	We first recall some basic notions of $\tau$-tilting theory. For a module $M$ in $\mod\Lambda$, let $|M|$ denote the number of nonisomorphic indecomposable summands of $M$. Let $\tau$ be the AR-translation. A module $M$ is called \textbf{$\tau$-rigid} if $\hom_{\Lambda}(M,\tau M)=0$.
	
	\begin{definition}\cite{AIR}
		Let $(M,P)$ be a pair with $M\in\mod\Lambda$ and $P\in\operatorname{proj}\Lambda$.
		\begin{enumerate}
			\item[(a)] The pair $(M,P)$ is called a \textbf{$\tau$-rigid pair} if $M$ is $\tau$-rigid and $\hom_{\Lambda}(P,M)=0$.
			\item[(b)] The pair $(M,P)$ is called a \textbf{support $\tau$-tilting pair} (resp. \textbf{almost complete support $\tau$-tilting pair}) if $(M,P)$ is $\tau$-rigid and $|M|+|P|=|\Lambda|$ (resp. $|M|+|P|=|\Lambda|-1$). In this case $M$ is called a \textbf{support $\tau$-tilting module} (resp. \textbf{almost complete support $\tau$-tilting module}).
		\end{enumerate}
	\end{definition}
	
	\begin{theorem}\cite{AIR}
		Let $\Lambda$ be a finite‑dimensional algebra. Then any basic almost complete support $\tau$-tilting pair for $\Lambda$ is a direct summand of exactly two basic support $\tau$-tilting pairs. These two pairs are called \textbf{mutations} of each other.
	\end{theorem}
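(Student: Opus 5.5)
The plan is to reduce to the rank-one case by $\tau$-tilting reduction, where the count can be done by hand. Fix a basic almost complete support $\tau$-tilting pair $(U,P)$, so $|U|+|P|=|\Lambda|-1$.

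First I would record the bijection of \cite{AIR} between basic support $\tau$-tilting pairs and functorially finite torsion classes, $(M,Q)\mapsto \fac M$. Under it, the pairs containing $(U,P)$ as a direct summand correspond to functorially finite torsion classes $\cT$ with
\[
\fac U\subseteq \cT\subseteq \prescript{\perp}{}{(\tau U)}\cap P^{\perp}.
\]
The two ends of this interval supply two candidate completions.
\begin{enumerate}
\item The upper end is the Bongartz completion: the torsion class $\prescript{\perp}{}{(\tau U)}\cap P^{\perp}$ is functorially finite, so it equals $\fac(T_U)$ for a support $\tau$-tilting pair $(T_U,P)$ with $U\in\add T_U$.
\item The lower end is $\fac U$. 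It is functorially finite, so it corresponds to a support $\tau$-tilting pair $(U',Q')$ with $U\in\add U'$ and $P\in \add Q'$.
\end{enumerate}
Both pairs contain $(U,P)$. They are distinct because $U$ is not sincere relative to $P$ in the sense needed for $\fac U$ to equal the Bongartz class: equality would force $|U|+|P|=|\Lambda|$ by the counting in the bijection. This gives at least two completions.

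For "exactly two" I would use Jasso's $\tau$-tilting reduction. Set $\Gamma=\operatorname{End}_\Lambda(T_U)$ and $C=\Gamma/\langle e_U\rangle$, where $e_U$ is the idempotent corresponding to $U$. The reduction theorem gives an order-preserving bijection between support $\tau$-tilting pairs of $\Lambda$ containing $(U,P)$ (equivalently, torsion classes in the interval above) and support $\tau$-tilting pairs of $C$. Since $|T_U|+|P|=|\Lambda|$ and $|U|+|P|=|\Lambda|-1$, we get $|C|=1$, so $C$ is a local finite dimensional algebra. A local algebra has exactly two basic support $\tau$-tilting pairs, namely $(C,0)$ and $(0,C)$. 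Indeed, a $\tau$-rigid indecomposable $C$-module $M$ satisfies $\hom(M,\tau M)=0$, and over a local algebra this forces $M$ to be projective, since every nonprojective indecomposable maps nonzero to its translate via the unique simple. Transporting back yields exactly two completions.

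The main obstacle is setting up the reduction bijection and checking $|C|=1$. This rests on showing that $\prescript{\perp}{}{(\tau U)}\cap P^{\perp}\cap U^{\perp}$ is equivalent to $\mod C$, and that $\tau$-rigidity is preserved under this equivalence (Jasso's argument using $\tau$-perpendicular categories). If I wanted to avoid the reduction, I would instead follow the original \cite{AIR} route. Let $(U\oplus X,P)$ and $(U\oplus Y,P')$ be two completions with $X\not\cong Y$ indecomposable. Using the exchange sequence $X\to U_0\to Y\to 0$ given by a minimal left $\add U$-approximation, together with the lemma that $\hom(X,\tau Y)\neq 0$ or $\hom(Y,\tau X)\neq 0$, one shows that one of the two torsion classes equals $\fac U$ and the other is the Bongartz class. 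That pins down both completions, which proves uniqueness.
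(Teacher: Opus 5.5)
The paper gives no proof of this statement; it is quoted from \cite{AIR}. So the relevant comparison is with the argument there. Your proof is correct, but it follows Jasso's $\tau$-tilting reduction rather than the original route. The steps are these:
\begin{enumerate}
\item Completions of $(U,P)$ are the functorially finite torsion classes in $[\fac U,\prescript{\perp}{}{(\tau U)}\cap P^{\perp}]$.
\item These correspond bijectively to support $\tau$-tilting pairs of $C=\operatorname{End}_\Lambda(T_U)/\langle e_U\rangle$.
\item Since the Bongartz completion keeps projective part exactly $P$, you get $|C|=|T_U|-|U|=1$. So $C$ is local, and its only basic support $\tau$-tilting pairs are $(C,0)$ and $(0,C)$.
\end{enumerate}
Your argument for the local case (a non-projective indecomposable maps nonzero to its translate through the unique simple) is fine. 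A quicker version: every $C$-module lies in $\filt(S)$ for the unique simple $S$, so the only torsion classes are $0$ and everything.

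This is not circular. The reduction theorem is built from three ingredients: the bijection with functorially finite torsion classes, Bongartz completion, and the criterion that a $\tau$-rigid pair $(T,P)$ is support $\tau$-tilting iff $\fac T=\prescript{\perp}{}{(\tau T)}\cap P^{\perp}$. None of these uses the count of completions.

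The trade-off is this. Your route makes ``exactly two'' a rank-one triviality and places it inside a framework that counts completions of any $\tau$-rigid pair. However, all the substance is outsourced to a considerably heavier theorem that postdates \cite{AIR}. The original argument is more elementary and stays inside the module category of $\Lambda$. It works with the same two ends of the interval, the Bongartz completion and the completion attached to $\fac U$, and rules out anything strictly between them directly.

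Two smaller points.
\begin{itemize}
\item Your justification that the two ends differ (``not sincere relative to $P$'', ``by the counting in the bijection'') is muddled. The clean reason is the criterion above applied to $(U,P)$, which is not support $\tau$-tilting. The step is redundant in any case, since the reduction already yields exactly two completions.
\item Your closing sketch of the direct route is too compressed to count as a proof. It also tacitly assumes both completions add a module summand, whereas one of them may be $(U,P\oplus R)$ with $R$ indecomposable projective. For example, for $A_2$ the completions of $(S_1,0)$ are $(S_1\oplus P_1,0)$ and $(S_1,P_2)$, with $S_1$ the simple injective, $P_1$ the projective-injective, $P_2=S_2$ the simple projective. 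Since the sketch is offered only as an alternative, this does not affect your main argument.
\end{itemize}
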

	
	There is a bijection between support $\tau$-tilting modules and functorially finite torsion classes. Denote by $\ftors\Lambda$ the set of functorially finite torsion classes in $\mod\Lambda$ and by $\sstilt\Lambda$ the set of support $\tau$-tilting pairs.
	
	\begin{theorem}\cite{AIR}
		There is a bijection
		\begin{equation}
			\sstilt\Lambda \longleftrightarrow \ftors\Lambda
		\end{equation}
		given by $\sstilt\Lambda\ni T\mapsto\fac T\in\ftors\Lambda$ and $\ftors\Lambda\ni\cT\mapsto P(\cT)\in\sstilt\Lambda$, where $P(\cT)$ is the direct sum (up to isomorphism) of all indecomposable Ext‑projective objects in $\cT$. An object $X\in\cT$ is \emph{Ext‑projective} if $\ext_{\Lambda}^1(\cT,X)=0$.
	\end{theorem}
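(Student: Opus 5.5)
The plan is to reduce everything to classical results of Auslander--Smal{\o} and to tilting theory over an idempotent quotient of $\Lambda$. For a support $\tau$-tilting pair $(T,P)$ we show that $\fac T$ is a functorially finite torsion class. For a functorially finite torsion class $\cT$ we produce the support $\tau$-tilting pair whose module part is $P(\cT)$. Finally we check that the two assignments are mutually inverse.

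\emph{Step 1: $T\mapsto\fac T$ is well defined.} Let $T$ be $\tau$-rigid. The Auslander--Reiten formula $\ext^1_\Lambda(X,Y)\cong D\overline{\hom}_\Lambda(Y,\tau X)$ gives the Auslander--Smal{\o} criterion: $\hom_\Lambda(T,\tau X)=0$ if and only if $\ext^1_\Lambda(X,\fac T)=0$. From this we get that $\fac T$ is closed under extensions, so $\fac T$ is a torsion class, and that $T$ is Ext-projective in $\fac T$. The class $\fac T$ is functorially finite because it is generated by a single module: a right $\add T$-approximation of $X$ gives the $\fac T$-approximation by its image, and the dual property holds in the standard way.

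\emph{Step 2: $\cT\mapsto P(\cT)$ is well defined.} Let $\cT$ be functorially finite. By Auslander--Smal{\o}, $\cT$ has only finitely many indecomposable Ext-projectives up to isomorphism. Moreover $\fac P(\cT)=\cT$: take a minimal left $\cT$-approximation of $\Lambda$, say $\Lambda\to T_0$. Its cokernel-free part $T_0$ is Ext-projective and generates $\cT$. The module $P(\cT)$ is $\tau$-rigid by the criterion of Step~1, since $\ext^1(P(\cT),\fac P(\cT))=0$. Let $e$ be the idempotent with $e\Lambda$ the largest projective summand satisfying $\hom(e\Lambda,\cT)=0$. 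Set $P:=e\Lambda$, which gives a $\tau$-rigid pair $(P(\cT),P)$.

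\emph{Step 3: the counting condition $|M|+|P|=|\Lambda|$.} This is the main obstacle. We would pass to $\bar\Lambda=\Lambda/\langle e\rangle$. There $\cT$ is a sincere functorially finite torsion class and $M=P(\cT)$ is a $\tau$-rigid $\bar\Lambda$-module. Because it is sincere, $M$ is faithful, and $\tau_{\bar\Lambda}$-rigid faithful modules are partial tilting, since $\hom(M,\tau M)=0$ together with faithfulness gives $\operatorname{pd}M\le1$. Next, $M$ is the Ext-projective generator of $\fac M$. By Bongartz completion, any complement $X$ making $M\oplus X$ tilting satisfies $X\in\fac M$ and is Ext-projective there, so $X\in\add M$. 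Hence $M$ is already tilting over $\bar\Lambda$ and $|M|=|\bar\Lambda|=|\Lambda|-|P|$.

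The same argument, applied to a given support $\tau$-tilting pair $(T,P)$, shows that the Ext-projectives of $\fac T$ are exactly $\add T$. Indeed, $T$ is tilting over $\Lambda/\langle e\rangle$, so any indecomposable Ext-projective $X$ of $\fac T$ would satisfy $\ext^1(T\oplus X,T\oplus X)=0$, and maximality of tilting modules forces $X\in\add T$. Also, $P$ is recovered as the maximal projective $Q$ with $\hom(Q,\fac T)=0$, again by the count. This gives $P(\fac T)=T$, and together with $\fac P(\cT)=\cT$ from Step~2 the two maps are mutually inverse. The delicate point throughout is justifying the reduction to $\bar\Lambda$: one must check that $\tau_{\bar\Lambda}$-rigidity follows from $\tau_\Lambda$-rigidity, using $\hom_\Lambda(T,\tau_\Lambda X)=0\Rightarrow\hom_{\bar\Lambda}(T,\tau_{\bar\Lambda}X)=0$ for $\bar\Lambda$-modules.
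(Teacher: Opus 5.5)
Steps 1 and 2 are fine; they are the Auslander--Smal{\o} part of the argument in \cite{AIR}, which the paper only cites. The point you flag as delicate, descent of $\tau$-rigidity to a quotient algebra, is harmless: it follows from the Auslander--Smal{\o} criterion, since $\ext^1$ over a quotient embeds in $\ext^1_\Lambda$. The genuine gap is the sentence in Step 3 ``because it is sincere, $M$ is faithful''. It is false, and with it fall the claims that $M$ is partial tilting and that $T$ is a tilting $\Lambda/\langle e\rangle$-module. A support $\tau$-tilting pair $(T,e\Lambda)$ only makes $T$ a $\tau$-tilting $\Lambda/\langle e\rangle$-module, and $\tau$-tilting modules are tilting exactly when they are faithful.

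For a counterexample, let $\Lambda$ be the preprojective algebra of type $A_2$: arrows $a\colon 1\to 2$ and $b\colon 2\to 1$, relations $ab=ba=0$. Then $P_1$ is uniserial with top $S_1$ and socle $S_2$, $P_2$ is the same with the roles swapped, $\Lambda$ is self-injective, and $\tau S_1=S_2$. Take $M=P_1\oplus S_1$. Then $\hom(M,\tau M)=\hom(P_1\oplus S_1,S_2)=0$, and $\cT=\fac M=\add(P_1\oplus S_1)$ is a sincere torsion class with $P(\cT)=M$. Since $\hom(P_2,\cT)\neq 0$, you get $e=0$ and $\bar\Lambda=\Lambda$. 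Yet one of the arrows annihilates $M$, so $M$ is not faithful. Moreover $\mathrm{pd}\,S_1=\infty$, so $M$ is not even partial tilting, although $|M|=2=|\Lambda|$. So the count cannot come from classical tilting theory over $\Lambda/\langle e\rangle$. Your further claim that every complement $X$ of $M$ lies in $\fac M$ is also asserted without argument.

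The missing idea is to reduce modulo the annihilator rather than modulo $\langle e\rangle$.
\begin{enumerate}
\item Put $B=\bar\Lambda/\mathrm{ann}(M)$. Then $M$ is a faithful $\tau_B$-rigid module, hence partial tilting over $B$. Sincerity over $\bar\Lambda$ gives $\mathrm{ann}(M)\subseteq\mathrm{rad}\,\bar\Lambda$, so $|B|=|\bar\Lambda|$.
\item Take the left $\add M$-approximation $0\to B\to M'\to C\to 0$, which is injective because $M$ is faithful. One checks that $M\oplus C$ is tilting over $B$ and that $C\in\fac M$.
\item $C$ is Ext-projective in $\fac M$ inside $\mod\Lambda$. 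Indeed $M\oplus C$ is rigid of projective dimension at most one with $\fac(M\oplus C)=\fac M$. Also $\ext^1_B=\ext^1_\Lambda$ on $\fac M\subseteq\mod B$, because $\fac M$ is extension-closed in $\mod\Lambda$.
\item Hence $C\in\add M$, and $|M|=|B|=|\Lambda|-|P|$.
\end{enumerate}
For the converse direction and for recovering $P$, replace ``maximality of tilting modules'' by the bound $|N|\le|\Lambda/\langle e\rangle|$ for every $\tau$-rigid $\Lambda/\langle e\rangle$-module $N$. This follows from the same reduction: $N$ is partial tilting over $(\Lambda/\langle e\rangle)/\mathrm{ann}(N)$. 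Apply it to $N=T\oplus X$, which is $\tau$-rigid by the Auslander--Smal{\o} criterion because $\ext^1(T\oplus X,\fac T)=0$. This annihilator reduction is what makes the counting work in \cite{AIR}.
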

	
	This bijection allows mutations of torsion classes. The following theorem describes how mutations relate to inclusions of torsion classes.
	
	\begin{theorem}\cite[Theorem 3.1]{DIJ}\label{thm:mutation of torsion pair}
		Let $\Lambda$ be a finite dimensional algebra and $M$ a support $\tau$-tilting $\Lambda$-module. Then:
		\begin{enumerate}
			\item Let $\mathcal{T}$ be a torsion class in $\mod\Lambda$ such that $\fac M \supsetneq \mathcal{T}$. Then there exists $N\in\sstilt\Lambda$ with:
			\begin{itemize}
				\item $M$ and $N$ are mutations of each other,
				\item $\fac M \supsetneq \fac N \supset \mathcal{T}$.
			\end{itemize}
			\item Let $\mathcal{T}$ be a torsion class in $\mod\Lambda$ such that $\fac M \subsetneq \mathcal{T}$. Then there exists $L\in\sstilt\Lambda$ with:
			\begin{itemize}
				\item $M$ and $L$ are mutations of each other,
				\item $\fac M \subsetneq \fac L \subset \mathcal{T}$.
			\end{itemize}
		\end{enumerate}
	\end{theorem}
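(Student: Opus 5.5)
The plan is to reduce both parts to the lattice structure of $\tors\Lambda$ around a functorially finite torsion class. We use the bijection $\sstilt\Lambda\leftrightarrow\ftors\Lambda$ of \cite{AIR} together with the fact that mutation corresponds to the arrows of the Hasse quiver of $\ftors\Lambda$. We treat (1); statement (2) is dual, obtained by passing to the opposite algebra or exchanging the roles of torsion and torsion-free classes via $\cT\mapsto\cT^{\perp}$.

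Write $M=M'\oplus X$ with $X$ indecomposable. The first goal is to show that the left mutations $N=\mu_X(M)$ with $\fac N\subsetneq\fac M$ correspond bijectively to the indecomposable summands $X$ of $M$ with $X\notin\fac(M'\oplus \text{(nothing new)})$. Equivalently, they correspond to the torsion classes covered by $\fac M$ in $\tors\Lambda$, which are exactly $\fac M\cap{}^{\perp}B_X$ for the bricks $B_X$ labelling the downward arrows. For a torsion class $\cT\subsetneq\fac M$, we then pick an object $Y\in\fac M\setminus\cT$. We may take $Y$ with minimal length, or pass to its minimal semistable quotient relative to the torsion pair $(\cT,\cT^{\perp})$, so that $Y\in\fac M\cap\cT^{\perp}$ is nonzero.

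The key step is to find an indecomposable summand $X$ of $M$ whose associated brick label $B_X$ (the top of $X$ modulo $\rad_{\fac M}$, i.e. $X/\sum_{f:M'\to X}\im f$ when $X\notin\fac M'$) admits a nonzero map $B_X\to Y'$ for some $Y'\in\cT^{\perp}$. To find it, use $\hom(M,Y)\neq0$, since $Y\in\fac M$. Among the summands $X$ with $\hom(X,\cT^{\perp}\cap\fac M)\neq0$, choose one that is not generated by the others modulo $\cT$. Concretely, consider $t_{\cT}$ applied to $M$, or equivalently the torsion class $\fac M\cap{}^{\perp}(\cT^{\perp}\cap\fac M)$. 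With $N$ the mutation at $X$, we have $\fac N=\fac M\cap{}^{\perp}B_X$. We must show $\cT\subseteq{}^{\perp}B_X$, i.e. $\hom(\cT,B_X)=0$. I would derive this from the choice $B_X\in\cT^{\perp}$: take $B_X$ to be a brick quotient of $Y$ lying in $\cT^{\perp}$, using that $\cT^{\perp}$ is closed under submodules and that $Y$ can be replaced by a brick via Lemma~\ref{lem:finset stability data to brick}.

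Alternatively, and this is likely the cleaner route, I would use the characterization of \cite{DIRRT} that the arrows out of $\fac M$ downward are in bijection with the bricks $B$ in $\fac M\cap(\cT')^{\perp}$ for the covered $\cT'$. I would combine this with the fact that $\fac M$ is functorially finite, so the interval $[\cT,\fac M]$ is controlled by the finitely many downward covers of $\fac M$. The main obstacle is exactly this point: showing that some cover of $\fac M$ lies above $\cT$. Maximal chains in $[\cT,\fac M]$ need not have a top step in general. Functorial finiteness, via $\fac M\cap{}^{\perp}(\tau M')$ and the Ext-projective description, is what guarantees that the join of $\cT$ with all proper subclasses is still proper, hence contained in a cover $\fac N$.
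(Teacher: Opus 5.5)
The paper does not prove this statement; it only cites \cite{DIJ}. Judged on its own, your proposal has a genuine gap. The one thing (1) requires is never established: that some indecomposable summand $X$ of $M$ with $X\notin\fac(M/X)$ satisfies $\hom_\Lambda(\cT,B_X)=0$. Each version you offer either assumes this or rests on an invalid step.
\begin{itemize}
\item Your ``equivalently'', which identifies the lower covers of $\fac M$ in $\tors\Lambda$ with the $\fac\mu_X(M)$, is not supplied by \cite{AIR}. That paper only describes covers inside $\sstilt\Lambda$, i.e.\ among functorially finite torsion classes. Ruling out a non-functorially-finite lower cover above $\cT$ is exactly the theorem, so this step is circular.
\item ``Choose a summand not generated by the others modulo $\cT$'' presupposes that such a summand exists, which is the point to be proved.
\item ``Take $B_X$ to be a brick quotient of $Y$ lying in $\cT^{\perp}$'' is not a legitimate move. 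The label $B_X=X/\sum_{f\in\rad(M,X)}\im f$ is determined by $X$. (It must include the radical endomorphisms of $X$; your formula using only $M'$ is not a brick in general, e.g.\ for $K[x]/(x^2)$.) Moreover, $\cT^{\perp}$ is closed under submodules, not quotients.
\item Your criterion ``$B_X$ maps nonzero into $\cT^{\perp}$'' only says $B_X\notin\cT$, and you never connect this to $\hom(\cT,B_X)=0$.
\item The claim that ``the join of $\cT$ with all proper subclasses is still proper'' is false as stated: two distinct lower covers of $\fac M$ already join to $\fac M$. What is true is that a union of a chain in $[\cT,\fac M)$ stays proper, because $\fac M$ is generated by the single module $M$. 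Zorn then gives a lower cover of $\fac M$ in $\tors\Lambda$ containing $\cT$, but you still need it to be some $\fac\mu_X(M)$, which is the same gap.
\end{itemize}

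The missing idea is to use twice that $M$ is Ext-projective in $\fac M$.

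First, put $E=\operatorname{End}_\Lambda(M)$ and $J=\rad E$. Then $M/JM\cong\bigoplus_X B_X$, summed over the summands $X\notin\fac(M/X)$; for the remaining summands, $\rad(M,X)M=X$. Each layer $J^iM/J^{i+1}M$ is a quotient of $J^i\otimes_E M/JM$, hence of finitely many copies of $M/JM$. So $\fac M=\filt\fac\bigl(\bigoplus_X B_X\bigr)$, and $\cT\subsetneq\fac M$ forces $B_X\notin\cT$ for some such $X$.

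Second, $\rad(M,X)M\in\fac M$ and $\ext^1_\Lambda(M,\fac M)=0$, so every morphism $M\to B_X$ lifts to $M\to X$. The lift's components are radical, hence land in $\rad(M,X)M$, apart from a scalar multiple of $\mathrm{id}_X$. Therefore every nonzero morphism $M\to B_X$ is surjective, and no nonzero proper submodule of $B_X$ lies in $\fac M$.

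Apply this to the torsion submodule $t_{\cT}B_X\in\cT\subseteq\fac M$ for the pair $(\cT,\cT^{\perp})$. Since $B_X\notin\cT$, it gives $t_{\cT}B_X=0$, i.e.\ $\hom_\Lambda(\cT,B_X)=0$. This is exactly the upgrade from ``$B_X\notin\cT$'' that you were missing.

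Hence $\cT\subseteq\fac M\cap{}^{\perp}B_X=\fac\mu_X(M)$. The last equality holds because $\fac\mu_X(M)=\fac(M/X)\subseteq\fac M\cap{}^{\perp}B_X\subsetneq\fac M$, and the interval $[\fac(M/X),\fac M]$ in $\tors\Lambda$ has only two elements by $\tau$-tilting reduction. Part (2) then follows by the duality you describe.
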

	
	We now examine the structure of maximal green sequences and prove a property regarding infinite length sequences.
	
	\begin{proposition}\label{prop:finitely generated torsion class}
		Let $\cS=\{\cT_{\alpha}\}_{\alpha\in J}$ be a maximal green sequence in $\mod\Lambda$.
		\begin{enumerate}
			\item If $\cS$ has infinite length, then not all torsion classes $\cT_{\alpha}$ can be finitely generated.
			\item If $\cS$ has finite length, then every torsion class $\cT_{\alpha}$ in $\cS$ is functorially finite.
		\end{enumerate}
	\end{proposition}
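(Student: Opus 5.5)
The plan is to reduce everything to the mutation theory of support $\tau$-tilting modules, via Theorem~\ref{thm:mutation of torsion pair} and the bijection $\sstilt\Lambda\leftrightarrow\ftors\Lambda$. The interpretation used throughout: a torsion class $\cT$ is \emph{finitely generated} if $\cT=\fac M$ for some $M\in\mod\Lambda$. By Auslander--Smal\o, such a $\cT$ is functorially finite, so by \cite{AIR} it equals $\fac M'$ for a support $\tau$-tilting module $M'$. Thus in both parts, "finitely generated" and "functorially finite" may be used interchangeably. Two facts will be used repeatedly:
\begin{itemize}
\item if $M,N$ are mutations of each other with $\fac N\subsetneq\fac M$, then $\fac M$ covers $\fac N$ in $\tors\Lambda$ (this is implicit in \cite{AIR,DIJ});
\item a maximal chain $\cS$ contains every torsion class comparable with all its members.
\end{itemize}

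\emph{Key step (descent).} Let $\cT_\alpha=\fac M\in\cS$ with $\cT_\alpha\neq 0$, and put $\cT:=\bigvee\{\cT_\beta\in\cS\mid \cT_\beta\subsetneq\cT_\alpha\}$. This is the union, it lies in $\cS$ by completeness, and $\cT\subsetneq\cT_\alpha$. Apply Theorem~\ref{thm:mutation of torsion pair}(1) to get $N$, a mutation of $M$, with $\fac M\supsetneq\fac N\supseteq\cT$. The class $\fac N$ is comparable with every element of $\cS$: each element is either $\subseteq\cT$ or $\supseteq\cT_\alpha$. By maximality, $\fac N\in\cS$, and hence $\fac N=\cT$. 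So the immediate predecessor of a finitely generated member of $\cS$ exists and is again finitely generated. Dually, Theorem~\ref{thm:mutation of torsion pair}(2) gives the same conclusion for immediate successors.

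\emph{Part (2).} If $\cS$ is finite, start from $\mod\Lambda=\fac\Lambda$, which lies in $\cS$ by Proposition~\ref{prop:equivalence condition for mgs}(1). Iterating the key step downward shows that each element of the finite chain is $\fac$ of a support $\tau$-tilting module, hence functorially finite.

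\emph{Part (1).} Suppose, for contradiction, that every $\cT_\alpha$ is finitely generated while $\cS$ is infinite. Iterate the key step from $\cT_0:=\mod\Lambda$ to obtain $\cT_0\supsetneq\cT_1\supsetneq\cdots$ in $\cS$, each covering the next, with no members of $\cS$ strictly between consecutive terms. If this sequence reached $0$ after finitely many steps, every member of $\cS$ would be among the $\cT_i$, so $\cS$ would be finite. Hence the sequence is infinite. Let $\cT_\omega:=\bigcap_i\cT_i$; it lies in $\cS$ by completeness, and by hypothesis $\cT_\omega=\fac M_\omega$ with $M_\omega$ support $\tau$-tilting. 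Since $\cT_\omega\subsetneq\cT_1$, apply Theorem~\ref{thm:mutation of torsion pair}(2) with $\mathcal{T}=\cT_1$ to get $L$ with $\cT_\omega\subsetneq\fac L\subseteq\cT_1$ and $\fac L$ covering $\cT_\omega$.

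Next I check that $\fac L$ is comparable with all of $\cS$. Any member of $\cS$ either contains some $\cT_i$ with $i\ge 1$, in which case it contains $\fac L$, or is contained in every $\cT_i$, hence in $\cT_\omega\subseteq\fac L$. By maximality, $\fac L\in\cS$, and then $\fac L=\cT_j$ for some $j\ge1$. But $\cT_\omega\subsetneq\cT_{j+1}\subsetneq\cT_j$ contradicts the fact that $\fac L$ covers $\cT_\omega$. This argument also handles the case $\cT_\omega=0$.

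\emph{Expected obstacle.} The main difficulty is justifying the covering property of mutations and the "comparable implies in $\cS$" step in the infinite setting. The latter requires carefully using that $\cS$ is totally ordered and complete. In particular, in Part (1) one must check that every element of $\cS$ lies either below $\cT_\omega$ or above some $\cT_i$, with no elements lying between $\cT_{i+1}$ and $\cT_i$.
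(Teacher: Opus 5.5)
There is a genuine gap at the end of Part (1). Your descent step and your Part (2) are correct. They run the paper's construction top-down, and the strictness of the predecessor comes from $M\in\cT_\alpha$. The paper instead goes bottom-up and uses finiteness of the set of mutations.

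\textbf{The gap.} You apply Theorem~\ref{thm:mutation of torsion pair}(2) only with target $\cT_1$. So all you know is $\cT_\omega\subsetneq\fac L\subseteq\cT_1$. Your assertion that a member of $\cS$ containing some $\cT_i$ ($i\ge 1$) contains $\fac L$ is therefore unjustified already for $\cT_2$: it would need $\fac L\subseteq\cT_2$.

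It also cannot be justified as stated. If $\fac L\subseteq\cT_i$ held for all $i$, then $\fac L\subseteq\bigcap_i\cT_i=\cT_\omega$, which contradicts $\cT_\omega\subsetneq\fac L$. Hence $\fac L\in\cS$ is never validly obtained. A mutation chosen only below $\cT_1$ has no reason to be comparable with $\cT_2,\cT_3,\dots$.

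This is where the situation differs from your descent step. Below $\cT_\alpha$ there is a single target, namely the join of the members below it. The meet of the members above $\cT_\omega$ is $\cT_\omega$ itself, so there is no single target to aim at.

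\textbf{The missing idea.} You need a uniform choice of mutation, which is exactly what the paper's finite nested sets $Q_i$ provide.
\begin{enumerate}
\item For every $i\ge 1$, apply Theorem~\ref{thm:mutation of torsion pair}(2) to $M_\omega$ and $\cT_i$. This gives a mutation $L_i$ with $\cT_\omega\subsetneq\fac L_i\subseteq\cT_i$.
\item Since $M_\omega$ has only finitely many mutations, some $L$ equals $L_i$ for infinitely many $i$.
\item Because the $\cT_i$ decrease, $\fac L\subseteq\cT_i$ for all $i$. Hence $\fac L\subseteq\cT_\omega$, a contradiction.
\end{enumerate}
This needs neither the covering property nor maximality.

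Alternatively, you can prove your ``dual'' key step properly. Since $\cT_\omega$ is functorially finite, $\cT_\omega^{\perp}=\sub Y$ for some $Y$. The torsion-free classes of a chain form a chain whose join is the union. So the meet of the members of $\cS$ strictly above $\cT_\omega$ would be strictly larger than $\cT_\omega$. But that meet is $\bigcap_i\cT_i=\cT_\omega$.

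\textbf{The meaning of ``finitely generated''.} The paper's proof reads this as compact in $\tors\Lambda$, i.e.\ $\cT=\filt(\fac X)$. That is weaker than functorially finite. For the Kronecker algebra, the torsion class generated by a single regular brick $S_{[\lambda,\mu]}$ is compact but not functorially finite.

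Under that reading, your hypothesis only makes $\cT_\omega$ compact, not functorially finite. So Theorem~\ref{thm:mutation of torsion pair} cannot be applied to it. Even your repaired argument shows only that some member is not functorially finite.

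The paper gets the stronger conclusion by building the chain from $0$ upward, with the pigeonhole step at each stage, and taking the union $\bigvee_k\cT^k$. This union is a member of $\cS$, and it is not compact because it is the union of a strictly increasing chain. A top-down limit such as your $\cT_\omega$ is automatically non-co-compact, but nothing in your argument shows it is non-compact.
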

	
	\begin{proof}
		We first prove part (1). Set $\cT^0 := 0$ and $I^0 := \{i \in J \mid \cT^0 \subsetneq \cT_i\}$. For each $i \in I^0$, define
		\begin{equation}
			Q_i := \{\cT \mid \cT \text{ is a mutation of } \cT^0 \text{ and } \cT^0 \subsetneq \cT \subset \cT_i\}.
		\end{equation}
		By Theorem \ref{thm:mutation of torsion pair}, $Q_i$ is nonempty for each $i\in I^0$, and $Q_i\subset Q_{i'}$ whenever $i<i'$. Hence $\bigcap_{i\in I^0}Q_i$ is nonempty, because each $Q_i$ is finite and the decreasing chain stabilizes. Pick $\cT\in\bigcap_{i\in I^0}Q_i$; then $\cT^0\subsetneq\cT\subset\bigwedge\{\cT_i\mid i\in I^0\}$. Since $\cS$ is a maximal green sequence, $\cT^0$ is completely meet‑irreducible, so $\cT = \bigwedge\{\cT_i\mid i\in I^0\}$. Moreover, $\bigcap_{i\in I^0}Q_i$ contains exactly one torsion class.
		
		Let $\cT^1$ be the unique torsion class in $\bigcap_{i\in I^0}Q_i$; it is functorially finite. Replace $\cT^0$ by $\cT^1$ and repeat. By induction we obtain an increasing chain of functorially finite torsion classes $\{\cT^k\}_{k\in\mathbb{N}}$ contained in $\cS$, with $\cT^{k+1}$ covering $\cT^k$ in $\cS$ for each $k$. Thus $\{\cT^k\}$ is an ideal of $\cS$.
		
		Set $\cT:=\bigvee_{k\in\mathbb{N}}\cT^k$. Then $\cT$ is not completely join‑irreducible, and by Lemma \ref{lem:compact element} it is not compact in $\cS$, hence not compact in $\tors\Lambda$. Compact elements in $\tors\Lambda$ are exactly those of the form $\filt(\fac X)$ for some $X\in\mod\Lambda$ \cite[Proposition 3.2]{DIRRT}. Therefore $\cT$ is not finitely generated. Consequently, not all $\cT_{\alpha}$ can be finitely generated, which establishes (1).
		
		For part (2), assume $\cS$ has finite length. The construction above produces an increasing chain of functorially finite torsion classes. Because $\cS$ is finite, this chain must terminate after finitely many steps, reaching $\mod\Lambda$. Hence every torsion class $\cT_{\alpha}$ appearing in $\cS$ is functorially finite.
	\end{proof}
	
	\begin{example}
		Consider the Kronecker quiver
		\begin{equation}
			Q: \; 1 \rightrightarrows 2.
		\end{equation}
		Let $\Lambda = KQ$ be its path algebra over an algebraically closed field $K$. The category $\mod\Lambda$ of finite‑dimensional right $\Lambda$‑modules is an abelian length category. Its Auslander–Reiten quiver is well known and consists of three components: the preprojective component $\mathcal{P}$, the regular component $\mathcal{R}$ (a family of tubes of rank $1$), and the preinjective component $\mathcal{I}$. Denote by $P(i)$, $I(i)$ the indecomposable projective and injective modules on the vertex $i$ respectively. Then the preprojective component $\mathcal{P}$ has the shape
		\begin{equation}
			\begin{tikzcd}[
				column sep=1.12em,
				row sep=2em,
				cells={nodes={text width=2.5em, align=center, inner sep=1pt}}
				]
				& P_2 && P_4 && P_6 && {} \\
				P_1 && P_3 && P_5 && \cdots
				\arrow[shorten >=1pt, shorten <=1pt, from=1-2, to=2-3]
				\arrow[shorten >=1pt, shorten <=1pt, shift left=1.5, from=1-2, to=2-3]
				\arrow[shorten >=1pt, shorten <=1pt, from=1-4, to=2-5]
				\arrow[shorten >=1pt, shorten <=1pt, shift left=1.5, from=1-4, to=2-5]
				\arrow[shorten >=1pt, shorten <=1pt, from=1-6, to=2-7]
				\arrow[shorten >=1pt, shorten <=1pt, shift left=1.5, from=1-6, to=2-7]
				\arrow[shorten >=1pt, shorten <=1pt, from=2-1, to=1-2]
				\arrow[shorten >=1pt, shorten <=1pt, shift left=1.5, from=2-1, to=1-2]
				\arrow[shorten >=1pt, shorten <=1pt, from=2-3, to=1-4]
				\arrow[shorten >=1pt, shorten <=1pt, shift left=1.5, from=2-3, to=1-4]
				\arrow[shorten >=1pt, shorten <=1pt, from=2-5, to=1-6]
				\arrow[shorten >=1pt, shorten <=1pt, shift left=1.5, from=2-5, to=1-6]
			\end{tikzcd}
		\end{equation}
		where $P_{2k+1}=\tau^{-k}P(2)$ and $P_{2k+2}=\tau^{-k}P(1)$ for each $k\in\mathbb{Z}_{\ge 0}$. The preinjective component $\mathcal{I}$ has the shape
		\begin{equation}
			\begin{tikzcd}[
				column sep=1.12em,
				row sep=2em,
				cells={nodes={text width=2.5em, align=center, inner sep=1pt}}
				]
				\cdots && I_5 && I_3 && I_1 && {} \\
				& I_6 && I_4 && I_2
				\arrow[shorten >=1pt, shorten <=1pt, from=1-1, to=2-2]
				\arrow[shorten >=1pt, shorten <=1pt, shift left=1.5, from=1-1, to=2-2]
				\arrow[shorten >=1pt, shorten <=1pt, from=1-3, to=2-4]
				\arrow[shorten >=1pt, shorten <=1pt, shift left=1.5, from=1-3, to=2-4]
				\arrow[shorten >=1pt, shorten <=1pt, from=1-5, to=2-6]
				\arrow[shorten >=1pt, shorten <=1pt, shift left=1.5, from=1-5, to=2-6]
				\arrow[shorten >=1pt, shorten <=1pt, from=2-2, to=1-3]
				\arrow[shorten >=1pt, shorten <=1pt, shift left=1.5, from=2-2, to=1-3]
				\arrow[shorten >=1pt, shorten <=1pt, from=2-4, to=1-5]
				\arrow[shorten >=1pt, shorten <=1pt, shift left=1.5, from=2-4, to=1-5]
				\arrow[shorten >=1pt, shorten <=1pt, from=2-6, to=1-7]
				\arrow[shorten >=1pt, shorten <=1pt, shift left=1.5, from=2-6, to=1-7]
			\end{tikzcd}.
		\end{equation}
		where $I_{2k+1}=\tau^{k} I(1)$ and $I_{2k}=\tau^{k} I(2)$ for each $k\in \mathbb{Z}_{\ge 0}$ (with $k=0$ giving $I_1=I(1)$, $I_2=I(2)$). The regular part $\mathcal{R}$ consists of a family of pairwise orthogonal standard homogeneous tubes indexed by the projective line $\mathbb{P}_1(K)$. For each $[\lambda: \mu]\in \mathbb{P}_1(K)$, denote the corresponding tube by $\mathbb{T}_{[\lambda:\mu]}$. It contains a unique brick (up to isomorphism) and we denote it by $S_{[\lambda,\mu]}$, where
		\begin{equation}
			\begin{tikzcd}
				{S_{[\lambda,\mu]}=K} & K
				\arrow["\mu"', shift right=1, from=1-1, to=1-2]
				\arrow["\lambda", shift left=1, from=1-1, to=1-2]
			\end{tikzcd}
		\end{equation}
		
		(1) There is exactly one maximal green sequence of finite length, namely, $\cS=\{\cT_i\}_{i=0}^2$ where
		\begin{equation}
			\cT_0=\{0\}, \quad \cT_1=\add S_2,\quad \cT_2=\mod KQ.
		\end{equation}
		In this case, each torsion class is both completely join-irreducible and completely meet-irreducible in $\cS$. Its corresponding CBHO sequence is $(S(1),S(2))$. The finest stability data corresponding with $\cS$ is $(\Phi, \{\Pi_{\varphi}\}_{\varphi\in\Phi})$ where $\Phi=\{1,2\}$ and $\Pi_{1}=\add S_1$ and $\Pi_{2}=\add S_2$.
		
		(2) We now construct maximal green sequences of infinite length. Fix a total order $\preceq$ on the projective line $\mathbb{P}_1(K)$. Define the totally ordered set
		\begin{equation}
			\Phi = \{(0,n)\mid n\in\mathbb{Z}_{+}\}\;\sqcup\;\mathbb{P}_1(K)\;\sqcup\;\{(1,-n)\mid n\in\mathbb{Z}_{+}\},
		\end{equation}
		with the order
		\begin{equation}
			(0,1) < (0,2) < (0,3) < \cdots < [\lambda:\mu]\;(\text{ordered by }\preceq) < \cdots < (1,-3) < (1,-2) < (1,-1).
		\end{equation}
		For each $\varphi\in\Phi$, define a brick $B_\varphi$ by
		\begin{equation}
			B_\varphi = 
			\begin{cases}
				P_n, & \text{if } \varphi = (0,n),\; n\ge 1,\\[4pt]
				S_{[\lambda,\mu]}, & \text{if } \varphi = [\lambda:\mu]\in\mathbb{P}_1(K),\\[4pt]
				I_n, & \text{if } \varphi = (1,-n),\; n\ge 1.
			\end{cases}
		\end{equation}
		
		Then $\{B_\varphi\}_{\varphi\in\Phi}$ is a CBHO sequence in $\mod\Lambda$. By Theorem~\ref{thm:correspondence of three sets}, it corresponds to a finest stability data $(\Phi,\{\Pi_\varphi\}_{\varphi\in\Phi})$ with $\Pi_\varphi = \filt\{B_\varphi\}$, and hence to a maximal green sequence. Explicitly, for each filter $F$ of $\Phi$ (including $\emptyset$ and $\Phi$), the corresponding torsion class is 
		\begin{equation}
			\cT_F = \filt\{B_\varphi \mid \varphi\in F\}.
		\end{equation}
		Thus we obtain a maximal green sequence of infinite length for every total order $\preceq$ on $\mathbb{P}_1(K)$. 
		
		In this maximal green sequence $\{\cT_F\}$, a torsion class $\cT_F$ is completely join-irreducible if and only if $F$ is a principal filter of $\Phi$, i.e., $F = \uparrow \varphi_0$ for some $\varphi_0\in \Phi$. In such a case, $\uparrow \varphi_0$ covers exactly one filter, namely $\uparrow \varphi_0 \setminus \{\varphi_0\}$, and one has $\cT_{\uparrow \varphi_0} \cap \cF_{\uparrow \varphi_0 \setminus \{\varphi_0\}} = B_{\varphi_0}$.
	\end{example}

	\section{Connections with cluster theory}\label{sec:cluster}
	
	In this section, we consider cluster algebras of infinite rank. More precisely, we consider the cluster algebra associated with a so‑called strongly almost finite quiver following \cite{HL}, which is used to investigate the folding theory of sign‑skew‑symmetric cluster algebras. We will see that maximal green sequences of infinite length naturally arise in cluster algebras of infinite rank.
	
	\subsection{Maximal green sequences in cluster quiver pattern of infinite rank}
	We first recall some terminologies related to (infinite) quivers \cite{BLP}. Let $Q$ be an (infinite) quiver. We denote by $V(Q)$ the set of vertices of $Q$ and by $A(Q)$ the set of arrows of $Q$. For $i \in V(Q)$, denote by $i_{+}$ (respectively, $i_{-}$) the set of arrows starting at (respectively, ending at) $i$. The quiver $Q$ is called \textbf{locally finite} if $i_{+}$ and $i_{-}$ are finite for any $i \in V(Q)$. For $i, j \in V(Q)$, let $Q(i, j)$ denote the set of paths from $i$ to $j$ in $Q$. Then $Q$ is called \textbf{interval‑finite} if $Q(i, j)$ is finite for any $i, j \in V(Q)$, and it is called \textbf{strongly locally finite} if it is locally finite and interval‑finite. In this paper, we always assume that the vertex set $V(Q)$ is at most countable. In this study, we call $Q$ \textbf{strongly almost finite} if it is strongly locally finite and has no infinite paths.
	
	Sometimes the ``interval‑finite'' condition is not easy to check directly. Note that a strongly almost finite quiver $Q$ is always acyclic, as it is interval‑finite. (In particular, a strongly almost finite quiver is always a cluster quiver, i.e., it has no loops and no $2$-cycles.) The following lemma provides a convenient equivalent characterization.
	
	\begin{lemma}
		Suppose $Q$ is an acyclic locally finite quiver that does not admit infinite paths. Then $Q$ is strongly almost finite.
	\end{lemma}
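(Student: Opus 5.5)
The lemma follows once we show that $Q$ is interval-finite. Local finiteness is assumed, and "no infinite paths" is part of the hypothesis, so together these give strong almost finiteness. We argue by contradiction. Fix $i,j\in V(Q)$ and suppose $Q(i,j)$ is infinite. From these infinitely many paths we will build an infinite path in $Q$, contradicting the hypothesis. The construction is a K\H{o}nig's lemma argument on the tree of paths starting at $i$.

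We build vertices $v_0=i, v_1, v_2,\dots$ and arrows $a_k\colon v_{k-1}\to v_k$ inductively, keeping the invariant that infinitely many paths in $Q(i,j)$ begin with the prefix $a_1a_2\cdots a_k$.

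\begin{enumerate}
\item At stage $k$, the prefix $a_1\cdots a_k$ ends at $v_k$. If $v_k\neq j$, every path in $Q(i,j)$ with this prefix must continue along one of the arrows in $(v_k)_+$. This set is finite by local finiteness. By the pigeonhole principle, some arrow $a_{k+1}\in (v_k)_+$ extends infinitely many of these paths, which preserves the invariant.
\item If $v_k=j$, acyclicity helps. Any path from $i$ to $j$ with prefix $a_1\cdots a_k$ that continues past $j$ would have to return to $j$, giving an oriented cycle. So the only such path is the prefix itself, which contradicts the invariant that infinitely many paths have this prefix. Hence $v_k\neq j$ at every stage, and the construction never stops.
\end{enumerate}

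The result is an infinite path $a_1a_2a_3\cdots$ in $Q$, a contradiction. Therefore each $Q(i,j)$ is finite, $Q$ is interval-finite, and $Q$ is strongly locally finite with no infinite paths, hence strongly almost finite.

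The only delicate point is the trivial paths at $i=j$. In that case acyclicity shows $Q(i,i)$ consists of just the trivial path, so it is finite. Otherwise the argument is a routine compactness argument, and the main care needed is stating the invariant correctly so that the pigeonhole step applies at every stage.
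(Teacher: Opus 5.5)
Your argument is correct and is essentially the paper's proof. Both arguments pick, at each stage, one of the finitely many outgoing arrows along which infinitely many $i$--$j$ paths continue (the paper phrases this as an immediate successor $i_k$ with $|Q(i_k,j_0)|=\infty$), which produces an infinite path and hence a contradiction; as a side remark, your case split at $v_k=j$ is not strictly needed, since at most one path with a given prefix can stop at $v_k$ and the pigeonhole step works either way.
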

	
	\begin{proof}
		It suffices to show that $Q$ is interval‑finite. First, we claim that for any pair of vertices $(i,j)$ and a fixed $n \in \mathbb{Z}_+$, the set $Q(i,j)$ contains only finitely many paths of length $n$. For $n=1$, the claim holds because $Q$ is locally finite. Any path in $Q(i,j)$ of length $n+1$ must pass through an immediate successor $i'$ of $i$. Since $i$ has only finitely many immediate successors, the claim follows by induction on $n$.
		
		Now suppose, if possible, that there exist vertices $i_0, j_0$ such that $|Q(i_0, j_0)| = \infty$. Consequently, there is no upper bound on the lengths of paths in $Q(i_0, j_0)$. Because $|(i_0)_+| < \infty$, we can find an immediate successor $i_1 \in (i_0)_+$ such that $|Q(i_1, j_0)| = \infty$ and again there is no upper bound on the lengths of paths in $Q(i_1, j_0)$. Repeating this argument inductively yields an infinite path passing through $i_0, i_1, \dots$, which contradicts the hypothesis that $Q$ admits no infinite paths.
	\end{proof}
	
	Similar to classical cluster theory, an \textbf{ice quiver} is a pair $(Q,F)$, where $Q$ is a cluster quiver (possibly infinite) and $F\subseteq V(Q)$ is a subset of vertices called the \textbf{frozen vertices}, such that there are no arrows between them.
	
	\begin{remark}
		For an ice quiver $R$, we denote by $V(R)$ the set of all vertices, by $V_m(R)$ the set of mutable (non‑frozen) vertices, and by $V_f(R)$ the set of frozen vertices. Thus $V(R)=V_m(R)\sqcup V_f(R)$ and there are no arrows among $V_f(R)$. We also denote by $A(R)$ the set of all arrows in $R$.
	\end{remark}
	
	Assume that $Q$ is a strongly almost finite quiver. We fix a copy $\bar{V}(Q)=\{\bar{i}\mid i\in V(Q)\}$ and identify $V(Q)=\mathbb{Z}_+$, $\bar{V}(Q)=\bar{\mathbb{Z}}_+=\{\bar{i}\mid i\in\mathbb{Z}_+\}$. For a subset $S\subseteq V(Q)$, denote by $Q^S$ the full subquiver on $S$. For $s\in\mathbb{Z}_+$, set $[s]:=\{1,\dots,s\}$, $[\bar{s}]:=\{\bar{1},\dots,\bar{s}\}$, and their complements $[s]^c:=\{s+1,s+2,\dots\}$, $[\bar{s}]^c:=\{\overline{s+1},\overline{s+2},\dots\}$.
	
	\begin{definition}
		Suppose $Q$ is a strongly almost finite quiver. The \textbf{framed quiver} associated with $Q$ is the ice quiver $\hat{Q}$ defined as follows:
		\begin{align}
			V(\hat{Q}) &= V(Q)\sqcup \bar{V}(Q), \\
			A(\hat{Q}) &= A(Q) \sqcup \{ i \gets \bar{i} \mid i \in \mathbb{Z}_+ \}.
		\end{align}
		The vertices $\{\bar{i}: i\in\mathbb{Z}_+\}$ are frozen.  
		Dually, the \textbf{coframed quiver} associated with $Q$ is the ice quiver $\check{Q}$ defined as follows:
		\begin{align}
			V(\check{Q}) &= V(Q)\sqcup \bar{V}(Q), \\
			A(\check{Q}) &= A(Q) \sqcup \{ i \to \bar{i} \mid i \in \mathbb{Z}_+ \},
		\end{align}
		with the same set of frozen vertices.
	\end{definition}
	
	Now we define the mutation of ice quivers, which does not involve cluster variables.
	
	\begin{definition}[Mutation of an ice quiver]
		Let $R$ be an ice quiver with the vertex set $V(R)=V_m(R)\sqcup V_f(R)$. For a mutable vertex $k \in V_m(R)$, the \textbf{mutation} $\mu_k(R)$ is a new ice quiver obtained from $R$ by the following three steps:
		\begin{enumerate}
			\item For every path $a \to k \to b$ where $a, b\in V(R)$, add an arrow $a \to b$;
			\item Reverse each arrow incident to $k$;
			\item Remove any resulting $2$-cycles.
		\end{enumerate}
		The set of frozen vertices remains unchanged.
	\end{definition}
	
	\begin{remark}
		If $R$ is locally finite, then only finitely many paths $a \to k \to b$ exist; thus each mutation changes only finitely many arrows and $\mu_k(R)$ remains locally finite. However, even when $R$ is strongly almost finite, $\mu_k(R)$ does not need to be strongly almost finite, because it may no longer be acyclic.
	\end{remark}
	
	For classical cluster theory, we refer to \cite{FZ1}, \cite{FZ2} and \cite{HL-Chinese}. Now we forget the cluster variables and focus solely on the combinatorics of quiver mutations.
	
	\begin{definition}\label{def:cluster quiver pattern}
		Let $\mathbb{T}$ be the $|\mathbb{Z}_+|$‑regular tree whose edges are labeled by positive integers.  A \textbf{cluster quiver pattern} on the initial ice quiver $\hat{Q}$ is an assignment of an ice quiver $R_t$ to each vertex $t$ of $\mathbb{T}$ such that:
		\begin{enumerate}
			\item $R_{t_0} = \hat{Q}$ for a fixed root vertex $t_0$;
			\item For any edge $t \stackrel{k}{\longrightarrow} t'$ (labeled by $k \in \mathbb{Z}_+$), the quiver $R_{t'}$ is obtained from $R_t$ by applying the mutation $\mu_k$ as defined above.
		\end{enumerate}
	\end{definition}
	
	By definition, all quivers appearing in the pattern are mutation equivalent to $\hat{Q}$. Therefore, for any $t \in \mathbb{T}$, $R_t$ can be regarded as the initial ice quiver, giving rise to the same cluster quiver pattern. If we follow a (finite or infinite) walk in $\mathbb{T}$ starting from $t_0$, we obtain a sequence of mutations.
	
	From now on, suppose $Q$ is a strongly almost finite quiver. The following definitions classify mutable vertices according to the arrows they have with frozen vertices.
	
	\begin{definition}
		Let $R\in \mut(\hat{Q})$. A vertex $i\in V_m(R)$ is called \textbf{green} if
		\begin{equation}
			\{\bar{j}\in \bar{V}(Q)\mid \exists\  \bar{j} \leftarrow i \in A(R)\}=\emptyset;
		\end{equation}
		dually, it is called \textbf{red} if
		\begin{equation}
			\{\bar{j}\in \bar{V}(Q)\mid \exists\  \bar{j} \rightarrow i \in A(R)\}=\emptyset.
		\end{equation}
	\end{definition}
	
	For each mutable vertex $i\in V_m(R)$, we now define its \textbf{$\mathbf{c}$-vector} $c_i$ as a column vector (indexed by the frozen vertices) whose $j$‑th entry is the integer
	\begin{equation}\label{eq:c-vectors}
		c_{ij} \;=\; \#\{\bar{j} \to i \text{ in } A(R)\} \;-\; \#\{i \to \bar{j} \text{ in } A(R)\},
		\qquad \bar{j} \in V_f(R).
	\end{equation}
	Because $R$ is locally finite, both terms in the right hand side of \eqref{eq:c-vectors} are finite. Since there are no $2$-cycles, these two terms cannot both be non‑zero. 
	
	The \textbf{$\mathbf{c}$-matrix} of $R$ is defined as the matrix $(c_1,c_2\cdots )$ consisting of all $\mathbf{c}$-vectors (each regarded as a column).
	
	Consider the mutation of $R$ at direction $k$. The $\mathbf{c}$-matrix of $\mu_k(R)$ is $(c'_1,c'_2,\cdots)$, where the entries are given by
	\begin{equation}
		c'_{ij} = 
		\begin{cases}
			-c_{ik}, & j = k,\\[6pt]
			c_{ij} + \operatorname{sign}(c_{ik})\,\bigl[\,c_{ik}\,b_{kj}\bigr]_+, & j \neq k,
		\end{cases}
	\end{equation}
	with $b_{kj} = \#\{k\to j\} - \#\{j\to k\}$, $[x]_+ = \max(0,x)$, and $\operatorname{sign}(0)=0$.
	
	For the initial framed quiver $\hat{Q}$, by definition, the $\mathbf{c}$-vector of vertex $i$ is the column vector that has $+1$ in the $i$‑th coordinate and $0$ elsewhere. Thus the $\mathbf{c}$-matrix of $\hat{Q}$ is the infinite identity matrix $I_\infty$. Dually, the $\mathbf{c}$-matrix of $\check{Q}$ is $-I_\infty$.
	
	Note that $i$ is green exactly when all entries of its $\mathbf{c}$-vector are $\le 0$, and red exactly when all entries are $\ge 0$. Hence sign‑coherence of $\mathbf{c}$-vectors (all entries of a $\mathbf{c}$-vector are either all non‑positive or all non‑negative) is equivalent to the statement that every vertex in $V_m(R)$ is either green or red.
	
	For a finite quiver $Q$, it is a classical fact that in every quiver mutation‑equivalent to $\hat{Q}$, each mutable vertex is either green or red (i. e. the sign‑coherence of $\mathbf{c}$-vectors, see\cite{DWZ1}). Indeed, this fact can be easily extended to the case of strongly almost finite quiver, which is as follows:
	
	\begin{proposition}\label{prop:sign-coherence of strongly almost finite quiver}
		Suppose $Q$ is a strongly almost finite quiver and $R$ is an ice quiver mutation‑equivalent to $\hat{Q}$.  Then every vertex in $V_m(R)$ is either green or red.  Equivalently, each $\mathbf{c}$-vector of $R$ is sign‑coherent.  Moreover, for each $i\in V_m(R)$, only finitely many arrows connect it to frozen vertices; hence each $\mathbf{c}$-vector has only finitely many non‑zero entries.
	\end{proposition}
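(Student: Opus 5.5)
The plan is to reduce to the finite case via locality of mutation. Any $R$ mutation-equivalent to $\hat{Q}$ is obtained from $\hat{Q}$ by a finite sequence $\mu_{k_m}\cdots\mu_{k_1}$ (the cluster quiver pattern is indexed by the regular tree $\mathbb{T}$, and every vertex of $\mathbb{T}$ is at finite distance from $t_0$). Let $S_0=\{k_1,\dots,k_m\}$. I will choose a finite full subquiver on which the whole computation takes place. Set $S$ to be $S_0$ together with all vertices of $Q$ within distance $m$ of $S_0$ (in the underlying graph). Local finiteness of $Q$ makes $S$ finite. Then I consider the finite framed quiver $\widehat{Q^S}$, i.e. the full subquiver of $\hat{Q}$ on $S\sqcup\bar{S}$.

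The key step is a locality lemma, proved by induction on $r\le m$. Let $R^{(r)}=\mu_{k_r}\cdots\mu_{k_1}(\hat{Q})$ and let $S_r$ be the vertices of $Q$ within distance $m-r$ of $S_0$. The claim has three parts:
\begin{enumerate}
\item the full subquiver of $R^{(r)}$ on $S_r\sqcup\bar{S_r}$ coincides with the corresponding full subquiver of $\mu_{k_r}\cdots\mu_{k_1}(\widehat{Q^S})$;
\item arrows of $R^{(r)}$ touching a vertex outside $S\sqcup\bar{S}$ are exactly those of $\hat{Q}$;
\item no arrow of $R^{(r)}$ joins a vertex of $S_0$ to a frozen vertex $\bar{j}$ with $j\notin S$.
\end{enumerate}
Mutation at $k$ only changes arrows between neighbours of $k$. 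A neighbour set can grow by at most one graph step per mutation, so after $r$ steps all modified arrows stay within distance $r$ of $S_0$. Moreover, frozen $\bar{j}$ becomes adjacent to $k$ only via a path $\bar{j}\to j\to\cdots$, whose length is bounded. This bookkeeping is the main obstacle: the distance bound has to be set up carefully so that neighbourhoods of the mutated vertices at every step stay inside $S$. I would first try the crude estimate "each mutation enlarges the support of the modified arrows by one graph-distance step", using the initial distance on $Q$ plus the frozen edges $\bar{j}\to j$.

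With locality in hand, take a mutable vertex $i$ of $R$. If $i\notin S$, its arrows to frozen vertices are those of $\hat{Q}$: a single arrow $\bar{i}\to i$. So $i$ is green-free in the sense that its $\mathbf{c}$-vector is $e_i$, which is sign-coherent with one nonzero entry. If $i\in S$, its arrows to frozen vertices in $R$ agree with those in the finite quiver $\mu_{k_m}\cdots\mu_{k_1}(\widehat{Q^S})$, and $i$ has no frozen neighbours outside $\bar{S}$. The classical sign-coherence for the finite acyclic quiver $Q^S$ \cite{DWZ1} then gives that $c_i$ is sign-coherent, i.e. $i$ is green or red.

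Finiteness of the support follows simultaneously. In the second case every nonzero entry of $c_i$ is indexed by $\bar{S}$, which is finite, and in the first case there is only one entry. Each entry is finite by local finiteness. This also yields the equivalent formulation in terms of $\mathbf{c}$-vectors, by the observation preceding the proposition.
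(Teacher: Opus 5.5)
Your proposal is correct and takes the same route as the paper: since $R$ is reached by finitely many mutations, you localize them to the finite framed quiver $\widehat{Q^S}$ and invoke finite-rank sign-coherence, with vertices outside $S$ keeping their single arrow $\bar i\to i$. The bookkeeping you flag is easier than you fear. The new number of arrows between $a$ and $b$ depends only on the arrows among $a,b,k$, so mutation at $k$ commutes with restriction to any full subquiver containing $k$; this gives agreement on all of $S\sqcup\bar S$, which your final step needs for $i\in S\setminus S_0$ but your shrinking-$S_r$ version of (1) does not literally provide, and a short induction shows that $S=S_0\cup N_Q(S_0)$ already suffices for (2).
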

	
	\begin{proof}
		Observe that $\hat{Q}$ itself has the property: each mutable vertex $i$ has exactly one incoming arrow from the frozen vertex $\bar{i}$ and no other arrows to/from frozen vertices; hence it is green and its $\mathbf{c}$-vector is the function that is $+1$ at $j=i$ and $0$ elsewhere.  Because $R$ is obtained by a (possibly infinite) sequence of mutations, we argue by finite approximations.
		
		Since $Q$ is locally finite, any finite initial segment of mutations only affects a finite subquiver.  More precisely, for any finite set of mutable vertices $\{i_1,\dots,i_m\}$, there exists a finite subquiver $Q^{[M]}$ of $Q$ (on vertices $1,\dots,M$) large enough so that mutating along $\{i_1,\dots,i_m\}$ inside $\hat{Q}$ and then restricting to the first $M$ vertices yields the same result as first restricting to $\hat{Q}^{[M]}$ and then performing the mutations.  The finite‑rank theory \cite{DWZ1} tells us that in the finite quiver $\hat{Q}^{[M]}$, every mutable vertex in the mutated seed is either green or red (i.e., its $\mathbf{c}$-vector is sign‑coherent).  This property passes to the limit because the color (or the sign of the $\mathbf{c}$-vector entries) of a fixed vertex stabilizes after finitely many mutations.  The finiteness of arrows to frozen vertices follows from the local finiteness of the mutation process.
	\end{proof}
	
	Thanks to Proposition \ref{prop:sign-coherence of strongly almost finite quiver}, the following definition is well‑defined.
	
	\begin{definition}
		Suppose $Q$ is a strongly almost finite quiver.  A \textbf{green sequence} for $Q$ is a (possibly infinite) sequence $\mathbf{k}=(k_1,k_2,\dots)\subset V(Q)$ such that $k_1$ is green in $\hat{Q}$ and for any $k\ge 2$, the vertex $k_i$ is green in $\mu_{k_{i-1}}\circ\cdots\circ\mu_{k_1}(\hat{Q})$.
	\end{definition}
	
	In the language of the cluster quiver pattern introduced in Definition \ref{def:cluster quiver pattern}, the definition above coincides exactly with the usual notion of a green sequence in cluster algebras in the case that $Q$ is a finite quiver.
	
	As is well known, two quivers $Q$ and $Q'$ (finite or infinite) are \textbf{isomorphic} (as quivers) if there exists a bijection $\sigma: Q_0 \to Q_0'$ such that for all $i,j\in Q_0$,
	\begin{equation}
		\#\{i\to j \text{ in } Q\} = \#\{\sigma(i)\to \sigma(j) \text{ in } Q'\}.
	\end{equation}
	
	Now suppose $R$ and $R'$ are two ice quivers with $V_m(R)=V_m(R')$ and $V_f(R)=V_f(R')$. We call $R$ and $R'$ are \textbf{ice isomorphic} if there exists a quiver isomorphism $\sigma$ (as above) from $R$ to $R'$ such that $\sigma(v)=v$ for every frozen vertex $v\in V_f(R)$.
	
	For a finite cluster quiver $Q$, a sequence $\mathbf{k}=(k_1,\dots,k_l)$ is called a \textbf{maximal green sequence} if every vertex in $V_m(\mu_{\mathbf{k}}(\hat{Q}))$ is red, where $\mu_{\mathbf{k}}(\hat{Q})=\mu_{k_{l}}\circ\cdots\circ\mu_{k_1}(\hat{Q})$. Furthermore, suppose $R\in \mut(\hat{Q})$. If all vertices in $V_m(R)$ are green (respectively, red), then $R$ is ice isomorphic to $\hat{Q}$ (respectively, $R$ is ice isomorphic to $\check{Q}$) \cite[Proposition 2.10]{BDP}.
	
	In classical cluster theory, maximal green sequences are always of finite length. However, for the cluster algebra of infinite rank, equivalently, the cluster quiver pattern from a strongly almost finite quiver, one cannot expect a maximal green sequence to be finite, because a single mutation can turn at most finitely many green vertices to red. Motivated by this observation, we introduce the following notion of convergence for sequences of quivers.
	
	\begin{definition}\label{def:convergence of quiver}
		Let $\{Q_n\}_{n=1}^{+\infty}$ be a sequence of ice quivers and $Q_\infty$ an ice quiver, where each $Q_n$ and $Q_\infty$ are strongly almost finite and share the same unfrozen vertex set $\mathbb{Z}_+$ and the same frozen vertex set $\bar{\mathbb{Z}}_+$. We say that the sequence \textbf{converges} to $Q_\infty$, and write
		\begin{equation}
			\lim_{n\to +\infty} Q_n = Q_\infty,
		\end{equation}
		if for every positive integer $s$, there exists a positive integer $N$ such that for all $n\ge N$, the full subquiver $Q_n^{[s]\sqcup[\bar{s}]}$ equals $Q_\infty^{[s]\sqcup[\bar{s}]}$.
	\end{definition}
	
	\begin{proposition}
		Using the notation established above, if the limit $\lim\limits_{n\to\infty} Q_n$ exists, then it is unique.
	\end{proposition}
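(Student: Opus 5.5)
Suppose $Q_\infty$ and $Q'_\infty$ are both limits of the sequence $\{Q_n\}$, in the sense of Definition \ref{def:convergence of quiver}. The plan is to show that $Q_\infty$ and $Q'_\infty$ agree on every finite full subquiver and then conclude they are equal. Both are ice quivers on the same vertex set $\mathbb{Z}_+\sqcup\bar{\mathbb{Z}}_+$, with the same frozen part. An ice quiver on this fixed vertex set is determined by the arrow multiplicities $\#\{u\to v\}$ for all pairs of vertices $u,v$. So it suffices to show that these multiplicities coincide in $Q_\infty$ and $Q'_\infty$.

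Fix a positive integer $s$. By the definition of convergence applied to $Q_\infty$, there is $N$ such that $Q_n^{[s]\sqcup[\bar{s}]}=Q_\infty^{[s]\sqcup[\bar{s}]}$ for all $n\ge N$. Applied to $Q'_\infty$, there is $N'$ such that $Q_n^{[s]\sqcup[\bar{s}]}=Q_\infty'^{[s]\sqcup[\bar{s}]}$ for all $n\ge N'$. Taking $n=\max\{N,N'\}$ gives
\[
Q_\infty^{[s]\sqcup[\bar{s}]}=Q_n^{[s]\sqcup[\bar{s}]}=Q_\infty'^{[s]\sqcup[\bar{s}]},
\]
an equality of full subquivers on the identical vertex set $[s]\sqcup[\bar{s}]$.

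Now let $u,v$ be any two vertices. Choose $s$ large enough that both lie in $[s]\sqcup[\bar{s}]$; this is possible since each vertex is some $i$ or $\bar{i}$ with $i\in\mathbb{Z}_+$. A full subquiver retains all arrows between its vertices. Hence $\#\{u\to v\}$ in $Q_\infty$ equals $\#\{u\to v\}$ in $Q'_\infty$. Therefore $Q_\infty=Q'_\infty$, with the same frozen set $\bar{\mathbb{Z}}_+$ by hypothesis.

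There is no real obstacle here. The only point needing care is the meaning of ``equals'' in Definition \ref{def:convergence of quiver}. It should be read as literal equality on the fixed labelled vertex set, not merely ice isomorphism. With that reading, uniqueness holds on the nose.
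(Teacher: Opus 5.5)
Your proof is correct and is essentially the paper's argument: for any two vertices you choose $s$ large enough, compare both limits with $Q_n$ for $n\ge\max\{N,N'\}$ to identify the full subquivers on $[s]\sqcup[\bar{s}]$, and conclude that the arrow multiplicities in $Q_\infty$ and $Q'_\infty$ agree. You are also right that ``equals'' in the definition of convergence must be read as literal equality on the labelled vertex set, which is the reading the paper's argument relies on.
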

	
	\begin{proof}
		Suppose $\lim\limits_{n\to\infty} Q_n = Q_\infty$ and $\lim\limits_{n\to\infty} Q_n = Q_\infty'$.  For any vertices $i,j$ (frozen or unfrozen), choose $s$ sufficiently large so that $Q_n^{[s]\sqcup[\bar{s}]}$ contains both $i$ and $j$. By convergence, there exists $N$ such that for all $n\ge N$, the full subquivers $Q_n^{[s]\sqcup[\bar{s}]}$, $Q_\infty^{[s]\sqcup[\bar{s}]}$ and $Q_\infty'^{[s]\sqcup[\bar{s}]}$ are equal. In particular, the number of arrows $i\to j$ in $Q_\infty$ equals that in $Q_\infty'$. Hence $Q_\infty = Q_\infty'$.
	\end{proof}
	
	Now we can define maximal green sequences for cluster algebras of infinite rank.
	
	\begin{definition}\label{def:mgs-inf-rank}
		Suppose $Q$ is a strongly almost finite quiver.  Let $\mathbf{k}=(k_1,k_2,\cdots)$ be an infinite sequence of unfrozen vertices and set $Q_n = \mu_{k_n}\circ\cdots\circ\mu_{k_1}(\hat{Q})$ for each $n\ge1$.  We call that $\mathbf{k}$ is a \textbf{maximal green sequence} for $Q$ if the following conditions are satisfied:
		
		(1) The limit $\lim\limits_{n\to\infty} Q_n$ exists (in the sense of Definition~\ref{def:convergence of quiver});
		
		(2) The limiting ice quiver, denoted by $Q_\infty$, is ice isomorphic to the coframed quiver $\check{Q}$. That is to say, there exists an isomorphism $\sigma:Q_{\infty}\to \check{Q}$ such that for each frozen vertex $\bar{i}\in \bar{\bZ}_+$, $\sigma(\bar{i})=\bar{i}$.
	\end{definition}
	
	\begin{remark}
		For finite cluster algebras, any maximal green sequence from $\hat{Q}$ to $\check{Q}$ can be reversed to obtain a sequence from $\check{Q}$ to $\hat{Q}$.  In infinite rank, reversal of an infinite sequence is not directly possible.  One could define a \textbf{maximal green co‑sequence} as an infinite sequence $\mathbf{k}'$ such that $\lim\limits_{n\to\infty} \mu_{k'_n}\circ\cdots\circ\mu_{k'_1}(\check{Q}) \cong \hat{Q}$.  However, this is essentially the same as a maximal green sequence for the opposite quiver $Q^{\mathrm{op}}$, so we restrict attention to maximal green sequences as defined above.
	\end{remark}
	
	In what follows, we will construct a class of maximal green sequence of a strongly almost finite quiver.
	
	Suppose $Q$ is a quiver and $Q_1$, $Q_2$ are two full subquivers of $Q$. We say that $Q$ is a \textbf{triangular extension} of $Q_1$ by $Q_2$ if the set of vertices of $Q$ is a disjoint union of the sets of $Q_1$ and $Q_2$ and there are no arrows from $Q_1$ to $Q_2$. Cao and Li have proved the following theorem for the existence of maximal green sequences of a finite quiver (Cao and Li actually proved the case of skew‑symmetrizable cluster algebras; see \cite{CL}. Here we only discuss quivers, i.e., the skew‑symmetric case.)
	
	\begin{theorem}\label{thm:triangular extension and mgs}\cite[Theorem 4.5]{CL}
		Suppose $Q$ is a finite quiver and is a triangular extension of $Q_2$ by $Q_1$. Let $\mathbf{k}_1:=(k_{11},\cdots ,k_{1 t_1})$ and $\mathbf{k}_2:=(k_{21},\cdots ,k_{2 t_2})$ be mutation sequences of $Q_1$ and $Q_2$ respectively. Then $\mathbf{k}:=(k_{11},\cdots ,k_{1 t_1}; k_{21},\cdots ,k_{2 t_2})$ is a maximal green sequence of $Q$ if and only if $\mathbf{k}_1$ and $\mathbf{k}_2$ are maximal green sequences of $Q_1$ and $Q_2$ respectively.
	\end{theorem}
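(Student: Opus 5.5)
This statement is the Cao--Li theorem, quoted from \cite[Theorem 4.5]{CL}, and it is used as an input rather than proved in this paper. For completeness, here is the route I would take through the framed-quiver combinatorics.

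The key structural fact comes from the triangular shape. Since there are no arrows from $Q_1$ to $Q_2$, every arrow between the two parts goes from $Q_2$ to $Q_1$. Mutating $\hat{Q}$ only at vertices of $Q_1$ therefore behaves well on the $Q_2$ side, and I claim the following invariant holds along any sequence of mutations at vertices of $Q_1$:
\begin{enumerate}
\item the full subquiver on $V(Q_1)\sqcup\bar V(Q_1)$ evolves exactly as in $\mu_{\mathbf{k}_1}(\hat{Q}_1)$;
\item the full subquiver on $V(Q_2)\sqcup\bar V(Q_2)$ is unchanged;
\item there are no arrows between $V(Q_1)$ and $\bar V(Q_2)$;
\item the vertices of $Q_2$ stay green.
\end{enumerate}
I would prove this invariant by induction on the length of the sequence, checking the three steps of mutation at some $k\in V(Q_1)$.

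The point to check in that induction is that no path $a\to k\to b$ with $a\in V(Q_2)$ ever produces an arrow to a frozen vertex $\bar j$ with $j\in Q_2$. It also requires that arrows from $Q_1$ into $\bar V(Q_1)$ created by earlier mutations do not, combined with the $Q_2\to Q_1$ arrows, create new arrows from $Q_2$ into $\bar V(Q_1)$ that point the wrong way. This sign bookkeeping is the step I expect to be the main obstacle. I would handle it with the $\mathbf{c}$-vector mutation formula displayed before the definition of green sequences. That formula shows that the $\mathbf{c}$-vectors of $Q_2$-vertices change only by adding $[c_{ik}b_{kj}]_+$-type terms when $k$ is red, and that these terms have support in $\bar V(Q_1)$. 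Sign-coherence then forces the $Q_2$-vertices to remain green.

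After $\mathbf{k}_1$ is finished, every vertex of $Q_1$ is red, and the $Q_1$ part is ice isomorphic to $\check{Q}_1$ by \cite[Proposition 2.10]{BDP}. The $Q_2$ part is a copy of $\hat{Q}_2$, possibly carrying extra arrows that involve $Q_1$ or $\bar V(Q_1)$. I would then run the symmetric argument for mutations at vertices of $Q_2$ with the roles swapped, using the red/green dual invariant, so that $\mathbf{k}_2$ acts on the $Q_2$ part exactly as on $\hat{Q}_2$.

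This gives the ``if'' direction: the $\mathbf{k}_1$ mutations are green because greenness is read off the isolated $Q_1$ block, the $\mathbf{k}_2$ mutations are green likewise, and all vertices end up red. For the converse, I would use the same invariant. Greenness of each step of $\mathbf{k}$ restricts to greenness in the corresponding blocks. Final redness of all vertices of $Q_1$ forces $\mathbf{k}_1$ to be maximal, because the $Q_1$ block is untouched by $\mathbf{k}_2$ up to arrows to $\bar V(Q_1)$ whose signs are controlled by sign-coherence. Final redness of all vertices of $Q_2$ forces $\mathbf{k}_2$ to be maximal.
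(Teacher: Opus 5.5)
Your two-phase architecture matches how the result is used in the paper, but the step you flag as the main obstacle is not handled, and the tool you propose for it cannot handle it.

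\textbf{Invariant (2) is not proved.} Given (3), invariant (2) is equivalent to the following: at no stage of the $Q_1$-mutations is there a path $j_1\to k\to j_2$ with $k\in V(Q_1)$ and $j_1,j_2\in V(Q_2)$. Mutating at such a $k$ would change the arrows inside $Q_2$. Your induction never addresses this.
\begin{enumerate}
\item The $\mathbf{c}$-vector formula for a column $j\in V(Q_2)$ only records arrows between $j$ and frozen vertices, so it is blind to arrows inside $Q_2$.
\item Your claim that the $Q_2$-columns change ``only when $k$ is red'' presupposes that a green vertex of $Q_1$ never has an arrow into $Q_2$. That holds in $\hat{Q}$, but after some mutations inside $Q_1$ it is exactly what has to be proved.
\item Sign-coherence of $c_j$, whose $\bar{j}$-entry stays $1$, only gives that $j$ stays green. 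It still allows arrows $\bar V(Q_1)\to j$, which you explicitly concede.
\end{enumerate}

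\textbf{Consequence for phase 2 and the converse.} With such arrows, phase 2 breaks. Your invariants do not exclude a configuration with $c_k=-e_{\bar 1}$ for $k\in V(Q_1)$, $c_j=e_{\bar j}+e_{\bar 1}$ for $j\in V(Q_2)$, and one arrow $j\to k$. Mutating at the green vertex $j$ then gives $c_k'=e_{\bar j}$, so a red vertex of $Q_1$ turns green. Hence neither of the following follows:
\begin{enumerate}
\item in the ``if'' direction, that $Q_1$ stays red during $\mathbf{k}_2$;
\item in the converse, that a vertex of $Q_1$ left green by $\mathbf{k}_1$ is still not red at the end.
\end{enumerate}
Your ``symmetric argument'' also lacks its base case, namely that $V(Q_2)$ and $\bar V(Q_1)$ are not joined after $\mathbf{k}_1$.

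\textbf{The missing lemma.} What you need is exactly the paper's Lemma~\ref{lem:equivalent condition on sign-coherence}, the uniform column sign-coherence of \cite{CL}. The paper does not prove the present statement; it cites \cite{CL}, and this lemma is the key input there. The argument runs as follows.
\begin{enumerate}
\item During mutations inside $Q_1$, treat $V(Q_2)$ as additional frozen vertices. Their rows $D_0$ are nonnegative, because the block arrows initially point $Q_2\to Q_1$, the same direction as $\bar{i}\to i$.
\item Whenever column $k$ of $D_t$ has the sign of $c_k$, the mutation rule for these rows is the same linear map as for the $\mathbf{c}$-matrix $C^{Q_1}_t$ of $Q_1$. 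So by induction $D_t=D_0C^{Q_1}_t$.
\item $D_0\ge 0$ together with sign-coherence of $C^{Q_1}_t$ reproduces the sign condition at the next step.
\item Consequently a green vertex of $Q_1$ only receives arrows from $Q_2$, and a red one only sends them. This rules out all paths $j_1\to k\to j_2$, $\bar{i}\to k\to j$ and $j\to k\to\bar{i}$.
\end{enumerate}

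\textbf{How this repairs your outline.} The lemma gives (2). It also shows that $V(Q_2)$ and $\bar V(Q_1)$ are never joined. Hence:
\begin{enumerate}
\item the $\mathbf{c}$-vectors of $Q_2$-vertices stay supported in $\bar V(Q_2)$ throughout $\mathbf{k}_2$;
\item the $\bar V(Q_1)$-parts of the $\mathbf{c}$-vectors of $Q_1$-vertices are frozen during $\mathbf{k}_2$.
\end{enumerate}
Both directions then go through as you sketch. After a maximal $\mathbf{k}_1$, all block arrows point $Q_1\to Q_2$. That is again a nonnegative row block for the $Q_2$-mutations, so the same lemma applies verbatim in phase 2.
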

	
	We will generalize this theorem to the strongly almost finite quivers.
	
	\begin{definition}
		Suppose $Q$ is a strongly almost finite quiver and $\{Q_p\}_{p=1}^\infty$ is a sequence of finite quiver. We say that $Q$ is a \textbf{triangular extension} of $\{Q_p\}_{p=1}^{\infty}$ if the set of vertices of $Q$ is a disjoint union of the set of each $Q_p$, each $Q_p$ is a full subquiver of $Q$ and there are no arrows from vertices of $Q_q$ to vertices of $Q_p$ whenever $q<p$.
	\end{definition}
	
	To prove the main theorem of this subsection, we need the following lemma.
	
	\begin{lemma}\label{lem:equivalent condition on sign-coherence}
		Let $Q$ be a strongly almost finite quiver and $Q_1$ , $Q_2$ are two full subquivers of $Q$ where $Q_1$ is finite. Suppose $Q$ is a triangular extension of $Q_2$ by $Q_1$. Then after any sequence of mutations within $Q_1$, there will never appear a path of the form $j_1 \to i \to j_2$ with $i \in V(Q_1)$ and $j_1, j_2 \in V(Q_2)$. Moreover, this property is equivalent to that after any sequence of mutations within $Q_1$, the full subquiver $Q_2$ remains invariant.
	\end{lemma}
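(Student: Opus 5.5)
We argue by induction on the length of the mutation sequence within $Q_1$. Here "triangular extension of $Q_2$ by $Q_1$" means, following Theorem \ref{thm:triangular extension and mgs}, that there are no arrows from $Q_1$ to $Q_2$ (or the reverse, depending on convention). We prove the statement for the convention that in $Q$ all arrows between $Q_1$ and $Q_2$ point in one fixed direction, say from $Q_2$ to $Q_1$; the other case is symmetric. The invariant to carry through the induction is:

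(I) after any sequence of mutations at vertices of $Q_1$, there is no vertex $i\in V(Q_1)$ that has both an incoming arrow from $V(Q_2)$ and an outgoing arrow to $V(Q_2)$.

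This is exactly the statement that no path $j_1\to i\to j_2$ with $i\in V(Q_1)$ and $j_1,j_2\in V(Q_2)$ exists. In the initial quiver $Q$, every arrow between $Q_1$ and $Q_2$ has the same direction, so (I) holds trivially. Frozen vertices play no role: only arrows among mutable vertices are considered, and the framing arrows $\bar i\to i$ never create paths between vertices of $Q_2$ through $Q_1$.

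For the inductive step, suppose (I) holds for $R$ and we mutate at $k\in V(Q_1)$. Arrows between two vertices of $Q_2$ can change only through step (1) of mutation, which requires a path $j_1\to k\to j_2$ with $j_1,j_2\in V(Q_2)$. Such a path is excluded by (I). Hence the full subquiver on $V(Q_2)$ is unchanged, which already gives the "moreover" direction: (I) implies that $Q_2$ stays invariant.

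The substantive point is that (I) itself is preserved, and this is the step I expect to be the main obstacle. The cleanest way to establish it is a stronger invariant:

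(I$'$) for each vertex $j\in V(Q_2)$, the net numbers of arrows between $j$ and the vertices of $Q_1$ all have one fixed sign, namely the sign they had in $Q$.

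One approach is to track the rows $b_{ji}$ for $j\in V(Q_2)$ and $i\in V(Q_1)$. Since every relevant vertex of $Q_2$ behaves like a frozen vertex with respect to mutations inside $Q_1$, these rows transform exactly like $\mathbf{c}$-vectors (rows indexed by frozen vertices). Sign-coherence then says that for a fixed $i\in V(Q_1)$, the column entries $(b_{ji})_{j\in V(Q_2)}$ all have one sign. This follows from the sign-coherence result of \cite{DWZ1}, applied to the finite ice quiver obtained by freezing the finitely many vertices of $Q_2$ adjacent to $Q_1$ (finitely many because $Q$ is locally finite and $Q_1$ is finite). It needs the principal-type initial condition, in which all of these vertices point in one direction into $Q_1$; this is a "generalized framing", and sign-coherence in that setting is the known consequence of the Derksen–Weyman–Zelevinsky theorem for skew-symmetric matrices with coefficient rows of one sign.

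Given this column sign-coherence, vertex $i$ cannot have both an incoming arrow from $Q_2$ and an outgoing arrow to $Q_2$, which is (I). Conversely, if some mutation within $Q_1$ changed the subquiver on $Q_2$, then at that step a path $j_1\to k\to j_2$ must have existed. So invariance of $Q_2$ under all mutation sequences inside $Q_1$ is equivalent to (I) holding after all such sequences, which gives the claimed equivalence.
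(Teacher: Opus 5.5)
Your route is the one the paper takes: it defers to \cite[Corollary 3.3, Proposition 3.7]{CL} after reducing to a finite piece. You treat the finitely many vertices of $Q_2$ adjacent to $Q_1$ as frozen, so the $V(Q_2)\times V(Q_1)$ block mutates as a coefficient block whose initial entries all have one sign. Column sign-coherence of that block then rules out paths $j_1\to i\to j_2$, and this forces the arrows inside $Q_2$ to stay fixed. That core argument is sound. However, the write-up contains one false claim and leaves one direction of the equivalence unproved.

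First, your ``stronger invariant'' (I$'$) is false. It is a \emph{row} condition (fixed $j\in V(Q_2)$, varying $i\in V(Q_1)$), and rows of the coefficient block need not be sign-coherent. Take $Q\colon 3\to 1\to 2$ with $Q_1=\{1,2\}$ and $Q_2=\{3\}$. Mutating at $1$ gives $1\to 3$ and $3\to 2$, so vertex $3$ has arrows of both orientations to $Q_1$. What your next paragraph actually uses is the \emph{column} statement for fixed $i$, which is just (I) itself, so (I$'$) should simply be deleted.

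Since the column statement is the entire content of the lemma, it is worth proving rather than calling it a known consequence. Let $M\ge 0$ be the initial block and $C_s$ the $\mathbf{c}$-matrix of $Q_1$ after $s$ mutations. By induction the block after $s$ mutations equals $MC_s$:
\begin{itemize}
\item the $k$-th column of $MC_s$ has the same sign as the $k$-th column of $C_s$, because $M\ge 0$;
\item on matrices whose $k$-th column has a fixed sign $\epsilon$, coefficient mutation at $k$ is right multiplication by a single matrix determined by $\epsilon$ and the current exchange matrix of $Q_1$.
\end{itemize}
Hence every column $Mc_i$ is sign-coherent. You should also note that a vertex of $Q_2$ not adjacent to $Q_1$ never becomes adjacent, since $\mu_k$ only creates arrows between neighbours of $k$. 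This is what justifies freezing only finitely many vertices.

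Second, your ``Conversely'' sentence only restates the direction (I) $\Rightarrow$ invariance. For invariance $\Rightarrow$ (I) you need the following. If some sequence of mutations inside $Q_1$ produces a path $j_1\to i\to j_2$, then one further mutation at $i$ changes the number of arrows $j_1\to j_2$ by $\#\{j_1\to i\}\cdot\#\{i\to j_2\}\neq 0$. So $Q_2$ is not invariant.
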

	
	The proof of this lemma is similar to that of \cite[Corollary 3.3, Proposition 3.7]{CL}, since $Q$ is strongly almost finite and a finite sequence of mutations allows us to restrict to a finite subquiver of $Q$. With Lemma~\ref{lem:equivalent condition on sign-coherence} at hand, we can now prove the following generalization of Theorem~\ref{thm:triangular extension and mgs} to strongly almost finite quivers.
	
	We make the following convention: for two finite mutation sequences $\mathbf{k}=(k_1,\dots,k_m)$ and $\mathbf{k}'=(k'_1,\dots,k'_n)$, define $\mathbf{k}\ast\mathbf{k}'=(k_1,\dots,k_m,k'_1,\dots,k'_n)$.
	
	\begin{theorem}\label{thm:maximal green sequence and triangular extension}
		Suppose $Q$ is a strongly almost finite quiver and is a triangular extension of the sequence of finite quivers $\{Q_p\}_{p=1}^\infty$. For each $p\ge 1$, let $t_p$ be a positive integer and let 
		\begin{equation}
			\mathbf{k}_p = (k_{p,1}, k_{p,2}, \dots, k_{p,t_p})
		\end{equation}
		be a maximal green sequence of $Q_p$. Then
		\begin{equation}
			\mathbf{k} = \mathbf{k}_1 \ast \mathbf{k}_2 \ast \cdots \ast \mathbf{k}_p\ast\ \cdots.
		\end{equation}
		is a maximal green sequence of $Q$.
	\end{theorem}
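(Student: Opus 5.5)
The plan is to track, block by block, what the partial mutations $\mu_{\mathbf{k}_p}\circ\cdots\circ\mu_{\mathbf{k}_1}(\hat{Q})$ look like. We then read off convergence and the shape of the limit directly from Definition~\ref{def:mgs-inf-rank}.

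Write $P_p:=V(Q_1)\sqcup\cdots\sqcup V(Q_p)$, a finite set. Let $R_p:=\mu_{\mathbf{k}_p}\circ\cdots\circ\mu_{\mathbf{k}_1}(\hat{Q})$, which is the quiver $Q_n$ with $n=t_1+\cdots+t_p$. The key claim, proved by induction on $p$, is a description of $R_p$ in three parts.
\begin{itemize}
\item The full ice subquiver of $R_p$ on $P_p\sqcup\overline{P_p}$ is the coframed quiver of $Q^{P_p}$, up to an ice isomorphism fixing frozen vertices.
\item The full subquiver on $V(Q)\setminus P_p$ together with its frozen copies is unchanged from $\hat{Q}$.
\item Every arrow between $P_p$ and $V(Q)\setminus P_p$ agrees with the arrows of $Q$, up to the vertex permutation $\sigma_p$. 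There are no arrows between $V(Q)\setminus P_p$ and $\overline{P_p}$, and none between $P_p$ and frozen vertices outside $\overline{P_p}$.
\end{itemize}

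For the inductive step, write $Q^{P_p}$ as a triangular extension by $Q_p$. Since everything is finite here, we apply Theorem~\ref{thm:triangular extension and mgs} to the finite quiver $Q^{P_p}$. Its maximal green sequence is $\mathbf{k}_1\ast\cdots\ast\mathbf{k}_p$, built inductively from the pairs $(Q^{P_{p-1}},Q_p)$. This gives the first part on $P_p\sqcup\overline{P_p}$, including the coframed shape.

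For the interaction with the rest of $Q$, we use Lemma~\ref{lem:equivalent condition on sign-coherence}. Take $Q^{P_p}$ as the finite part and $Q^{V(Q)\setminus P_p}$ as the other part; the triangular condition says there are no arrows from later blocks to earlier ones. The lemma then shows that mutating inside $P_p$ leaves the full subquiver on the complement (with its framing) invariant.

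The arrows between $P_p$ and the complement need more care. A standard trick applies: a frozen-like behaviour of the arrows from $P_p$ to later vertices mirrors the framing arrows, since all arrows go from $P_p$ outward. Concretely, I would treat the complement vertices adjacent to $P_p$ as extra frozen vertices. Then the arrows $P_p\to j$ behave like extended $c$-vector data, and after a maximal green sequence on $P_p$ they are "reflected" exactly as the frame is. This is the content of \cite[Prop.~3.7]{CL}, applied to a finite restriction $Q^{[M]}$ as in the proof of Proposition~\ref{prop:sign-coherence of strongly almost finite quiver}.

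Convergence then follows because each $\mathbf{k}_{p'}$ with $p'>p$ mutates only inside $V(Q_{p'})$. Here we use the statement that such mutations never create paths $j_1\to i\to j_2$ through the new block with $j_1,j_2$ in earlier blocks. That statement is Lemma~\ref{lem:equivalent condition on sign-coherence} applied with the roles arranged so that $P_{p}$ is invariant. The only subtlety is the direction of arrows relative to frozen vertices $\overline{P_p}$, which must also be included in the invariant part.

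Given $s$, choose $p$ with $[s]\subseteq P_p$. Then for all $n$ beyond the end of block $p$, the full subquiver on $[s]\sqcup[\bar s]$ is frozen in time, so $\lim Q_n$ exists. Also, the permutations $\sigma_p$ are compatible with each other (each extends the previous one), so they glue to a bijection $\sigma$. By the first part of the claim, $\sigma$ gives an ice isomorphism $Q_\infty\cong\check{Q}$.

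The main obstacle is the stabilization step. We must show rigorously that later blocks never alter arrows among $P_p\sqcup\overline{P_p}$, including arrows between $P_p$ and the frozen vertices. The difficulty is that after block $p$ the arrows from $P_p$ into later blocks have been altered. To handle this, I would first verify that the triangular direction (arrows from later blocks into $P_p$ only, in $R_p$, up to $\sigma_p$) persists. This needs the green-to-red analysis for the finite truncation, and I would reduce it to \cite[Corollary 3.3]{CL} on $Q^{P_{p'}}$ with its framing.
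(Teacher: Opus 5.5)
There is a genuine gap, and it sits at the step you yourself call the main obstacle: you have the orientation of the inter-block arrows backwards. Your first two bullets, the second sentence of the third bullet, and the use of Cao--Li on the finite quiver $Q^{P_p}$ are fine.

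The definition forbids arrows from $Q_q$ to $Q_p$ for $q<p$. So in $\hat Q$ every arrow between $P_p$ and its complement points \emph{into} $P_p$, the same direction as the framing arrows $\bar i\to i$. Your phrases ``no arrows from later blocks to earlier ones'' and ``all arrows go from $P_p$ outward'' assert the opposite. With that orientation the statement is actually false: for $1\to 2$ with $Q_1=\{1\}$, $Q_2=\{2\}$, mutating along $(1;2)$ creates $\bar 2\to 1$, so vertex $1$ ends up green.

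Your third bullet is also false. After the maximal green sequence on $P_p$, the inter-block arrows are reversed, not preserved. In the paper's alternating $\mathbb{A}_\infty$ example with $\mathbf{k}=(2,1;4,3;\dots)$, the quiver $R_1$ contains $2\to 3$, although $Q$ contains $3\to 2$ and $\sigma_1=\mathrm{id}$. The arrow $3\to 2$ reappears only after block $2$ has been mutated.

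The right mechanism is your extra-frozen trick, but with the correct sign.
\begin{itemize}
\item For $j\notin P_p$, the row of $j$ in the exchange matrix, restricted to the columns in $P_p$, is initially $\sum_i b_{ji}\,(\text{row of }\bar i)$ with all $b_{ji}\ge 0$.
\item The $c$-vector of each mutated vertex $k$ is sign-coherent. Matrix mutation at $k$ is linear on each half-space $\{r_k\ge 0\}$ and $\{r_k\le 0\}$, hence linear on the cone spanned by the frozen rows. So this identity survives every step of $\mathbf{k}_1\ast\cdots\ast\mathbf{k}_p$.
\item At each step, the identity forbids paths through the mutated vertex joining two vertices of $V(Q)\setminus P_p$, or joining such a vertex to a vertex of $\overline{P_p}$. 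This gives invariance of the complement and keeps it non-adjacent to $\overline{P_p}$.
\item At the end, the row of $\bar i$ has become $-e_{\sigma_p^{-1}(i)}$. The identity then shows that each arrow $j\to i$ of $Q$ has turned into $\sigma_p^{-1}(i)\to j$.
\end{itemize}

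This reversal is the crux. It makes $Q_{p+1}$ a sink block in $R_p$: it receives arrows both from $P_p$ and from the later blocks, and is adjacent to no frozen vertex outside $\bar V(Q_{p+1})$. That is exactly the hypothesis needed to rerun the argument for $\mathbf{k}_{p+1}$. It yields that $P_p\sqcup\overline{P_p}$ is untouched at every later step, which is the content of property (B) in the paper's proof.

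What you propose to verify instead is that in $R_p$ the arrows run from later blocks into $P_p$ and that this persists. That is false. If it held, it would break the invariance: a green vertex of $Q_{p+1}$ with an arrow into $P_p$ would, when mutated, create arrows from $\bar V(Q_{p+1})$ into $P_p$, exactly as in the $1\to 2$ example. So your plan does not establish the stabilization step, and therefore establishes neither the convergence nor the identification $Q_\infty\cong\check Q$.
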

	
	\begin{proof}
		First note that $V_m(\hat{Q}) = V(Q) = \bigsqcup_{p \ge 1} V(Q_p)$ and $V_f(\hat{Q}) = \bar{V}(Q) = \bigsqcup_{p \ge 1} \bar{V}(Q_p)$.  
		
		For each $p \ge 1$, set  
		\begin{equation}
			Q^{(p)} := \mu_{\mathbf{k}_p} \circ \cdots \circ \mu_{\mathbf{k}_1}(\hat{Q}).
		\end{equation}
		We prove by induction on $p$ the following two properties:
		
		\begin{itemize}
			\item[(A)] The full subquiver of $Q^{(p)}$ generated by
			$\bigsqcup_{q \le p} \bigl( V(Q_q) \sqcup \bar{V}(Q_q) \bigr)$ is ice-isomorphic to 
			$\check{Q}_{\le p}$, the coframed quiver of the full subquiver 
			$Q_{\le p} \subset Q$ on $\bigsqcup_{q \le p} V(Q_q)$. The isomorphism is given by permutations 
			$\sigma_1, \dots, \sigma_p$ obtained from the maximal green sequences of $Q_1, \dots, Q_p$.
			
			\item[(B)] $Q^{(p)}$ (viewed as an ordinary quiver) is a triangular extension of the complement of $Q_{p+1}$ by $Q_{p+1}$ (both viewed as a subquiver of $Q^{(p)}$). Moreover, when applying mutation sequence $\mathbf{k}_p$ on $Q^{(p-1)}$, the subquiver of $Q^{(p-1)}$ generated by 
			$V(\hat{Q}) \setminus V(Q_p)$ remains invariant at each step. 
		\end{itemize}
		
		For $p=1$, $\hat{Q}$ is a triangular extension of the complement of $Q_1$ by $Q_1$. Since $\mathbf{k}_1$ is a maximal green sequence of $Q_1$, the full subquiver of $Q^{(1)}$ generated by $V(Q_1)\sqcup \bar{V}(Q_1)$ is ice‑isomorphic to $\check{Q}_1$ via some permutation $\sigma_1$ and thus (A) holds. By Lemma~\ref{lem:equivalent condition on sign-coherence}, no arrows appear from the complement of $V(Q_1)$ to $V(Q_1)$ in $Q^{(1)}=\mu_{\mathbf{k}_1}\hat{Q}$. Therefore $Q^{(1)}$ is a triangular extension of the complement of $Q_2$ by $Q_2$. Again by Lemma~\ref{lem:equivalent condition on sign-coherence}, when applying $\mathbf{k}_1$ on $\hat{Q}$, the subquiver of $\hat{Q}$ generated by $V(\hat{Q})\setminus V(Q_1)$ remains invariant at each step and thus (B) holds.
		
		Assume that properties (A) and (B) hold for $p$; we will prove them for $p+1$. Since $\mathbf{k}_{p+1}$ is a maximal green sequence of $Q_{p+1}$, the full subquiver $Q^{(p+1)}$ generated by $V(Q_{p+1})\sqcup \bar{V}(Q_{p+1})$ is ice‑isomorphic to $\check{Q}_{p+1}$ via some permutation $\sigma_{p+1}$. For the same reason with the case $p=1$, property (B) holds for $p+1$. Now we consider the mutation sequence $\mathbf{k}_1 \ast \mathbf{k}_2 \ast \cdots$ on the framed quiver $\hat{Q}_{\leq p+1}$. Then every vertex in $V_m(\mu_{\mathbf{k}_{p+1}}\circ\cdots\circ\mu_{\mathbf{k}_1} \hat{Q}_{\leq p+1})$ is red and thus $\mathbf{k}_1 \ast \mathbf{k}_2 \ast \cdots$ is a maximal green sequence of $Q_{\leq {p+1}}$. Therefore by \cite[Proposition 2.10]{BDP}, the property (A) holds for $p+1$.
		
		The following structural property of the mutated quivers will be established.
		
		\begin{corollary}\label{cor:no arrows between blocks}
			Under the assumptions of Theorem~\ref{thm:maximal green sequence and triangular extension}, for any two distinct positive integers $p$ and $q$, at every step of the mutation sequence $\mathbf{k}$ applied to the framed quiver $\hat{Q}$, the resulting quiver contains no arrow between any vertex of $V(Q_p)$ and any vertex of $\bar{V}(Q_q)$.
		\end{corollary}
		
		\begin{proof}
			The statement holds trivially for the initial quiver $\hat{Q}$ because by construction there are no arrows between $V(Q_p)$ and $\bar{V}(Q_q)$ for $p\neq q$. Now consider an arbitrary step of the mutation sequence $\mathbf{k}$. When we mutate at a vertex belonging to some $V(Q_p)$, two observations are crucial:
			\begin{itemize}
				\item By Lemma~\ref{lem:equivalent condition on sign-coherence}, no path of the form $i \to k \to j$ can appear with $i,k\in V(Q_p)$ and $j\in V(Q_q)$ for $q\neq p$ (or symmetrically with $k,j\in V(Q_p)$ and $i\in V(Q_q)$). Hence mutations inside $Q_p$ never create an arrow between $V(Q_p)$ and $\bar{V}(Q_q)$.
				\item Property (B) of the induction ensures that the full subquiver on the complement of $Q_p$ (i.e. all vertices not in $V(Q_p)$) stays invariant throughout the mutations within $Q_p$.
			\end{itemize}
			Combining these facts with the mutation rule for quivers and an induction on the number of steps, we conclude that no arrow between $V(Q_p)$ and $\bar{V}(Q_q)$ ever appears for $p\neq q$ at any stage of $\mathbf{k}$. 
		\end{proof}
		
		Now we return to the proof of Theorem~\ref{thm:maximal green sequence and triangular extension}.
		
		Clearly, since each $\mathbf{k}_p$ is a maximal green sequence of $Q_p$, it follows from properties (A) and (B) that $\mathbf{k}$ is a green sequence of $Q$. Consequently, one readily checks from Definition~\ref{def:convergence of quiver} and Definition~\ref{def:mgs-inf-rank} that the sequence of quivers obtained from $\hat{Q}$ by mutating along $\mathbf{k}$ converges, and the limit is ice‑isomorphic to $\check{Q}$ via the global permutation obtained by combining all $\sigma_p$.
		
		We conclude that $\mathbf{k}$ is a maximal green sequence of $Q$.
	\end{proof}
	
	As an application of Theorem~\ref{thm:maximal green sequence and triangular extension}, we now construct two explicit maximal green sequences for the alternating $\mathbb{A}_{\infty}$ quiver.
	
	\begin{example}
		Consider the framed quiver of the alternating $\mathbb{A}_{\infty}$ quiver:
		\begin{equation}
			\begin{tikzcd}
				\bar{1} & \bar{2} & \bar{3} & \bar{4} & \bar{5} & \bar{6} & \\
				1 & 2 & 3 & 4 & 5 & 6 & \cdots
				\arrow[from=1-1, to=2-1]
				\arrow[from=1-2, to=2-2]
				\arrow[from=1-3, to=2-3]
				\arrow[from=1-4, to=2-4]
				\arrow[from=1-5, to=2-5]
				\arrow[from=1-6, to=2-6]
				\arrow[from=2-1, to=2-2]
				\arrow[from=2-3, to=2-2]
				\arrow[from=2-3, to=2-4]
				\arrow[from=2-5, to=2-4]
				\arrow[from=2-5, to=2-6]
				\arrow[from=2-7, to=2-6]
			\end{tikzcd}
		\end{equation}
		This quiver is a triangular extension of the family $\{Q_p\}_{p=1}^\infty$, where each $Q_p$ is isomorphic to the $\mathbb{A}_2$ quiver $1 \rightarrow 2$. For $\mathbb{A}_2$ it is well known that $(2,1)$ is a maximal green sequence; the corresponding permutation $\sigma_p$ (as in Definition~\ref{def:mgs-inf-rank}) is the identity on $\{1,2\}$. Hence by Theorem~\ref{thm:maximal green sequence and triangular extension}, the sequence
		\[
		\mathbf{k}= (2,1;\; 4,3;\; 6,5;\; \dots)
		\]
		is a maximal green sequence of $\mathbb{A}_{\infty}$. The limiting ice quiver is isomorphic to the coframed quiver $\check{Q}$ via the global permutation $\sigma$ which acts as the identity on every pair $\{2n-1,2n\}$.
		
		A second maximal green sequence for $\mathbb{A}_2$ is given by $(1,2,1)$. In this case the induced permutation $\sigma_p$ on the vertices of $Q_p$ is the transposition $(1,2)$. Applying Theorem~\ref{thm:maximal green sequence and triangular extension} to the infinite sequence
		\[
		\mathbf{k}= (1,2,1;\; 3,4,3;\; 5,6,5;\; \dots)
		\]
		yields a maximal green sequence of $\mathbb{A}_{\infty}$. The resulting limit quiver $Q_\infty$ is
		\begin{equation}
			\begin{tikzcd}
				\bar{1} & \bar{2} & \bar{3} & \bar{4} & \bar{5} & \bar{6} & \\
				2 & 1 & 4 & 3 & 6 & 5 & \cdots
				\arrow[from=2-1, to=1-1]
				\arrow[from=2-1, to=2-2]
				\arrow[from=2-2, to=1-2]
				\arrow[from=2-3, to=1-3]
				\arrow[from=2-3, to=2-2]
				\arrow[from=2-3, to=2-4]
				\arrow[from=2-4, to=1-4]
				\arrow[from=2-5, to=1-5]
				\arrow[from=2-5, to=2-4]
				\arrow[from=2-5, to=2-6]
				\arrow[from=2-6, to=1-6]
				\arrow[from=2-7, to=2-6]
			\end{tikzcd}
		\end{equation}
		and it is ice‑isomorphic to $\check{Q}$ via the global permutation $\sigma$ that swaps $2n-1$ and $2n$ for every $n\ge 1$; that is, $\sigma = (1,2)(3,4)(5,6)\cdots$.
	\end{example}
	
	In the next section, we will prove the converse of Theorem~\ref{thm:maximal green sequence and triangular extension} using categorification methods.

	\subsection{Categorification of maximal green sequences of infinite length}
	
	Maximal green sequences are intimately related to the representation theory of quivers. In the finite setting, they correspond to sequences of mutations of $\tau$-tilting objects and to chains of torsion classes in the module category. In this section we aim to categorify the infinite length maximal green sequences constructed above, by establishing analogous connections for strongly almost finite quivers.
	
	Suppose $Q$ is a strongly almost finite quiver and $KQ$ is its path algebra over an algebraically closed field $K$. For a vertex $a\in V(Q)$, let $e_a$ be the corresponding primitive idempotent. Define the following right $KQ$-modules:
	\begin{equation}
		P(a):=e_a KQ, \qquad I(a):=D(KQ e_a), \quad \text{where } D(-)=\hom_{K}(-,K).
	\end{equation}
	Let $\proj KQ$ (respectively, $\inj KQ$) be the category of finite direct sums of modules of the form $P(a)$ (respectively, $I(a)$). Let $S(a):=P(a)/\rad(P(a))$.
	
	A $KQ$-module $M$ is called \textbf{finitely presented} if there exist $P_1, P_0\in\proj KQ$ such that the following sequence is exact:
	\begin{equation}
		P_1 \longrightarrow P_0 \longrightarrow M \longrightarrow 0.
	\end{equation}
	Dually, $M$ is called \textbf{finitely co-presented} if there exist $I_0, I_1\in\inj KQ$ such that the following sequence is exact:
	\begin{equation}
		0 \longrightarrow M \longrightarrow I_0 \longrightarrow I_1.
	\end{equation}
	
	According to \cite[Proposition 1.15]{BLP}, for a $KQ$-module $M$, since $Q$ does not have infinite paths, the following conditions are equivalent:
	\begin{enumerate}
		\item[(i)] $M$ is finitely presented;
		\item[(ii)] $M$ is finite-dimensional (as a K-vector space);
		\item[(iii)] $M$ is finitely co-presented;
		\item[(iv)] $M$ has finite length.
	\end{enumerate}
	\begin{remark}
		If $Q$ is a finite quiver, the equivalence above is well-known and each condition is also equivalent to $M$ being finitely generated.
	\end{remark}
	
	Therefore, we denote by $\mod KQ$ the category of finitely presented right $KQ$ modules. It is well-known that $\mod KQ$ is a hereditary abelian category. Furthermore, as shown in \cite[Proposition 1.16]{BLP}, the modules $P(a)$ are exactly the indecomposable projective objects in $\mod KQ$, the modules $I(a)$ are exactly the indecomposable injective objects in $\mod KQ$, and the modules $S(a)$ are exactly the simple objects in $\mod KQ$.
	
	Suppose $Q$ is a strongly almost finite quiver and its vertex set is denoted by $\left\{1,2,\cdots\right\}$. For each $m\in \mathbb{Z}_+$, we denote by $Q^{[m]}$ the full subquiver of $Q$ which is generated by the vertices $\{1,2,\cdots, m\}$. Therefore, we have an exact inclusion functor $F_i^j:\mod KQ^{[i]}\rightarrow \mod KQ^{[j]}$ for each $i\leq j$ satisfying 
	\begin{enumerate}
		\item[(a)] $F_i^i = \operatorname{id}_{\mod KQ^{[i]}}$;
		\item[(b)] $F_j^k \circ F_i^j = F_i^k$ for $i \le j \le k$.
	\end{enumerate}
	Therefore, by \cite[Lemma 4.2, Remark 4.3]{HL}, the \textbf{colimit} of $(\mod KQ^{[i]}, F_i^j)$ exists and is isomorphic to $\mod KQ$ as an abelian category. We denote it by $\varinjlim \mod KQ^{[m]} = \mod KQ$. More precisely,
	\begin{enumerate}
		\item[(1)] There exist exact functors (indeed, the inclusion functors) $F_i:\mod KQ^{[i]}\rightarrow \mod KQ$ for all $i\in \mathbb{Z}_+$ and natural equivalence $\eta_i^j:F_i\rightarrow F_j\circ F_i^j$ for all $i\leq j$ satisfying $\eta_i^i=\operatorname{id}_{F_i}$, and the following diagrams
		\begin{equation}
			\begin{tikzcd}
				{F_i} & {F_j\circ F_i^j} \\
				{F_i} & {F_k\circ F_i^k}
				\arrow["{\eta_i^j}", from=1-1, to=1-2]
				\arrow["{\operatorname{id}_{F_i}}"', from=1-1, to=2-1]
				\arrow["{\eta_j^k*\operatorname{id}_{F_i^j}}", from=1-2, to=2-2]
				\arrow["{\eta_i^k}"', from=2-1, to=2-2]
			\end{tikzcd}
		\end{equation}
		commute for any $i\leq j\leq k$ where ``$*$'' means the horizontal composition of functors;
		
		\item[(2)] For any abelian category $\cA$ with exact functors $G_i: \mod KQ^{[i]}\rightarrow \cA$ and natural equivalence $\xi_i^j:G_i\rightarrow G_j\circ F_i^j$ for all $i\leq j$ satisfying $\xi_i^i=\operatorname{id}_{G_i}$ and the following commutative diagrams
		\begin{equation}
			\begin{tikzcd}
				{G_i} & {G_j\circ F_i^j} \\
				{G_i} & {G_k\circ F_i^k}
				\arrow["{\xi_i^j}", from=1-1, to=1-2]
				\arrow["{\operatorname{id}_{G_i}}"', from=1-1, to=2-1]
				\arrow["{\xi_j^k*\operatorname{id}_{F_i^j}}", from=1-2, to=2-2]
				\arrow["{\xi_i^k}"', from=2-1, to=2-2]
			\end{tikzcd}
		\end{equation}
		for all $i\leq j\leq k$, there exists a unique (up to natural equivalence) exact functor $H:\mod KQ\rightarrow \cA$ such that $G_i=H\circ F_i$ for all $i\in \mathbb{Z}_+$ and $\xi_i^j=\operatorname{id}_H * \eta_i^j$ for all $i\leq j$.
	\end{enumerate}
	
	Now we give the main theorem of this section, which is a categorification of maximal green sequences of cluster algebras of infinite rank.
	
	\begin{theorem}\label{thm:categorification of mgs}
		Let $Q$ be a strongly almost finite quiver, and let $\mathbf{k}=(k_1,k_2,\dots)\subset V(Q)$ be a maximal green sequence of $Q$ (see Definition~\ref{def:mgs-inf-rank}) which satisfies the following two conditions:
		\begin{enumerate}
			\item The permutation $\sigma$ appearing in Definition~\ref{def:mgs-inf-rank} has no infinite cycle.
			\item For any $m\in \mathbb{Z}_+$, there exists $N\in \mathbb{Z}_+$ such that for all $n>N$, the quiver $\mu_{k_n}\circ\cdots\circ\mu_{k_1}(\hat{Q})$ contains no arrow between $[\bar{m}]$ and $[m]^c$.
		\end{enumerate}
		Then there exists a unique maximal green sequence
		\begin{equation}
			\{\mathcal{T}_n \mid n\in \mathbb{Z}_{\ge 0}\cup\{\infty\}\}
		\end{equation}
		of torsion classes in $\mod KQ$ with the following properties:
		\begin{enumerate}
			\item[(a)] For each $1\leq n< \infty$, there exists a unique rigid brick $B_n$ in $\mod KQ$ such that:
			\begin{equation}
				\mathcal{T}_{n-1}^{\perp}\cap \mathcal{T}_n = \operatorname{add} B_n,
			\end{equation}
			
			\item[(b)] The dimension vector of $B_n$ equals the $\mathbf{c}$-vector on the vertex $k_n$ in the mutated quiver $\mu_{k_{n-1}}\circ\cdots\circ\mu_{k_1}(\hat{Q})$.
		\end{enumerate}
	\end{theorem}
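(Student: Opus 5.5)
The plan is to build the chain of torsion classes from the $\mathbf{c}$-vectors step by step, using finite-rank categorification on the truncations $Q^{[m]}$. Then I pass to the colimit $\mod KQ=\varinjlim\mod KQ^{[m]}$ and use Proposition \ref{prop:equivalence condition for mgs} to check maximality.

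\emph{Finite stage.} Fix $n$. The first $n$ mutations only involve finitely many vertices. Choose $m$ so large that all of $k_1,\dots,k_n$ lie in $[m]$ and, by local finiteness, the relevant arrows are already seen in $\hat{Q}^{[m]}$; this is the approximation used in Proposition \ref{prop:sign-coherence of strongly almost finite quiver}. For the finite acyclic quiver $Q^{[m]}$, the green sequence $(k_1,\dots,k_n)$ corresponds, by the classical theory (support $\tau$-tilting mutation via Theorem \ref{thm:mutation of torsion pair} and the brick labelling of \cite{DIJ}), to a chain of functorially finite torsion classes
\[
0=\cT^{[m]}_0\subsetneq\cT^{[m]}_1\subsetneq\cdots\subsetneq\cT^{[m]}_n .
\]
Each step is a cover labelled by a rigid brick $B_j$ whose dimension vector is the (positive) $\mathbf{c}$-vector $-c_{k_j}$ up to sign convention. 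Here $\cT^{[m]}_j=\filt\fac(B_1\oplus\cdots\oplus B_j)$ and ${\cT^{[m]}_{j-1}}^{\perp}\cap\cT^{[m]}_j=\add B_j$.

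Because $B_j$ is finite-dimensional with support in $[m]$ and $F_m^{m'}$ is exact and fully faithful, the bricks do not depend on $m$. I then define $\cT_n:=\filt\fac\{F_m B_1,\dots,F_m B_n\}$ in $\mod KQ$. Properties (a) and (b) and rigidity transfer because $\hom$ and $\ext^1$ between modules supported on $[m]$ are computed in $\mod KQ^{[m]}$. The covering relation $\cT_{n-1}\lessdot\cT_n$ must also hold in $\tors(\mod KQ)$, not just in $\tors KQ^{[m]}$. To see this, note that an intermediate torsion class would contain some module $X$ with a nonzero map to a module in ${\cT_{n-1}}^\perp\cap\cT_n$. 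Restricting to a large enough $[m']$ containing the support of $X$ gives a contradiction with the finite cover there. Finally set $\cT_\infty:=\mod KQ$.

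\emph{Maximality.} The set $\{\cT_n\}_{n\ge0}\cup\{\cT_\infty\}$ is a complete chain indexed by $\mathbb{Z}_{\ge0}\cup\{\infty\}$. Every $\cT_n$ with $n$ finite and $n\geq 1$ is completely join-irreducible and covers $\cT_{n-1}$, while $\cT_\infty$ is not completely join-irreducible. So by Proposition \ref{prop:equivalence condition for mgs} it only remains to show that $\bigvee_n\cT_n=\mod KQ$, that is, every module lies in some $\cT_n$. It suffices to show that each simple $S(a)$ lies in some $\cT_n$, because $\bigcup_n\cT_n$ is already a torsion class: it is an increasing union of torsion classes and objects have finite length. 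This is the step where conditions (1) and (2) enter, and it is the main obstacle.

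Fix $m\ge a$. By (2), for $n\gg0$ the frozen vertices $[\bar m]$ have no arrows to $[m]^c$. By convergence and the ice isomorphism $\sigma$, the full subquiver on $[\bar m]$ together with its neighbours is eventually constant and looks like $\check{Q}$. By (1), the cycle of $\sigma$ through any vertex is finite, so I can enlarge $m$ to a $\sigma$-stable set. Then the restriction of $\mu_{k_n}\cdots\mu_{k_1}(\hat Q)$ to $[m]\sqcup[\bar m]$ is exactly the mutation of $\hat{Q}^{[m]}$ along the subsequence of $\mathbf{k}$ lying in $[m]$, and this is all red. I expect this decoupling claim to be the delicate point. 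I would prove it by induction on $n$: combine (2) with the mutation rule for $\mathbf{c}$-vectors, and check that mutations at vertices outside $[m]$ do not alter the $\mathbf{c}$-vectors relevant to $[m]$ once the frozen arrows have separated.

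Granting this, that subsequence is a finite maximal green sequence of $Q^{[m]}$. Its chain ends at $\mod KQ^{[m]}\ni S(a)$, and the corresponding $\cT_n$ contains $S(a)$.

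\emph{Uniqueness.} Uniqueness follows by induction. The condition ${\cT_{n-1}}^\perp\cap\cT_n=\add B_n$ forces $\cT_n=\filt(\cT_{n-1}\cup\fac B_n)$, and $B_n$ is pinned down by its dimension vector, since a rigid brick is determined by its dimension vector for hereditary $KQ$ restricted to a finite $Q^{[m]}$.
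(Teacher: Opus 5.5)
\textbf{The gap: your decoupling claim.} The problem is in the exhaustion step. You claim that the restriction of $\mu_{k_n}\cdots\mu_{k_1}(\hat Q)$ to $[m]\sqcup[\bar m]$ equals the mutation of $\hat Q^{[m]}$ along the subsequence of $\mathbf{k}$ lying in $[m]$. Your sketch controls mutations at vertices outside $[m]$ only \emph{after} the separation time given by (2). Before that time, such mutations do change the $[m]\sqcup[\bar m]$ data: both the $[\bar m]$-coordinates of $\mathbf{c}$-vectors of vertices in $[m]$, and arrows inside $[m]$ via paths $a\to j\to b$. Nothing you invoke controls them.

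Concretely, let $Q$ have arrows $1\to 3\to 2$ and isolated vertices $4,5,\dots$, and take $\mathbf{k}=(1,3,1,2,3,1,4,5,\dots)$.
\begin{itemize}
\item The $\mathbf{c}$-vectors at the mutated vertices are $e_1,\ e_1+e_3,\ e_3,\ e_1+e_2+e_3,\ e_2+e_3,\ e_2,\ e_4,\ e_5,\dots$.
\item The limit is ice-isomorphic to $\check Q$ via $\sigma=(1\,2)$, and $[2]$ is $\sigma$-stable.
\item From step $6$ on, no arrow joins $\{\bar 1,\bar 2\}$ to $\{3,4,\dots\}$. The example fails (2) at $m=1$, but you only apply (2) at the $\sigma$-stable $m$.
\item The subsequence of $\mathbf{k}$ in $[2]$ is $(1,1,2,1)$, which is not even green in $\hat Q^{[2]}$.
\item $\mu_1\mu_2\mu_1\mu_1(\hat Q^{[2]})$ has frozen arrows $1\to\bar 1$, $2\to\bar 2$, while the true restriction has $1\to\bar 2$, $2\to\bar 1$.
\end{itemize}
So the reduction to a finite maximal green sequence of $Q^{[m]}$ is not available from your premises.

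Even granting it, your last inference is unjustified. That finite sequence is labelled by bricks whose dimension vectors are the \emph{truncated} $\mathbf{c}$-vectors, not by the $B_n$, whose supports can leave $[m]$. So concluding $S(a)\in\cT_n$ still requires showing $\mod KQ^{[m]}\subseteq\cT_n$, which is exactly what you are trying to prove.

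\textbf{The missing idea: read everything off at one step.} Instead of comparing with a finite sequence, the paper works at a single step.
\begin{itemize}
\item It takes $N$ from (2) for a $\sigma$-stable $[m]$ containing the support of $P(a)$.
\item It truncates to a finite $Q^{[m']}$ containing the supports of $B_1,\dots,B_N$. Then $\cT_N$ is functorially finite, with support $\tau$-tilting pair $(M_N,P_N)$.
\item Convergence and (2) make the $\mathbf{c}$-matrix block triangular: the permutation block $-P^{-1}$ on $[\bar m]\times[m]$, and zeros on $[\bar m]\times([m']\setminus[m])$.
\item Tropical duality $G_N=-(C_N^{-1})^{T}$ then shows that the $\mathbf{g}$-vectors at vertices $1,\dots,m$ are exactly the $e_b$ with $b\le m$. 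Hence each such $P(b)$ is a summand of $M_N$ and lies in $\fac M_N=\cT_N$.
\end{itemize}
Only the data at step $N$ enter, so early interactions with vertices outside $[m]$ do no harm.

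The rest of your outline is sound and parallels Steps 1--3 of the paper: the finite stage, the transfer to $\mod KQ$, the covers, the reduction to $\bigcup_n\cT_n=\mod KQ$, and uniqueness.
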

	
	\begin{proof}
		We divide the proof into four steps.
		
		\textbf{Step 1. Restriction to finite subquivers.}
		
		For each $m\ge1$, let $Q^{[m]}$ be the full subquiver of $Q$ on vertices $\{1,\dots,m\}$; it is a finite quiver. Let $\hat{Q}^{[m]\sqcup[\bar{m}]}$ be the full subquiver of the framed quiver $\hat{Q}$ on the vertex set $[m]\sqcup[\bar{m}]$. Fix an integer $n_0\ge1$ and consider the truncated mutation sequence $\mathbf{k}|_{n_0}=(k_1,\dots,k_{n_0})$. 
		Since $Q$ is locally finite, the mutations in $\mathbf{k}|_{n_0}$ affect only finitely many arrows. Hence we can choose $m$ large enough so that
		\begin{equation}
			\bigl(\mu_{k_{n_0}}\circ\cdots\circ\mu_{k_1}(\hat{Q})\bigr)^{[m]\sqcup[\bar{m}]}
			= \mu_{k_{n_0}}\circ\cdots\circ\mu_{k_1}\bigl(\hat{Q}^{[m]\sqcup[\bar{m}]}\bigr).
		\end{equation}
		In other words, mutating the whole infinite quiver $\hat{Q}$ and then restricting to the vertex set $[m]\sqcup[\bar{m}]$ coincides with first restricting to the same vertex set and then performing the same mutations.
		
		\textbf{Step 2. Construction of the torsion classes on the finite piece.}
		
		Apply the finite‑rank correspondence \cite[Theorem 5.1]{DK} to the quiver $Q^{[m]}$. We obtain an ascending chain of torsion classes in $\mod KQ^{[m]}$,
		\begin{equation}
			0=\cT_0\subset \cT_1\subset\cdots\subset \cT_{n_0},
		\end{equation}
		such that for each $1\le n\le n_0$,
		\begin{equation}\label{eq:brick-equation}
			\cT_{n-1}^{\perp}\cap \cT_n = \add B_n^{(m)},
		\end{equation}
		where each $B_n^{(m)}$ is a rigid brick in $\mod KQ^{[m]}$. 
		Moreover, the dimension vector of $B_n^{(m)}$ coincides with the $\mathbf{c}$-vector on the vertex $k_n$ in the quiver $\mu_{k_{n-1}}\circ\cdots\circ\mu_{k_1}(\hat{Q})$.
		
		Let $F_m:\mod KQ^{[m]}\to\mod KQ$ be the inclusion functor. 
		Since $F_m$ is exact, each $F_m(\mathcal{T}_n)$ is still a torsion class in $\operatorname{mod} KQ$. By construction, we have $F_m(\mathcal{T}_n) = \filt \{B_t^{(m)} \mid 1\le t\le n\}$ for $1\le n\le n_0$ (see Theorem~\ref{thm:correspondence of three sets}).
		
		\textbf{Step 3. Passing to the infinite sequence.}
		
		Note that for fixed $n$, the bricks $B^{(m)}_n$ for different $m$ are isomorphic in $\mod  KQ$ when $m$ is sufficiently large, and we will simply denote any of them by $B_n$. Similarly, the torsion classes $F_m(\cT_n)$ for different $m$ are equivalent when $m$ is sufficiently large; for simplicity, we still denote them by $\cT_n$. Letting $n_0$ (in step 2) run through all positive integers, we obtain an infinite ascending chain of torsion classes
		\begin{equation}
			\{\cT_n\mid n\in\mathbb{Z}_+\}
		\end{equation}
		in $\mod KQ$ that satisfies \eqref{eq:brick-equation} for every $n\ge1$.
		
		\textbf{Step 4. Proving $\bigvee \{\cT_n\mid n\in \mathbb{Z}_+ \} = \mod KQ$.}
		
		Set $Q_n := \mu_{k_n}\circ\cdots\circ\mu_{k_1}(\hat{Q})$. By Definition~\ref{def:mgs-inf-rank} the limit $Q_\infty := \lim\limits_{n\to\infty}Q_n$ exists and is ice‑isomorphic to $\check{Q}$ via a permutation $\sigma:\mathbb{Z}_+\to\mathbb{Z}_+$ which permutes all the unfrozen vertices.
		
		Fix an arbitrary indecomposable projective module $P(a)$ (corresponding to vertex $t$ of $Q$). 
		Since $P(a)$ is finitely supported and $\sigma$ has no infinite cycle, we can choose a finite set $\{1,\dots,m\}$ that contains the support of $P(a)$ and is a union of (finite) orbits of $\sigma$.
		
		By our assumption (2), there exists $N\in \mathbb{Z}_+$ such that for all $n>N$, $(Q_n)^{[m]\sqcup[\bar{m}]}=(Q_\infty)^{[m]\sqcup[\bar{m}]}$ holds and $Q_n$ contains no arrow between $[\bar{m}]$ and $[m]^c$. Consequently, for $n\ge N$ there is no arrow between any frozen vertex $\bar{i}$ with $i\le m$ and any unfrozen vertex $j$ with $j>m$.
		
		Now consider the infinite quiver $Q_N = \mu_{k_N}\circ\cdots\circ\mu_{k_1}(\hat{Q})$. Using the local finiteness of $Q$, we can choose $m'\ge m$ such that
		\begin{equation}
			\mu_{k_N}\circ\cdots\circ\mu_{k_1}(Q^{[m]}) = (Q_N)^{[m]},
		\end{equation}
		and such that for every $1\le n\le N$, no arrow exists between the unfrozen vertex $k_n$ and any frozen vertex $\bar{j}$ with $j>m'$ (this is possible because the mutation sequence is finite and only finitely many arrows are created).
		
		It follows that the support of each brick $B_n$ ($1\le n\le N$) is contained in $\{1,\dots,m'\}$. Hence each torsion class $\cT_n = \filt\{B_1,\dots,B_n\}$ is contained in $\mod KQ^{[m']}$. Thus $\{\cT_n\mid 1\le n\le N\}$ is a finite green sequence in $\mod KQ^{[m']}$, and each $\cT_n$ is functorially finite. Let $(M_N,P_N)$ be the support $\tau$-tilting pair corresponding to $\cT_N$; i.e., $M_N$ is the direct sum of all indecomposable $\ext$-projective objects in $\cT_N$, and $P_N$ is the unique basic projective object in $\mod KQ^{[m']}$ with $\hom(P_N,M_N)=0$.
		
		Now we analyze the $\mathbf{c}$-matrix of the pair $(M_N,P_N)$ using the quiver $Q_N$. Because $Q_N$ has no arrows between $[\bar{m}]$ and $[m]^c$. The $\mathbf{c}$-matrix $C_N$ of the pair $(M_N, P_N)$ has a block‑triangular form
		\begin{equation}
			C_N = \begin{pmatrix}
				-P^{-1} & 0 \\
				\ast   & \ast
			\end{pmatrix},
		\end{equation}
		where $P$ is the $m\times m$ permutation matrix induced by the restriction of $\sigma$ to $\{1,\dots,m\}$.
		By the tropical duality between $\mathbf{c}$-matrices and $\mathbf{g}$-matrices (see e.g. \cite{Fu}; note that our definition differs from that in \cite{Fu} by a sign, which accounts for the negative sign), the $\mathbf{g}$-matrix $G_N = -(C_N^{-1})^{T}$ is of the shape
		\begin{equation}
			G_N = \begin{pmatrix}
				P^{T} & \ast \\
				0     & \ast
			\end{pmatrix}.
		\end{equation}
		The first $m$ columns are $m$ $\mathbf{g}$-vectors of the support $\tau$-tilting pair $(M_N, P_N)$ and thus $P(a)$ is a direct summand of $M_N$. Therefore
		\begin{equation}
			P(a) \in \fac(M_N)= \cT_N.
		\end{equation}
		Since $t$ was arbitrary, each indecomposable projective module (and thus each module) belongs to $\bigvee \{\cT_n\mid n\in \mathbb{Z}_+ \}$. Note that each indecomposable object in $\mod KQ$ is a factor of the direct sum of some $P(a)$ and $\cT_n$ is closed under factors, we conclude that 
		\begin{equation}
			\bigvee \{\cT_n\mid n\in \mathbb{Z}_+ \} = \mod KQ
		\end{equation} 
		This completes the proof.
	\end{proof}
	
	\begin{remark}
		(1) We remark that the two conditions in Theorem~\ref{thm:categorification of mgs} are not restrictive. Indeed, by Theorem~\ref{thm:maximal green sequence and triangular extension} together with Corollary~\ref{cor:no arrows between blocks}, one can construct many maximal green sequences that satisfy both conditions.
		
		(2) Unlike the finite case, the difficulty in the proof of Theorem~\ref{thm:categorification of mgs} lies in Step~4; the first three steps are straightforward, involving restriction to finite subquivers and an ascending chain of torsion classes satisfying the brick equation. Step~4 uses this restriction to finite subquivers and the $\mathbf{c}$-matrix to show each indecomposable projective module lies in some $\mathcal{T}_N$---an argument that requires analyzing the limit of quivers, which is unnecessary in the finite case.
	\end{remark}
	
	Now we prove the converse of Theorem~\ref{thm:maximal green sequence and triangular extension} using categorification methods. This approach allows us, under certain conditions, to decompose an infinite-length maximal green sequence into a sequence of finite-length maximal green sequences.
	
	\begin{proposition}\label{prop:decompose-mgs}
		Suppose $Q$ is a strongly almost finite quiver and is a triangular extension of the sequence of finite quivers $\{Q_p\}_{p=1}^\infty$. Suppose
		\begin{equation}
			\mathbf{k}=\mathbf{k}_1 \ast \mathbf{k}_2 \ast \cdots
		\end{equation}
		is a maximal green sequence of $Q$ satisfying conditions (1) and (2) in Theorem~\ref{thm:categorification of mgs}, where $\mathbf{k}_p=(k_{p1},\dots,k_{p t_p})$ is a mutation sequence of $Q_p$ for each $p$. Then each $\mathbf{k}_p$ is a maximal green sequence of $Q_p$.
	\end{proposition}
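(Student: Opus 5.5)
We plan to apply Theorem~\ref{thm:categorification of mgs} to $\mathbf{k}$ to get the chain $\{\cT_n\}$ of torsion classes in $\mod KQ$ with bricks $B_n$, and then use the triangular structure of $Q$ to split $\mod KQ$ block by block. Since there are no arrows from $Q_q$ to $Q_p$ for $q<p$, the subcategories $\mod KQ_p$ (modules supported on $V(Q_p)$) satisfy $\hom(\mod KQ_q,\mod KQ_p)=0$ for $q< p$, up to the orientation convention for right modules, which we fix first. We also need the analogue $\ext^1(\mod KQ_p,\mod KQ_q)=0$ in the appropriate direction. The argument proceeds in the following steps.

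\textbf{Step 1 (bricks live in blocks).} By Corollary~\ref{cor:no arrows between blocks}-type reasoning (Lemma~\ref{lem:equivalent condition on sign-coherence} applied to the mutations inside $\mathbf{k}_1,\dots,\mathbf{k}_{p-1}$, which leave the block $Q_{\ge p}$ together with its frozen vertices invariant), the $\mathbf{c}$-vector at a vertex $k_{p,j}$ at the moment it is mutated is supported on $V(Q_p)$. It agrees with the $\mathbf{c}$-vector obtained by applying $(k_{p1},\dots,k_{p,j-1})$ to $\hat Q_p$. By (b) of Theorem~\ref{thm:categorification of mgs}, the brick $B_{p,j}$ lies in $\mod KQ_p$. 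Moreover, greenness of $\mathbf{k}$ restricted to the frozen vertices $\bar V(Q_p)$ shows that $\mathbf{k}_p$ is a green sequence of $Q_p$.

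\textbf{Step 2 (counting bricks per block).} The backward $\hom$-orthogonal sequence $(B_n)$ is complete by Theorem~\ref{thm:correspondence of three sets}, since $\{\cT_n\}$ is a maximal green sequence. We claim the subsequence $(B_{p,1},\dots,B_{p,t_p})$ is a CBHO sequence in $\mod KQ_p$. Suppose a brick $X\in\mod KQ_p$ could be inserted into it. We will try to insert $X$ into the whole sequence at the corresponding position between blocks. Bricks from blocks $q\neq p$ occur entirely before or entirely after block $p$, and orthogonality with them follows from the vanishing $\hom$ between blocks in the correct direction, which matches the order $\mathbf{k}_1,\mathbf{k}_2,\dots$. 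The insertion would contradict completeness of $(B_n)$. Then the finite-case correspondence (Igusa \cite{Ig2}, or Theorem~\ref{thm:correspondence of three sets} for $\mod KQ_p$) together with \cite[Theorem 5.1]{DK} shows that the chain $\filt\{B_{p,1},\dots,B_{p,j}\}$ is a maximal green sequence of torsion classes in $\mod KQ_p$. Hence the mutation sequence $\mathbf{k}_p$, whose $\mathbf{c}$-vectors are the dimension vectors of these bricks, reaches all-red, i.e.\ it is a maximal green sequence of $Q_p$.

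\textbf{Main obstacle.} The delicate point is Step~2's direction check: the blocks' $\hom$-vanishing must be compatible with the backward orthogonality order. Concretely, a later block $q>p$ must satisfy $\hom(B_{q,\ast},X)=0$, and we must verify that $\hom(X,B_{q'})$ imposes no condition for $q'<p$. We will first verify this on a two-block example (e.g.\ the alternating $\mathbb A_\infty$) to fix conventions. If it fails, the alternative is to argue directly with $\mathbf{c}$-vectors: by condition (2) and the limit being $\check Q$, eventually every vertex of $Q_p$ is red with respect to $\bar V(Q_p)$. Since only mutations in $\mathbf{k}_p$ change those arrows, $\mu_{\mathbf{k}_p}(\hat Q_p)$ is all red.
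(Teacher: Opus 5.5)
Your proposal is correct, but it follows a different route from the paper.

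\textbf{How the two routes differ.} The paper stays on the torsion-class side. It sets $\cT_n'=\cT_n\cap\mod KQ_p$ and shows that the jumps $\cT_{n-1}'\subsetneq\cT_n'$ occur exactly when $B_n\in\mod KQ_p$ and are covers. It gets finite length from Proposition~\ref{prop:finitely generated torsion class}, and then appeals to \cite{DK}, asserting only ``by construction'' that the resulting chain is the one attached to $\mathbf{k}_p$.

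You work on the brick side. Your Step~1 uses Lemma~\ref{lem:equivalent condition on sign-coherence} applied to $V(Q_{<p})$, whose complement in $\hat Q$ only sends arrows into it, together with the easy induction that $V(Q_p)$ never becomes adjacent to $\bar V(Q_q)$ for $q\neq p$. This makes explicit which $B_n$ lie in $\mod KQ_p$, a point the paper leaves implicit. Cite the lemma directly, not Corollary~\ref{cor:no arrows between blocks}, whose hypotheses already assume every $\mathbf{k}_p$ is maximal green. Completeness of the block-$p$ subsequence then follows from completeness of $(B_n)$ by insertion, and finiteness is automatic, since there are $t_p$ bricks.

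\textbf{What your route buys.} You do not need Proposition~\ref{prop:finitely generated torsion class}. You also bypass the paper's covering step. There the displayed $\cT'\vee\cT_{n-1}\subsetneq\cT_n$ cannot be strict, since $\cT_{n-1}\subsetneq\cT_n$ is a cover; the contradiction really comes from $\mod KQ_p$ being a Serre subcategory.

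\textbf{Your ``main obstacle'' is not one.} Objects of $\mod KQ_p$ and $\mod KQ_q$ for $p\neq q$ have disjoint supports. Morphisms of representations are vertexwise, so $\hom$ vanishes in both directions, and $\ext^1$ plays no role in backward orthogonality. Incidentally, with $\cT_n=\filt(B_1,\dots,B_n)$ the condition is $\hom(B_i,B_j)=0$ for $i<j$. So for a later block you need $\hom(X,B_{q,\ast})=0$, not $\hom(B_{q,\ast},X)=0$; both hold anyway.

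\textbf{A gloss you share with the paper.} You still need to identify the \cite{DK} chain of the green sequence $\mathbf{k}_p$ with $\filt\{B_{p,1},\dots,B_{p,j}\}$. This requires, for instance, that rigid bricks over the hereditary algebra $KQ_p$ are determined by their dimension vectors.

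\textbf{Your fallback is a complete and more elementary proof.} By your Step~1, right after $\mathbf{k}_p$ the full subquiver on $V(Q_p)\sqcup\bar V(Q_p)$ is $\mu_{\mathbf{k}_p}(\hat Q_p)$. Its arrows between $V(Q_p)$ and $\bar V(Q_p)$ never change afterwards. Indeed, the same lemma applied to $V(Q_{<q})$ shows that no vertex of $Q_q$ with $q>p$ is ever adjacent to $\bar V(Q_p)$. Now take a finite window containing $V(Q_p)\sqcup\bar V(Q_p)$. Convergence to $Q_\infty\cong\check Q$, an ice isomorphism fixing frozen vertices, forces these arrows to have no arrow $\bar V(Q_p)\to V(Q_p)$. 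Since $c$-vectors of $\mu_{\mathbf{k}_p}(\hat Q_p)$ are nonzero (the $c$-matrix is invertible), every vertex is red.

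This argument uses neither Theorem~\ref{thm:categorification of mgs} nor conditions (1) and (2). You invoked condition (2) but do not need it. So the categorical hypotheses of the proposition are stronger than necessary. Greenness of $\mathbf{k}_p$ still comes from your Step~1, given that $\mathbf{k}$ itself is green, which Definition~\ref{def:mgs-inf-rank} leaves implicit.
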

	
	\begin{proof}
		By our assumption and Theorem~\ref{thm:categorification of mgs}, there exists a unique maximal green sequence
		\begin{equation}
			\{\cT_n \mid n\in \bZ_{\ge 0}\cup\{\infty\}\}
		\end{equation}
		of torsion classes in $\mod KQ$ with the following properties:
		
		(a) For each $1\leq n< \infty$,
		\begin{equation}
			\cT_{n-1}^{\perp}\cap \cT_n = \operatorname{add} B_n,
		\end{equation}
		where $B_n$ is a unique rigid brick in $\mod KQ$.
		
		(b) The dimension vector of $B_n$ equals the $\mathbf{c}$-vector on the vertex $k_n$ in the mutated quiver $\mu_{k_{n-1}}\circ\cdots\circ\mu_{k_1}(\hat{Q})$.
		
		Now we fix $p\in \bZ_+$. Note that $Q_p$ is a full subquiver of $Q$, thus we can view $\mod KQ_p$ as a subcategory of $\mod KQ$ via the inclusion functor. Let $\cT_n'=\cT_n\cap \mod KQ_p$ for each $n\in \bZ_{\ge 0}\cup \{\infty\}$. In particular, $\cT_0'=\{0\}$ and $\cT_{\infty}'=\mod KQ_p$.
		
		We claim that for each $n\in \bZ_{\ge 0}$, $\cT_{n-1}'\subsetneq\cT_n'$ if and only if $B_n\in \mod KQ_p$. Indeed, the ``if'' part is clear. For the ``only if'' part, suppose, if possible, there is a torsion class $\cT'\in\mod KQ_p$ such that $\cT_{n-1}'\subsetneq \cT'\subsetneq \cT_{n}'$, then we have $\cT_{n-1}\subsetneq \cT'\vee \cT_{n-1}\subsetneq \cT_{n}$, which is a contradiction. Therefore, $\cT_{n}'$ covers $\cT_{n-1}'$ in $\mod KQ_p$ and hence $\cT_{n}'=\filt(\cT_{n-1}'\cup \{B_{n}'\})$ for some brick $B_{n}'\in \mod KQ_p$. Thus $\cT_{n-1}\subsetneq \cT_{n-1}\vee\cT_{n}'\subset \cT_n$. Since $\cT_{n-1}\vee\cT_{n}'=\filt(\cT_{n-1}\cup \{B_n'\})$, we have $\cT_n=\filt(\cT_{n-1}\cup \{B_n'\})$. By the uniqueness of $B_n$, the ``only if'' part holds and we have proved the claim.
		
		Therefore, after removing duplicate terms from the chain of torsion classes $\{\cT_{n}'\mid n\in \bZ_{\ge 0}\cup\{\infty\}\}$, we obtain a maximal green sequence $\cS$ in $\mod KQ_p$. By our construction, each $\cT_n'$ is finitely generated in $\mod KQ_p$. Then by Proposition~\ref{prop:finitely generated torsion class}, we know that $\cS$ is of finite length. Therefore, by the categorification of maximal green sequences in the finite quiver case (see \cite[Theorem~5.3]{DK}) --- which gives a bijection between maximal green sequences of a quiver and that of torsion classes, the chain $\{\mathcal{T}_n'\}$ obtained in $\mod KQ_p$ is a maximal chain of torsion classes, hence corresponds uniquely to a maximal green sequence. By construction, this sequence is $\mathbf{k}_p$. Thus $\mathbf{k}_p$ is a maximal green sequence of $Q_p$.
	\end{proof}
	
	As a direct consequence of Theorem~\ref{thm:maximal green sequence and triangular extension} and Proposition~\ref{prop:decompose-mgs}, we obtain the following characterization.
	
	\begin{corollary}\label{cor:triangulation and mgs}
		Let $Q$ be a strongly almost finite quiver which is a triangular extension of the sequence of finite quivers $\{Q_p\}_{p=1}^\infty$. Let $\mathbf{k} = \mathbf{k}_1 \ast \mathbf{k}_2 \ast \cdots$ be a mutation sequence of $Q$ where each $\mathbf{k}_p$ is a sequence of vertices of $Q_p$. Assume that $\mathbf{k}$ satisfies conditions (1) and (2) of Theorem~\ref{thm:categorification of mgs}. Then $\mathbf{k}$ is a maximal green sequence of $Q$ if and only if each $\mathbf{k}_p$ is a maximal green sequence of $Q_p$.
	\end{corollary}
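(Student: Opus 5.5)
The corollary is the conjunction of two earlier results, so the proof splits into the two implications, each handled by one of them.

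\emph{The ``if'' direction.} Assume each $\mathbf{k}_p$ is a maximal green sequence of the finite quiver $Q_p$. Then Theorem~\ref{thm:maximal green sequence and triangular extension} applies directly to $\mathbf{k}=\mathbf{k}_1\ast\mathbf{k}_2\ast\cdots$ and shows that $\mathbf{k}$ is a maximal green sequence of $Q$. This direction does not use conditions (1) and (2) at all.

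\emph{The ``only if'' direction.} Assume $\mathbf{k}$ is a maximal green sequence of $Q$ satisfying conditions (1) and (2) of Theorem~\ref{thm:categorification of mgs}. These are exactly the hypotheses of Proposition~\ref{prop:decompose-mgs}, which then gives that every $\mathbf{k}_p$ is a maximal green sequence of $Q_p$.

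The one point to check is how the block decomposition is read. The statement only asks that each $\mathbf{k}_p$ be a word in the vertices of $Q_p$, and Proposition~\ref{prop:decompose-mgs} uses the same convention. So the blocks match in both directions, provided each block is finite (each $t_p<\infty$). If some $\mathbf{k}_p$ were infinite, the concatenation $\mathbf{k}_1\ast\mathbf{k}_2\ast\cdots$ would not be an $\mathbb{N}$-indexed sequence in the sense of Definition~\ref{def:mgs-inf-rank}. So finiteness of every block is implicit in the hypothesis that $\mathbf{k}$ is a mutation sequence of $Q$ written in this form. In the ``if'' direction it is automatic anyway, since maximal green sequences of finite quivers are finite.

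Because everything reduces to citing the two earlier results, there is no real obstacle. The only step that needs care is the bookkeeping convention above: the concatenation must be indexed by $\mathbb{N}$ with finite blocks, so that both Theorem~\ref{thm:maximal green sequence and triangular extension} and Proposition~\ref{prop:decompose-mgs} apply verbatim.
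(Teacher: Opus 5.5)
Your proposal is correct and matches the paper's argument exactly. The paper states the corollary as a direct consequence of Theorem~\ref{thm:maximal green sequence and triangular extension} for the ``if'' direction, which indeed needs neither condition (1) nor (2), and of Proposition~\ref{prop:decompose-mgs} for the ``only if'' direction; your remark that each block $\mathbf{k}_p=(k_{p1},\dots,k_{pt_p})$ has $t_p<\infty$ is bookkeeping already built into both cited results.
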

	
	Finally, we will give an example of the categorification of maximal green sequence of the cluster algebra of infinite rank.
	
	\begin{example}
		We consider the alternating $\mathbb{A}_{\infty}$ quiver as in the previous example. 
		
		\begin{equation}
			Q=
			\begin{tikzcd}
				1 & 2 & 3 & 4 & 5 & 6 & \cdots
				\arrow[from=1-1, to=1-2]
				\arrow[from=1-3, to=1-2]
				\arrow[from=1-3, to=1-4]
				\arrow[from=1-5, to=1-4]
				\arrow[from=1-5, to=1-6]
				\arrow[from=1-7, to=1-6]
			\end{tikzcd}
		\end{equation}
		
		(1) Take the maximal green sequence 
		\begin{equation}
			\mathbf{k} = (2,1;\; 4,3;\; 6,5;\; \dots)
		\end{equation}
		constructed above. The associated permutation $\sigma$ is the identity, which has no infinite cycles. By Theorem~\ref{thm:categorification of mgs}, there exists a unique chain of torsion classes $\{\cT_n\}_{n\ge0}$ in $\mod KQ$ satisfying properties (a) and (b) of the theorem, with $\cT_0=0$ and $\bigvee_n \cT_n = \mod KQ$. 
		For each $p\ge1$, after mutating the block $Q_p = \{2p-1,2p\}$ we obtain two torsion classes. More explicitly, for the first block the mutations $(2,1)$ yield
		\begin{equation}
			\cT_1 = \filt(S(2)),\quad \cT_2 = \filt(S(2), S(1))=\mod KQ^{[2]},
		\end{equation}
		where $S(i)$ denotes the simple module at vertex $i$. The corresponding rigid bricks are $B_1=S(2)$ and $B_2=S(1)$. Their $\mathbf{c}$-vectors are respectively $e_2$ (i.e. the vector with $1$ in position $2$ and $0$ elsewhere) and $e_1$. After the second block we get
		\begin{equation}
			\cT_3 = \cT_2 \vee \filt(S(4)),\quad 
			\cT_4 = \cT_3 \vee \filt(S(4)\oplus S(3))=\mod KQ^{[4]},
		\end{equation}
		and so on. Thus the whole chain consists of the torsion classes obtained by successively adding the simple modules in the order $2,1,4,3,6,5,\dots$, and each $\mathbf{c}$-vector of the brick is the corresponding standard basis vector.
		
		(2) Consider the sequence 
		\[
		\mathbf{k} = (1,2,1;\; 3,4,3;\; 5,6,5;\; \dots).
		\]
		The permutation $\sigma$ is $(1,2)(3,4)\cdots$, whose orbits are all of length $2$, hence no infinite cycles. By Theorem~\ref{thm:categorification of mgs}, we again obtain a unique chain of torsion classes $\{\cT_n\}_{n\ge0}$ in $\mod KQ$ satisfying (a) and (b), with $\cT_0=0$ and $\bigvee_n \cT_n = \mod KQ$. For each block $Q_p$, the three mutations induce three torsion classes. For the first block, they are
		\begin{equation}
			\cT_1 = \filt(S(1)),\quad
			\cT_2 = \filt(S(1), P(1)),\quad
			\cT_3 = \filt(S(1), P(1), S(2))=\mod KQ^{[2]},
		\end{equation}
		where $P(1)$ is the projective indecomposable module at vertex $1$. The corresponding bricks are $S(1)$, $S(2)$ and $P(1)$; note that $P(1)$ is indecomposable and rigid, hence a brick. Their $\mathbf{c}$-vectors are respectively $e_1$, $e_2$, and $e_1+e_2$ (i. e., $(1,1,0,0,\dots)$). The subsequent blocks behave similarly, with analogous bricks and $\mathbf{c}$-vectors, and the whole chain converges to the whole module category.
		
		Thus both sequences satisfy the conditions of Theorem~\ref{thm:categorification of mgs} and provide explicit maximal green sequences of torsion classes for the alternating $\mathbb{A}_{\infty}$ quiver.
	\end{example}

    \vspace{\baselineskip}
    \textbf{Acknowledgments}\quad This work is supported by the National Natural Science Foundation of China (No. 12131015).
	
	\bibliographystyle{plain}
	\bibliography{references}  

@article{AIR,
	title = {{$\tau$}-tilting theory},
	author = {Adachi, T. and Iyama, O. and Reiten, I.},
	journal = {Compositio Mathematica},
	volume = {150},
	number = {3},
	pages = {415--452},
	year = {2014}
}

@article{Br,
	title = {Stability conditions on triangulated categories},
	author = {Bridgeland, T.},
	journal = {Annals of Mathematics},
	pages = {317--345},
	year = {2007}
}

@article{BCZ,
	title = {Minimal inclusions of torsion classes},
	author = {Barnard, E. and Carroll, A. and Zhu, S.},
	journal = {Algebraic Combinatorics},
	volume = {2},
	number = {5},
	pages = {879--901},
	year = {2019}
}

@article{BDP,
	title = {On maximal green sequences},
	author = {Br{\"u}stle, T. and Dupont, G. and P{\'e}rotin, M.},
	journal = {International Mathematics Research Notices},
	volume = {2014},
	number = {16},
	pages = {4547--4586},
	year = {2014}
}

@article{BLP,
	title = {Representation theory of strongly locally finite quivers},
	author = {Bautista, R. and Liu, S. and Paquette, C.},
	journal = {Proceedings of the London Mathematical Society},
	volume = {106},
	number = {1},
	pages = {97--162},
	year = {2013}
}

@article{BST2,
	title = {Stability conditions and maximal green sequences in abelian categories},
	author = {Br{\"u}stle, T. and Smith, D. and Treffinger, H.},
	journal = {Revista de la Uni{\'o}n Matem{\'a}tica Argentina},
	volume = {63},
	number = {1},
	pages = {203--221},
	year = {2022}
}

@article{CL,
	title = {Uniform column sign-coherence and the existence of maximal green sequences},
	author = {Cao, P. and Li, F.},
	journal = {Journal of Algebraic Combinatorics},
	volume = {50},
	number = {4},
	pages = {403--417},
	year = {2019}
}

@article{CLR,
	title = {Stability approach to torsion pairs on abelian categories},
	author = {Chen, M. and Lin, Y. and Ruan, S.},
	journal = {Journal of Algebra},
	volume = {636},
	pages = {560--602},
	year = {2023}
}

@article{DIJ,
	title = {{$\tau$}-tilting finite algebras, bricks, and $g$-vectors},
	author = {Demonet, L. and Iyama, O. and Jasso, G.},
	journal = {International Mathematics Research Notices},
	volume = {2019},
	number = {3},
	pages = {852--892},
	year = {2019}
}

@article{DIRRT,
	title = {Lattice theory of torsion classes: beyond {$\tau$}-tilting theory},
	author = {Demonet, L. and Iyama, O. and Reading, N. and Reiten, I. and Thomas, H.},
	journal = {Transactions of the American Mathematical Society, Series B},
	volume = {10},
	number = {18},
	pages = {542--612},
	year = {2023}
}

@article{DK,
	title = {A survey on maximal green sequences},
	author = {Demonet, L. and Keller, B.},
	journal = {arXiv preprint arXiv:1904.09247},
	year = {2019}
}

@book{DP,
	title = {Introduction to lattices and order},
	author = {Davey, B. A. and Priestley, H. A.},
	publisher = {Cambridge University Press},
	year = {2002}
}

@article{DWZ1,
	title = {Quivers with potentials and their representations {I}: Mutations},
	author = {Derksen, H. and Weyman, J. and Zelevinsky, A.},
	journal = {Selecta Mathematica},
	volume = {14},
	number = {1},
	pages = {59--119},
	year = {2008}
}

@article{En,
	title = {Schur's lemma for exact categories implies abelian},
	author = {Enomoto, H.},
	journal = {Journal of Algebra},
	volume = {584},
	pages = {260--269},
	year = {2021}
}

@article{Fu,
	title = {$c$-vectors via $\tau$-tilting theory},
	author = {Fu, C.},
	journal = {Journal of Algebra},
	volume = {473},
	pages = {194--220},
	year = {2017}
}

@article{FZ1,
	title = {Cluster algebras {I}: foundations},
	author = {Fomin, S. and Zelevinsky, A.},
	journal = {Journal of the American Mathematical Society},
	volume = {15},
	number = {2},
	pages = {497--529},
	year = {2002}
}

@article{FZ2,
	title = {Cluster algebras {II}: Finite type classification},
	author = {Fomin, S. and Zelevinsky, A.},
	journal = {arXiv preprint math/0208229},
	year = {2002}
}

@article{G,
	title = {Cluster algebras of infinite rank as colimits},
	author = {Gratz, S.},
	journal = {Mathematische Zeitschrift},
	volume = {281},
	number = {3},
	pages = {1137--1169},
	year = {2015}
}

@article{GKR,
	title = {$t$-stabilities and $t$-structures on triangulated categories},
	author = {Gorodentsev, A. L. and Kuleshov, S. A. and Rudakov, A. N.},
	journal = {Izvestiya: Mathematics},
	volume = {68},
	number = {4},
	pages = {749--781},
	year = {2004}
}

@article{HL,
	title = {Unfolding of sign-skew-symmetric cluster algebras and its applications to positivity and $F$-polynomials},
	author = {Huang, M. and Li, F.},
	journal = {Advances in Mathematics},
	volume = {340},
	pages = {221--283},
	year = {2018}
}

@book{HL-Chinese,
	title = {An Introduction to the Theory of Cluster Algebras \textnormal{(in Chinese)}},
	author = {Li, F. and Huang, M.},
	series = {the Basic Series of Modern Mathematics},
	number = {199},
	publisher = {Science Press},
	address = {Beijing},
	year = {2023},
	isbn = {978-7-03-074894-2}
}

@article{Ig1,
	title = {Maximal green sequences for cluster-tilted algebras of finite representation type},
	author = {Igusa, K.},
	journal = {Algebraic Combinatorics},
	volume = {2},
	number = {5},
	pages = {753--780},
	year = {2019}
}

@article{Ig2,
	title = {Linearity of stability conditions},
	author = {Igusa, K.},
	journal = {Communications in Algebra},
	volume = {48},
	number = {4},
	pages = {1671--1696},
	year = {2020}
}

@incollection{K,
	title = {On cluster theory and quantum dilogarithm identities},
	author = {Keller, B.},
	booktitle = {Representations of algebras and related topics},
	pages = {85--116},
	publisher = {European Mathematical Society-EMS-Publishing House GmbH},
	year = {2011}
}

@article{Ki,
	title = {Moduli of representations of finite dimensional algebras},
	author = {King, A. D.},
	journal = {The Quarterly Journal of Mathematics},
	volume = {45},
	number = {4},
	pages = {515--530},
	year = {1994}
}

@article{LL,
	title = {On maximal green sequences in abelian length categories},
	author = {Liu, S. and Li, F.},
	journal = {Journal of Algebra},
	volume = {580},
	pages = {399--422},
	year = {2021}
}

@book{Mu,
	title = {Geometric invariant theory},
	author = {Mumford, D. and Fogarty, J. and Kirwan, F.},
	volume = {34},
	publisher = {Springer},
	year = {1994}
}

@article{Ru,
	title = {Stability for an abelian category},
	author = {Rudakov, A.},
	journal = {Journal of Algebra},
	volume = {197},
	number = {1},
	pages = {231--245},
	year = {1997}
}

@article{Tr,
	title = {An algebraic approach to {H}arder–{N}arasimhan filtrations},
	author = {Treffinger, H.},
	journal = {Journal of Pure and Applied Algebra},
	volume = {229},
	number = {1},
	pages = {107817},
	year = {2025}
}
\end{document}